\def\NewTheorem#1#2{%
	\newaliascnt{#1}{thmm}
	\newtheorem{#1}[#1]{#2}
	\aliascntresetthe{#1}
	\expandafter\def\csname #1autorefname\endcsname{#2}
}
\newcommand{\ubar}[1]{\mkern3mu\underline{\mkern-3mu #1\mkern-3mu}\mkern3mu}
\newcommand{\spr}[1] {{\odot_{ #1 }} }
\newcommand{\pr}[1] {{\otimes_{ #1 }} }
\newcommand{\lwrX}{\ubar{\mathbf{X}}}
\newcommand{\lwrY}{\ubar{\mathbf{Y}}}
\newcommand{\uprX}{\bar{\mathbf{X}}}
\numberwithin{equation}{section}
\theoremstyle{definition}
\theoremstyle{remark}
\theoremstyle{definition}
\def\namedlabel#1#2{\begingroup
   \def\@currentlabel{#2}%
   \label{#1}\endgroup
}
\newcommand{\tree}{{\mathbb{T}}}
\providecommand{\keywords}[1]{\textbf{Keywords.} #1}
\providecommand{\amssc}[1]{\textbf{AMS subject classification.} #1}
\title{On Le Jan-Sznitman's stochastic approach to the Navier-Stokes equations}
\author{Radu Dascaliuc\footnote{Department of Mathematics,  Oregon State University, Corvallis, OR, 97331. radu.dascaliuc@oregonstate.edu} \hspace{0.5in} Tuan N.\ Pham\footnote{Department of Mathematics,  Brigham Young University, Provo, UT, 84602. tuan.pham@mathematics.byu.edu} \hspace{0.5in} Enrique Thomann\footnote{Department of Mathematics,  Oregon State University, Corvallis, OR, 97331. enrique.thomann@oregonstate.edu}}
\date{\today}
\begin{document}
\maketitle
\begin{abstract} 
The paper explores the symbiotic relation between the Navier-Stokes equations and the associated stochastic cascades. Specifically, we examine how some well-known existence and uniqueness results for the Navier-Stokes equations can inform about the probabilistic features of the associated stochastic cascades, and how some probabilistic features of the stochastic cascades can, in turn, inform about the existence and uniqueness (or the lack thereof) of solutions. Our method of incorporating the stochastic explosion gives a simpler and more natural method to construct the solution compared to the original construction by Le Jan and Sznitman. This new stochastic construction is then used to show the finite-time blowup and non-uniqueness of the initial value problem for the Montgomery-Smith equation. %The new approach makes clear some symmetry properties (somewhat hidden in Le
%Jan--Sznitman's original approach) of the stochastic process underlying the construction. 
We exploit symmetry properties inherent in our construction to give a simple proof of the global well-posedness results for small initial data in scale-critical Fourier-Besov spaces. We also obtain the pointwise convergence of the Picard's iteration associated with the Fourier-transformed Navier-Stokes equations. %Combining the new method with a natural symmetry property of the stochastic process underlying the construction, we give an simple proof for the global well-posedness in Herz spaces.
\end{abstract}
\keywords{Navier-Stokes, Le Jan--Sznitman, Montgomery-Smith equation, Fourier-Besov spaces, Herz spaces}.\\\\
\amssc{35C15, 35Q30, 60J80, 76B03}.

\section{Introduction}\label{introduction}

The paper focuses on a natural connection, first noticed by Le Jan and Sznitmann \cite{lejan}, between the deterministic 3-dimensional incompressible Navier-Stokes equations and the branching stochastic cascades. Thanks to this connection, the problems of well-posedness can be recast in terms of probabilistic properties of a random functional defined on the stochastic cascades. These cascades provide interesting insights on the way information propagates from the initial data to the solution at time $t$, and shed some light on the standing problems in the regularity theory. 

Our main purpose is to explore the symbiotic relation between the Navier-Stokes equations and the associated stochastic cascades. On one hand, we examine how some well-known existence and uniqueness results for the Navier-Stokes equations can inform about the probabilistic features such as the integrability and explosion of the associated stochastic cascades. On the other hand, we investigate how some probabilistic features of these cascades such as the stochastic explosion and the distribution of number of crossing branches, in turn, can inform about the existence and uniqueness (or the lack thereof) of solutions.

One challenge in this direction is to interpret well-posedness results in typical functional settings used in the literature through the lens of this probabilistic structure, which naturally favors weighted $L^\infty$ spaces with very specific weights \cite {rabi}. Another challenge is to incorporate stochastic explosion in Le Jan-Sznitman's approach. We resolve these issues by using a ``minimal'' solution constructed from the stochastic cascade\textemdash an idea adapted from \cite{alphariccati}\textemdash and establishing the connection between this solution and the Picard's iterations. We now proceed to lay out the framework.
%In particular, we show that the mild-type solutions obtained through such iterations (in adapted spaces) coincide with solutions obtained from the stochastic cascades. %Thus, analytical results about existence and uniqueness results tor Navier-Stokes equations in a given functional settings has a direct consequence on probabilistic features of the underlying cascades. On the other hand, probabilistic features of these cascades, such as distribution on number of branches and issues of stochastic explosion directly affect problems of existence and uniqueness (or lack of thereof) for the solutions of underlying PDE. 

Consider the Cauchy problem for the incompressible Navier-Stokes equations in $d$-dimensions:
\begin{equation}\label{NS}\tag{NS}\left\{ {\begin{array}{*{20}{rcl}}
   {{\partial _t}u - \Delta u + u\cdot \nabla u + \nabla p = 0}&~~{\rm in}&\mathbb{R}^d\times(0,\infty),  \\
   {{\rm div}\, u = 0}&~~{\rm in}&\mathbb{R}^d\times(0,\infty),  \\
   {u(\cdot,0) = {u_0}}&~~{\rm in}&\mathbb{R}^d.  \\
\end{array}} \right.\end{equation}
The system has a well-known scaling property 

\begin{equation}\label{scaling}
u(x,t)\to\lambda u(\lambda x,\lambda^2 t),~~p(x,t)\to\lambda^2 p(\lambda x,\lambda^2 t),~~u_{0}(x)\to\lambda u_0(\lambda x),\ \ \lambda\in\mathbb{R}.
\end{equation}
%As prevalent in literature, 
The classic Kato's mild solutions are the solutions to the integral equation
%One can combine the three equations of \eqref{NS} to obtain the integral equation:
\begin{equation}\label{eq:828191}u(t)={{e}^{t\Delta }}{{u}_{0}}-\int_{0}^{t}{{{e}^{(t-s)\Delta }}\mathbf{P}\operatorname{div}(u\otimes u)ds}\end{equation}
obtained by Banach fixed-point method.
Here $\mathbf{P}$ denotes the Leray projection onto the divergence-free vector fields. %Kato's mild solutions to \eqref{NS} are the class of solutions to \eqref{eq:828191} obtained by fixed point method via Picard's iteration. 
Historically, the regularity theory of mild solutions traces back to the pioneering work of Leray \cite{leray}. 
%It is helpful to mention a few highlights here. First, 
Local well-posedness is known in various scale-subcritical and scale-critical spaces of the initial data.
% including the Lebesgue spaces, Sobolev spaces, Morrey-Campanato spaces, Besov spaces, and $\text{VMO}^{-1}$ \cite{leray, kato1984, katofujita1964, chemin92, dong2007, kato1992, lemarie2007, cannone1997, planchon1998, tataru}, \cite[Thm.\ 15.2]{lemarie2002}, \cite[Thm.\ 5.6]{bahouri}. 
 Global well-posedness is also known in various critical spaces provided that the initial data is sufficiently small.
% ; see e.g.\ \cite{tataru, auscher2004, germain2007}. 
Readers can refer to \cite{lemarie2002, lemarie2016, bahouri} for surveys of such results. Key to obtaining mild solutions is to select suitable function spaces for the initial data and the solutions (called the adapted space and path space 
%and denoted by $E$ and $\mathcal{E}_T$, respectively 
\cite[p.\ 146]{lemarie2002}) so that the fixed point method works. 
%To the authors' best knowledge, the space $\text{BMO}^{-1}$ introduced by Koch and Tataru is known to date the largest critical space in which NS is globally well-posed for small initial data . 
%Despite different choices of function spaces,
%%Given an initial datum, there may be many suitable function spaces to look for mild solutions, which could in theory result in non-uniqueness. In practice, however, 
%mild solutions associated with the same initial data and the same coincide with each other because they are built on the same Picard's iteration. 
This paper will focus on the Fourier formulation of the Navier-Stokes equations, i.e.\ the %If the norm of $u_0$ is expressed through the Fourier frequencies, for example when $u_0\in H^s$ or $\dot{H}^s$, it is convenient to apply the fixed point method directly to the 
Fourier transform of \eqref{eq:828191}:
\begin{equation*}\label{FNS}
\tag{FNS}%:\ \ \ \ 
{v}=U({v}_0)+{B}({v},{v})
\end{equation*}
where $v=\mathscr{F}\{{u}\}$, $v_0=\mathscr{F}\{{u}_0\}$, $U(v_0)=e^{-t|\xi|^2}v_0$ and
\begin{equation}\label{eq:71214}
{B}(v,v)=c_0\int_{0}^{t}{{{e}^{-s|\xi {{|}^{2}}}}|\xi |\int_{{{\mathbb{R}}^{d}}}{v(\eta ,t-s)\pr{\xi}v(\xi -\eta ,t-s)d\eta ds}}.
\end{equation}
%\begin{equation}\label{}\hat{u}(\xi ,t)={{e}^{-|\xi {{|}^{2}}t}}{{{\hat{u}}}_{0}}(\xi )+{{c}_{0}}\int_{0}^{t}{{{e}^{-|\xi {{|}^{2}}s}}|\xi |\int_{{{\mathbb{R}}^{d}}}{\hat{u}(\eta ,t-s){{\odot }_{\xi }}\hat{u}(\xi -\eta ,t-s)d\eta ds}}\end{equation}
%where $c_0=(2\pi)^{-d/2}$ and $a\odot_\xi b=-i(e_\xi\cdot b)(\pi_{\xi^\perp}a)$. Here we are using the following definition of Fourier transform:
%\[\hat{f}(\xi )=c_0\int_{{{\mathbb{R}}^{d}}}{f(x){{e}^{-ix\cdot \xi }}d\xi }.\]

Here $c_0=(2\pi)^{-d/2}$ and $\mathscr{F}\{u\}=c_0\int_{\mathbb{R}^d}u(x,t)e^{-ix\cdot\xi}dx$. The Leray projection is encoded in the $\pr{}$-product, which is a non-commutative, non-associative vector operation satisfying $|a\pr{\xi} b|\le|a||b|$. Specifically, $a\pr{\xi} b=-i(e_\xi\cdot b)(\pi_{\xi^\perp}a)$ where $e_\xi=\xi/|\xi|$ and $\pi_{\xi^\perp}a=a-(e_\xi\cdot a)e_\xi$. 

The existence and uniqueness results for \eqref{FNS} can be obtained by a fixed-point-type argument in suitably defined functional settings (adapted spaces) including the weighted $L^2$ spaces and Herz spaces $\dot{K}^\alpha_{p,q}$, which are the Fourier transform of the Fourier-Besov spaces $F\dot{B}^{\alpha}_{p,q}$; see e.g.\ %Different ranges of $\alpha,p,q$ are analyzed in 
\cite{cannonekarch, linlei2011, cannonewu, konieczny, xiao2014}, \cite[Sec.\ 16.3]{lemarie2002}, \cite[Sec.\ 8.7]{lemarie2016}. The formulation \eqref{FNS} is naturally associated with the Picard's iteration  $v^{(1)} = U(v_0)$ and $v^{(n+1)}= U(v_0) + B(v^{(n)},v^{(n)})$. Typically, the convergence of the iteration in a chosen path space is obtained by showing the boundedness of the bilinear operator $B$ in that space.  %As it is well known, the converge of the iterates is in the path space norm, 
From the convergence in norm, together with standard functional analysis tools, one might infer that there exists a \emph{subsequence} of $v^{(n)}$ that converges pointwise. The pointwise convergence of the \emph{entire} Picard's iteration, although often not required for regularity theory purposes, is desirable in applications (for example, in validating a numerical simulation).

The first goal of the paper is to establish conditions for the pointwise convergence of the sequence $v^{(n)}$ for initial data $v_0$ in suitable adapted spaces including the scale-critical Herz spaces (\autoref{75211}) and explore the connection between convergence of Picard iterations and the probabilistic iterative processes naturally associated with \eqref{FNS}. In fact, we will use this probabilistic structure in a crucial way to obtain our convergence results.

The idea of interpreting the solutions to a deterministic evolutionary PDE as the expectation of an associated stochastic process goes back to the classical relation between the heat equation and Brownian motion, which was extended to the Kolmogorov-Petrovskii-Puskinov (KPP) equation in the seminal work of McKean in 1975 \cite{mckean}. In particular, the presence of nonlinearity in the KPP equation leads to a branching stochastic structure (branching Brownian motion), which serves as a blueprint for applications of branching processes in the analysis of semilinear parabolic equations. 

For the 3-dimensional Navier-Stokes equations, Le Jan and Sznitmann \cite{lejan} noticed that after rescaling \eqref{FNS} by a suitable kernel $h(\xi)$, the solution can be interpreted as an expectation of a random variable\textemdash a \emph{solution process}\textemdash built on a branching stochastic structure\textemdash a {\em stochastic cascade}. 
This solution process is well-defined provided that the stochastic cascade does not generate infinitely many branches in finite time\textemdash a phenomenon called {\em stochastic non-explosion}. In Le Jan-Sznitmann's work, the non-explosion was obtained by a somewhat artificial thinning procedure. Their approach was later generalized to any spacial dimension $d\geq 3$ by Bhattacharya et al.\ \cite{rabi}. % We note that the functional setting associated with these construction is determined by the scaling kernel $h(\xi)$ -- typically a scaled $L^\infty$ in Fourier-space and time. 
In contrast to various existing shell models of turbulence which are based on  statistical assumptions on
the dynamics of fluid flows, Le Jan-Sznitman's
stochastic cascade provides a precise notion of averaging in the frequency domain directly from the Navier-Stokes equations, making it an appealing tool to investigate the energy spectrum of turbulent flows. We do not pursue this direction in the present paper.

%One advantage of the stochastic approach is that it allows to understand boundedness properties of the bilinear therm $B$ in \eqref{FNS}
%%, needed to ensure existence and uniqueness of solutions, can be understood 
%through the integrability properties of the solution random process built from the initial data evaluated on the branching stochastic cascade. This interpretation provides new insight into how dynamic scaling 
%%involving exponential branching clocks (the result of viscous dissipation) and random triangular interactions  
%represented by the stochastic cascade structure evolving in time can combine with size and Fourier-space structure of initial data to maintain boundedness and global existence of a mild solution or possibly  contribute to a growth of a solution and its finite-time blow up. 
%
Dascaliuc et al.\ \cite{chaos} simplified the construction of the stochastic process to address {\em the stochastic explosion}\textemdash the phenomenon that the stochastic cascade produces infinitely many branches in finite time. Recently, it was shown in \cite{part2} that the stochastic cascade compatible with the natural scaling \eqref{scaling} is explosive in dimension $d=3$ but not in dimensions $d\ge 12$. In the literature of Markov processes, stochastic explosion has been  exploited to produce blowup or nonuniqueness results of the associated  Kolmogorov backward equations (see e.g.\ \cite[Ch.\ 4, Sec.\ 6]{edbook}). Incorporating the stochastic explosion of a \emph{branching} Markov structure into the construction of solutions to the associated differential equation has been done in the case study of the $\alpha$-Riccati equation. In this model, the stochastic explosion was used to as a mechanism for finite-time blowup and nonuniqueness of solutions \cite{alphariccati}. %The second goal of our paper is to incorporate the stochastic explosion of the stochastic cascade of the Navier-Stokes equation (a {branching} Markov structure) into the construction of solutions. Our method improves the estimates on the size of initial data that ensure global-in-time solution to \eqref{FNS} and opens a pathway to study breakdown of uniqueness of solutions of \eqref{FNS}.

%In a recent research, \cite{part2} showed that the non-thinned cascade based on re-scaling compatible with the natural scaling \eqref{scaling}, is in fact {\em exploding}, while \cite{alphariccati}, working with a simplified model of \eqref{FNS} -- $alpha$-Riccati equation, developed a natural branching stochastic cascade approach that incorporated stochastic explosion and used it to show finite-time blow up and the non-uniqueness of solutions.
%This connection between stochastic explosion and lack of well-posedness is present in classical case on Markov processes, where stochastic explosion leads to non-uniqueness of solutions to the Kolmogorov backward equation; see e.g.\ \cite[Ch.\ 4, Sec.\ 6]{edbook}.

The second goal of the paper is to employ ideas from \cite{alphariccati} to introduce a simpler and more natural construction of solutions to \eqref{FNS} from the associated stochastic cascades that eliminates the necessity of thinning and allows for the stochastic explosion (\autoref{819191}). Our method results in the notion of \emph{minimal cascade solutions}, which is the counterpart of the minimal solutions of $\alpha$-Riccati equation developed in \cite{alphariccati} and is reminiscent of the notion of minimal solutions in the context of Markov processes.  Interestingly, incorporating explosion in the stochastic approach naturally connects with the aforementioned Picard's iterations, providing a natural venue to study their convergence, which was essential in establishing \autoref{75211}. We also note that the elimination of thinning improves the estimates on the size of initial data that ensures a global-in-time solution to \eqref{FNS}.

Because the solution to \eqref{FNS} is given by the expectation of a solution process, another obstacle to the stochastic construction besides the stochastic explosion is the possible lack of integrability of the solution process. 
%If $\mathbf{X}$ is well-defined and $\mathbb{E}_{\xi}|\mathbf{X}(\xi,t)|<\infty$ then the function $\chi(\xi,t)=\mathbb{E}_{\xi}\mathbf{X}(\xi,t)$ is a solution of \eqref{nFNS}. 
To secure the integrability, Le Jan-Sznitman and other authors, e.g.\ \cite{rabi, orum}, imposed a {\em pointwise} smallness condition on the normalized initial data. Although such a condition was enough to guarantee the existence of a global solution, the  local existence of solutions for large initial data was left unresolved by the stochastic approach (except by modifying the standard majorizing kernel, which complicates the stochastic process \cite[Ch.\ 6]{orum}, \cite{chris2d, rabi}).
%The first purpose of this section is to establish the pointwise convergence of the Picard iteration associated to \eqref{NS}, \autoref{75211}. 

The third goal of the paper is to establish the integrability of the solution process for initial data in functional spaces commonly used in the analysis of the \eqref{FNS}, thereby relaxing the pointwise smallness condition and obtaining local solutions for large initial data (\autoref{75212}). In order to do so, we establish a majorization principle 
%(\autoref{eq:73211}) 
that allows us to compare the size of solution process built for \eqref{FNS} to that built for a simplified equation \eqref{MS} introduced by Montgomery-Smith \cite{smith}. The simplified product structure of the Montgomery-Smith equation yields simple symmetry properties for the corresponding solution process. Interestingly, these simple symmetry properties lead to an alternative proof for the global well-posedness of \eqref{NS} in the scale-critical Fourier-Besov spaces (\autoref{simpleproof})\textemdash a result shown in \cite{konieczny, xiao2014, lizheng, lemarie2016}.

We further illustrate the versatility our stochastic approach that incorporates explosion by giving simple proofs for both the finite-time blowup (recovering the result in \cite{smith}) and the nonuniqueness of initial value problem for the Montgomery-Smith equation (\autoref{nonuniqueness}). Although similar conclusions for Navier-Stokes equation remain elusive due to possible depletions of nonlinearity. Such cancellations come from a vectorial product encoding both the nonlinearity and the divergence-free constraint of \eqref{NS}. By examining the geometric structure of the product, we obtain a cancellation-type property (\autoref{sprod_lem}) that improves the size the initial data leading to global-in-time solutions.

The organization of the paper is as follows. 
In \autoref{setup}, we give preliminaries regarding the stochastic cascade setup for \eqref{FNS}.
%In \autoref{cascadesol}, 
We define the minimal cascade solution to \eqref{FNS} that eliminates the thinning procedure and allows for stochastic explosion. We   %from \eqref{eq:822191} and 
show, under rather mild assumptions, the equivalence between the minimal cascade solution and the \emph{thinned cascade solution} defined by \cite{lejan, rabi}. These two solutions will be referred to as \emph{cascade solutions} due to the nature of their construction. We also show that the cascade solutions agree with the Picard's iteration scheme of \eqref{FNS}. %We note that if a Kato's mild solution exists then a cascade solution exists and coincides with the mild solution on the time interval of existence of the mild solution. 
%As a consequence of this relation between the Picard iteration and the cascade solutions, we show the {\em{pointwise}} convergence of the iterates. 
In \autoref{reduction}, we analyze the relation between the Navier-Stokes equations and the Montgomery-Smith equation which have exactly the same underlying stochastic cascade structure. %Cascade solutions of \eqref{FNS} are shown to be controlled by the corresponding cascade solutions of \eqref{nFMS} (majorizing principle), extending a similar result in \cite{lejan} for thinned cascade solutions. 
We strengthen the majorizing principle used in \cite{lejan, rabi} by exploiting the geometry of the nonlinear term and some natural symmetry properties of the solution process. % $|\chi_0|\to|\chi_0|^\alpha$, $\mathbf{Y}\to\mathbf{Y}^\alpha$ of the stochastic process $\mathbf{Y}$ associated with \eqref{nFMS} gives rise to a generalized majorizing principle. 
%We also introduce $S$ as the collection of  non-zero, convex, submultiplicative functions, vanishing at the origin. For $f\in S,$ solutions $\phi_0$ and $\phi_1$  of \eqref{MS} with initial data $\rho_0, f(\rho_0)$ respectively, satisfy pointwise $\phi_1\leq f^{-1}(\phi_0)$.
%%symmetry/ordering principle applicable to solutions of the \eqref{MS} showing that under the transformation of the initial data $\chi_0 \arrow f(\chi_0)$ 
%The {\emph{generalized majoring principle}} given in \autoref{614211} shows that a similar relation can be obtained for the norm of solutions of (NSE). 
%In particular solutions of the \eqref{MS} satisfy simple algebraic symmetry property related to the model equation $\chi_0\to\chi_0^\alpha$, $\mathbf{X}\to\mathbf{X}^\alpha$ and apply it to obtain an estimate on the solution to \eqref{nFNS}.  %In section \autoref{examples} we show that $\mathbb{E}[|\mathbf{X}_{nFNS}(\xi,t)|] < \infty$ under assumptions standard in the analysis of the Navier-Stokes equations, without requiring a pointwise estimation of the initial data}
In \autoref{applications}, we give some applications of the generalized majorizing principle.
% to  show the pointwise convergence of the Picard's iteration for the \eqref{FNS} for initial data in suitable adapted spaces (\autoref{75211}). As a consequence, \autoref{75212} shows the validity of the cascade solutions without requiring the pointwise smallness of the initial data.
%In \autoref{fourierbesov}, we 
%incorporate the generalized majorizing principle with the global-wellposedness in the Lei-Lin space $F\dot{B}^{-1}_{1,1}$ to 
%give an alternative proof for the global-wellposedness of \eqref{NS} in the scale-critical Fourier-Besov spaces (or equivalently, the global well-posedness of \eqref{FNS} in scale-critical Herz spaces)
An analytic proof of the generalized majorizing principle is given in the appendix.
%The global well-posedness of \eqref{NS} in is known  The aforementioned estimate provides a simple alternative explanation for the global-wellposedness in the spaces $F\dot{B}^{d/p-d-1}_{p,p}$, for $1< p<\infty$, given the global-wellposedness in the Lei-Lin space $F\dot{B}^{-1}_{1,1}$. 
\section{Stochastic cascade setup and minimal cascade solution}\label{setup} 
In this section, we describe the stochastic cascade structure associated with \eqref{FNS}. In particular, we modify the construction from \cite{lejan} using ideas from \cite{chaos, alphariccati} with the aim to incorporate explosive cascades. 

First, in order to optimize our estimates in subsequent sections,  we symmetrize our bilinear term. Note that $B(u,u)$ in \eqref{eq:71214} is invariant under the change of variable $\tilde{\eta}=\xi-\eta$.  Thus, we can view $B(u,u)$ as the diagonal part of the bilinear operator

\begin{equation}\label{eq:71214-2}
{B}(f,g)=\frac{1}{2}(B(f,g)+B(g,f))=c_0\int_{0}^{t}{{{e}^{-s|\xi {{|}^{2}}}}|\xi |\int_{{{\mathbb{R}}^{d}}}{f(\eta ,t-s)\spr{\xi}g(\xi -\eta ,t-s)d\eta ds}},
\end{equation}
where $\spr{}$ is the symmetrized version of $\pr{}$:
\begin{equation}\label{symm-odot}
a\spr{\xi}b= \frac{1}{2}(a\pr{\xi} b+b\pr{\xi} a)=\frac{-i}{2}\left( (e_\xi\cdot b)(\pi_{\xi^\perp}a)+ (e_\xi\cdot a)(\pi_{\xi^\perp}b)\right).
\end{equation}

From now on, all our discussion will use this symmetrized form of the nonlinearity. The symmetrized product $\spr{}_\xi$ satisfies the following estimate.

\begin{lem}\label{sprod_lem}
Let $\xi\in\mathbb{R}^d\backslash\{0\}$. Then
\begin{equation}\label{sprod-bd}
|v\spr{\xi} w|\le\frac{1}{2}|v|\,|w|\ \ \ \forall v,w\in \mathbb{C}^d.
\end{equation}
Moreover, the constant factor 1/2 on the right hand side is optimal.
% inequality above is sharp. even for real divergence-free vector fields, i.e. there exist vector fields $u(\xi)$ such that for all $\xi$ $\xi\cdot u(\xi)=0$, $\bar{u}(\xi)=u(-\xi)$, and
%\[
%|u(\eta)\spr{\xi} u(\xi-\eta)| = \frac{1}{2}|u(\eta)|\,|u(\xi-\eta)|.
%\]
\end{lem}

\begin{proof}
%Note that without loss of generality, we can assume $|v|=|w|=|\xi|=1$. 
Using the orthogonal decomposition $\mathbb{C}^d=\mathbb{C}\xi\oplus\xi^\perp$, we can write
\[v=\alpha \xi+ a, \qquad w=\beta \xi + b,\]
where $\alpha,\beta\in\mathbb{C}$ and $a=\pi_{\xi^{\perp}}v$, $b=\pi_{\xi^{\perp}}w$. Then,
\[|2 v\spr{\xi}w|^2=\left| \alpha b +\beta a\right|^2\le (|\alpha||b|+|\beta||a|)^2\le\left[\left(|\alpha|^2+|a|^2\right)^{1/2}\left(|b|^2+|\beta|^2\right)^{1/2}\right]^2=|v|^2|w|^2.\]
%Note that equality in the above holds when $B=\gamma A$ for some $\gamma\in \mathbb{C}$ and $\alpha\bar{\beta}\langle B,A\rangle\in\mathbb{R}_{+}$ (e.g, when $\alpha,\beta,\gamma\in\mathbb{R}_+$). 
%
%Next, noting the complete square we have
%\[\begin{aligned}|2 v\spr{\xi}w|^2&\le (|\alpha||B|+|\beta||A|)^2=\left[(|\alpha|,|A|)\cdot(|B|,|\beta|)\right]^2
%\le\left[\left(|\alpha|^2+|A|^2\right)^{1/2}\left(|B|^2+|\beta|^2\right)^{1/2}\right]^2\\
%&=|v|^2|w|^2,
%\end{aligned}\]
%and \eqref{sprod-bd} follows.
%Observe that the Cauchy-Schwarz inequality for vectors in $\mathbb{R}^2$ applied in the second inequality above is saturated when $|\alpha|/|v|=|B|/|w|$ and $|\beta|/|w|=|A|/|v|$.
%Combining this with the condition for saturation on previous inequality, we obtain that 
To get the equality in \eqref{sprod-bd}, one can choose, for example, unit vectors $v,w\in\mathbb{C}^d$ satisfying
$b=\gamma a$, $\alpha=\gamma/(1+\gamma^2)^{1/2}$, and $\beta=1/(1+\gamma^2)^{1/2}$
for some $\gamma\ge0$.
\end{proof}
%\begin{rem}
%The example in the proof on \autoref{sprod_lem} can be adapted for for real divergence-free vector fields, i.e. there exist vector fields $u(\xi)$ such that for all $\xi$, $\xi\cdot u(\xi)=0$, $\bar{u}(\xi)=u(-\xi)$, and
%\[
%|u(\eta)\spr{\xi} u(\xi-\eta)| = \frac{1}{2}|u(\eta)|\,|u(\xi-\eta)|.
%\]
%for some $\xi,\eta\in\mathbb{R}^d$.
%\end{rem}

We now turn to the construction of stochastic cascades associated with \eqref{FNS}. The key idea is to normalize $v$ by $\chi=c_0v/h$, where $h:\mathbb{R}^d\backslash\{0\}\to(0,\infty)$ is chosen such that, for any $\xi\in \mathbb{R}^d\backslash\{0\}$,
$H(\eta |\xi )=\frac{h(\eta )h(\xi -\eta )}{|\xi |h(\xi )}$
defines a probability density function in $\eta\in\mathbb{R}^d$ \cite{lejan, rabi, chaos}. For this, $h$ must satisfy $h*h=|\xi|h$ and is called a \emph{standard majorizing kernel} \cite{rabi}. Such functions include the Bessel kernel $h_\text{b}(\xi)=c{{{e}^{-|\xi |}}}{|\xi |}^{-1}$ for $d=3$, and the scale-invariant kernel $h_{\text{in}}(\xi)=c_d|\xi|^{1-d}$ for $d\ge 3$ (\cite[p.\ 18-19]{orum}).
%The known choices of such functions are (\cite[p.\ 18-19]{orum}) the Bessel : 
%the Bessel kernel $h_b(\xi)=\frac{\beta }{2\pi }\frac{{{e}^{-\beta |\xi |}}}{|\xi |}$ for $d=3$ and any $\beta>0$, and the scale-invariant kernel $h_{si}(\xi )=\frac{\Gamma {{\left( \frac{d-1}{2} \right)}^{2}}}{\Gamma \left( \frac{d-2}{2} \right){{\pi }^{(d+2)/2}}}\frac{1}{|\xi {{|}^{d-1}}}$ for $d\ge 3$.
%Known standardized majorizing kernels include (see e.g.\ \cite[p.\ 18-19]{orum}):
%\begin{enumerate}[(i)]
%\item In dimension 1, for any $\alpha\in\mathbb{R}$,
%$h(\xi )={{e}^{-\alpha \xi }}{{I}_{\xi \ge 0}}$.
%\item For $d=3$ and for any $\beta>0$,
%$h(\xi )=\frac{\beta }{2\pi }\frac{{{e}^{-\beta |\xi |}}}{|\xi |}$ (the Bessel kernels),
%\item For $d\ge 3$,
%$h(\xi )=\frac{\Gamma {{\left( \frac{d-1}{2} \right)}^{2}}}{\Gamma \left( \frac{d-2}{2} \right){{\pi }^{(d+2)/2}}}\frac{1}{|\xi {{|}^{d-1}}}$ (the self-similar kernel, also called the dilogarithmic kernel when $d=3$).
%\end{enumerate}
Standard majorizing kernels do not exist for dimensions $d<3$ \cite{chris-mina}. %(For $d=2$, a method to get around this issue can be found in \cite{chris2d}.) 
With the normalization above, \eqref{FNS} can be written in terms of $\chi$ as
\begin{equation}\label{nFNS}\tag{nFNS} 
\chi (\xi ,t)={{e}^{-t|\xi {{|}^{2}}}}{{\chi }_{0}}(\xi)+\int_{0}^{t}{{{e}^{-s|\xi {{|}^{2}}}}|\xi {{|}^{2}}\int_{{{\mathbb{R}}^{d}}}{\chi (\eta ,t-s)\odot_{\xi}\chi (\xi -\eta ,t-s)H(\eta |\xi )d\eta ds }}.\end{equation}
At this point, the solution $\chi$ can be interpreted probabilistically as the expected value of a ``solution process'' $\mathbf{X}$ defined implicitly by% For each $\xi\in \mathbb{R}^d\backslash\{0\}$ and $t>0$, define recursively a stochastic functional $\mathbf{X}$ as
\begin{equation}\label{eq:822191}\mathbf{X}(\xi,t)=\left\{ \begin{array}{*{35}{l}}
   \chi_0(\xi) & \text{if} & {{T}_{0}}\ge t,  \\
   {\mathbf{X}}^{(1)}(W_1, t-{{T}_{0}}){{\odot }_{\xi }}{\mathbf{X}}^{(2)}(W_2, t-{{T}_{0}}) & \text{if} & {{T}_{0}}< t.  \\
\end{array} \right.\end{equation}
Here $T_0$ is an exponentially distributed random variable with mean $|\xi|^{-2}$, $W_1\in\mathbb{R}^d$ is a random variable independent of $T_0$ with probability density $H(\cdot|\xi)$, $W_2=\xi-W_1$, and $\mathbf{X}^{(1)}$ and $\mathbf{X}^{(2)}$ are two independent copies of $\mathbf{X}$. The definition \eqref{eq:822191} when applied recursively induces two families of random variables $\{{T}_{v}\}_{v\in\tree}$ and $\{W_{v}\}_{v\in\tree}$, indexed by a binary tree 
$\tree=\{0\}\cup(\cup_{n=1}^\infty \{1,2\}^n)$, described by
\begin{enumerate}[(i)]
\item $W_0=\xi,$\vspace{-4pt}
\item Given $W_v$, $T_v$ is distributed as Exp($|W_v|^{2}$), $W_{v1}$ is distributed as $H(\cdot|W_v)$ and $W_{v2}=W_v-W_{v1}$,\vspace{-4pt}
%\item Given $W_v$, $T_v$ is exponentially distributed with mean $|W_v|^{-2}$.\vspace{-4pt}
\item Given $W_{v1}$ and $W_{v2}$, the sub-families $\{W_{v1\sigma}\}_{\sigma\in\mathbb{T}}$ and $\{W_{v2\sigma}\}_{\sigma\in\mathbb{T}}$ are independent of each other.
\end{enumerate}
\begin{figure}[h]
\centering
\includegraphics[scale=.8]{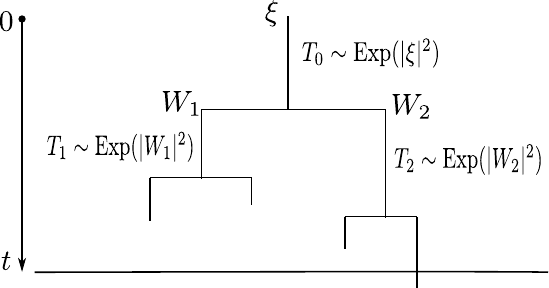}
\caption{Cascade figure that illustrates $\{{T}_{v}\}_{v\in\tree}$ and $\{W_{v}\}_{v\in\tree}$.}
\label{cascade}
\end{figure}
\autoref{cascade} is a cascade figure that illustrates the families $\{{T}_{v}\}_{v\in\tree}$ and $\{W_{v}\}_{v\in\tree}$ induced from the recursion \eqref{eq:822191}. For each $\xi\in\mathbb{R}^d\backslash\{0\}$, these random variables are defined on a probability space $(\Omega, \mathcal{F}, \mathbb{P}_\xi)$. We  denote by $\mathbb{E}_\xi$ the expectation of random variables with respect to the probability measure $\mathbb{P}_\xi$.  Note that the recursion \eqref{eq:822191} might keep on going indefinitely, making $\mathbf{X}$ not well-defined, a phenomenon referred as {\it{stochastic explosion}} or simply explosion. In fact, it has been shown in \cite{part2} that the cascade corresponding to the scale-invariant kernel in $\mathbb{R}^3$ is almost surely explosive for every $\xi\in\mathbb{R}^3\backslash\{0\}$. 

The \emph{explosion time} of the cascade starting at $\xi$ is a random variable $\zeta$  defined as (see also \cite{alphariccati, chaos, part1})
\begin{equation}
\label{explosiontime}
\zeta=\zeta_{\xi} 
%=\inf_{s\in\{1,2\}^\infty}\sum_{j=0}^\infty T_{s|j}
=\underset{n\ge 0 }{\mathop{\sup }}\,\underset{|v|=n}{\mathop{\min }}\,\sum\limits_{j=0}^{n} T_{v|j}.\end{equation}
Note that in the {\em non-explosion} event $[\zeta> t]$, the recursion \eqref{eq:822191} terminates in finitely many steps. In the {\em explosion} event $[\zeta<t]$, the solution process $\mathbf{X}$ is not well-defined for $t>\zeta$. 

A stochastic cascade where $\zeta=\infty$ a.s. is called {\em non-explosive}. For non-explosive cascades, $\mathbf{X}$ is well defined a.s. for all $t>0$. 
In fact, in this case, $\mathbf{X}$ is a $\spr{}$-product, in a suitable order, of the initial data $\chi_0$ evaluated on the leaves of the cascade tree that ``reached" time $t$:
\begin{equation}\label{Vxit}
V(\xi,t)=\left\{v\in\mathbb{T}:\,\sum_{j=0}^{|v|-1}T_{v|j}<t\le\sum_{j=0}^{|v|}T_{v|j}\right\}
\end{equation}
Here $|v|$ denotes the genealogical height of vertex $v$ and $v|j$ denotes the truncation up to the $j$'th generation with the convention that $v|0=0$. We note that the stochastic cascade for Navier-Stokes equations associated with the Bessel kernel $h_\text{b}$ is non-explosive, while the cascade associated with the scale-invariant kernel $h_{\text{in}}$ is explosive in dimension $d=3$ \cite{part2}.

\subsection{Thinned cascades and associated solutions}

As we noted in the introduction, Le Jan and Sznitman bypassed the stochastic explosion issue by incorporating into \eqref{eq:822191} a force term (even in the case of zero forcing) and using a somewhat ad hoc thinning procedure:
% (see \cite{lejan} and further development \cite{rabi}): 
 they split the event $[T_0<t]$ into two sub-events by a Bernoulli coin flip and allowed branching to occur only on one of them. In effect, this procedure eliminates the stochastic explosion by altering the recursion \eqref{eq:822191} to\footnote{The notation $\bar{\mathbf{X}}$ should not be interpreted as complex conjugation despite the fact that the solution process is indeed complex-valued.}
\begin{equation}\label{eq:511211}\bar{\mathbf{X}}(\xi,t)=\left\{ \begin{array}{*{35}{l}}
   \chi_0(\xi) & \text{if} & {{T}_{0}}\ge t,  \\
   0 & \text{if} & {{T}_{0}}< t,\,K_0=0,  \\
   2\bar{\mathbf{X}}^{(1)}(W_1, t-{{T}_{0}}){{\odot }_{\xi }}\bar{\mathbf{X}}^{(2)}(W_2, t-{{T}_{0}}) & \text{if} & {{T}_{0}}< t,\,K_0=1  \\
\end{array} \right.\end{equation}
where $K_0$ is a Bernoulli random variable, independent of the others, with $\mathbb{P}(K_0=0)=1/2$. Similar thinning procedures are used in \cite{romito, mendes} for different differential equations. 

Visually, the thinned recursion \eqref{eq:511211} is also illustrated by \autoref{cascade} except that the tree is finite. To be specific, one needs three tree-indexed families of random variables: $\{W_v\}_{v\in\mathbb{T}}$, $\{T_v\}_{v\in\mathbb{T}}$ which satisfy properties (i)-(iii) in \autoref{introduction}, and $\{K_v\}_{v\in\mathbb{T}}$ which is an i.i.d.\ family of Bernoulli($1/2$) random variables independent of $\{W_v\}$ and $\{T_v\}$. For each $\xi \in \mathbb{R}^d\backslash\{0\}$, these families of random variables are defined on a probability space $(\bar{\Omega}, \bar{\mathcal{F}}, \bar{\mathbb{P}}_\xi)$. For the sake of simplicity of notations, we still denote by ${\mathbb{E}}_\xi$ the expectation with respect to the probability measure $\bar{\mathbb{P}}_\xi$. The recursion progresses along each path of $\mathbb{T}$, starting at the root, either branching into two at vertex $v$ if $K_v=1$ and $\sum_{j=0}^{|v|}T_{v|j}<t$, or stopping at $v$ if $K_v=0$ or $\sum_{j=0}^{|v|}T_{v|j}\ge t$. The family $\{K_v\}$ itself determines a critical Galton-Watson tree with offspring distribution $p_k=1/2$, for $k=0$ or $2$, which ensures the finiteness of the thinned recursion. The branching probability $1/2$ is not essential on this regard. In fact, one can choose $\mathbb{P}(K_0=1)=p$ for any $p\in(0,1/2]$ (the factor 2 on the the right hand side of \eqref{eq:511211} would then be replaced by $1/p$) to obtain a subcritical/critical Galton-Watson process, which gives the same effect.

If ${\mathbb{E}}_\xi|\bar{\mathbf{X}}(\xi,t)|<\infty$, %a condition satisfied by assuming $|\chi_0|\le 1$, 
then the \emph{thinned cascade solution} is defined as $\bar{\chi}={\mathbb{E}}_\xi\bar{\mathbf{X}}(\xi,t)$. The simple observation going back to \cite{lejan} is that if $|\chi_0|\le 1$ pointwise in $\xi$, then $|\bar{\mathbf{X}}|\le 1$ for all time, and therefore, $\chi=\mathbb{E}_\xi \bar{\mathbf{X}}$ is well-defined and solves \eqref{nFNS}.

\subsection{Incorporating explosion -- minimal cascade solutions}\label{cascadesol}

In this section, we define the minimal cascade solution to \eqref{nFNS} from the non-thinned recursion \eqref{eq:822191} and show the equivalence (under rather mild integrability conditions) between this solution and the  thinned cascade solution defined from the thinned recursion \eqref{eq:511211} by Le Jan-Sznitman and Bhattacharya et al.

%Under the integrability assumption $\mathbb{E}_\xi|\bar{\mathbf{X}}_{\textup{nFNS}}|<\infty$, Bhattacharya et al showed that $\bar{\chi}_{\textup{nFNS}}=\mathbb{E}_\xi\bar{\mathbf{X}}_{\textup{nFNS}}$ is a solution to \eqref{nFNS} \cite[Thm.\ 3.1]{rabi}. 
Although the recursion \eqref{eq:822191} may not terminate after finitely many steps, a solution process can still be constructed on the non-explosion event $[\zeta>t]$, on which the recursion \eqref{eq:822191} stops after finitely many steps. In other words, the stochastic process 
\begin{equation}\label{lwrX}
\lwrX(\xi,t)=\mathbf{X}(\xi,t)\mathbbm{1}_{\zeta>t}
\end{equation} 
is well-defined. What connects $\lwrX$ to \eqref{nFNS} is the ``inherited'' property of the non-explosion event $\{\zeta>t\}$ when $t>T_0$: namely $\zeta=\zeta_\xi>t$ if and only if the sub-cascades starting at frequencies $W_1$ and $W_2$ have explosion times $\zeta_{_{W_1}},\zeta_{_{W_2}}>t-T_0$.  Thus, taking into account that  $\{T_0\ge t\}\subseteq\{\zeta> t\}$,  \eqref{eq:822191} yields
\begin{equation}\label{eq:822191-2}\mathbf{X}(\xi,t)\mathbbm{1}_{\zeta>t}=\left\{ \begin{array}{*{35}{l}}
   \chi_0(\xi) & \text{if} & {{T}_{0}}\ge t,  \\
   {\mathbf{X}}^{(1)}(W_1, t-{{T}_{0}})\mathbbm{1}_{\zeta_{_{W_1}}>t-T_0}{{\odot }_{\xi }}{\mathbf{X}}^{(2)}(W_2, t-{{T}_{0}})\mathbbm{1}_{\zeta_{_{W_2}}>t-T_0} & \text{if} & {{T}_{0}}< t.  \\
\end{array} \right.\end{equation}
In other words, the random variable $\lwrX(\xi,t)$, in addition to being well-defined, satisfies the recursion \eqref{eq:822191}.

We show below that, under a natural integrability assumption, the function $\chi=\mathbb{E}_\xi[\lwrX]=\mathbb{E}_\xi[\mathbf{X}\mathbbm{1}_{\zeta> t}]$ is a solution to \eqref{nFNS}. This function will be referred to as the \emph{minimal cascade solution}.

As usual, we use the Lebesgue measure on $\mathbb{R}^k$ and $\mathbb{C}^k$, $k\in\mathbb{N}$. A subset of a measurable set is said to have full measure if the complement has measure zero.
\begin{prop}\label{819191}
Let $\chi_0:\mathbb{R}^d\to\mathbb{C}^d$ be a measurable function and let $\mathbf{X}$ be a stochastic process satisfying \eqref{eq:822191}. Suppose $\mathbb{E}_\xi[|{\mathbf{X}}|\mathbbm{1}_{\zeta> t}]<\infty$ for all $(\xi,t)\in Q$, where $Q\subset \mathbb{R}^d\times[0,T)$ is a subset with full measure. Then the function $\chi(\xi,t)=\mathbb{E}_\xi[{\mathbf{X}}\mathbbm{1}_{\zeta> t}]$ is well-defined and measurable on $Q$ with the integrability property 
\begin{equation}\label{intcond}\int_{0}^{t}|\xi {{|}^{2}}{{e}^{-|\xi {{|}^{2}}s}}{\int_{{{\mathbb{R}}^{d}}}{|{{\chi }}(\eta ,s){{\odot }_{\xi }}{{\chi }}(\xi-\eta ,s)|H(\eta |\xi )d\eta ds}<\infty }\ \ \ \forall\,(\xi,t)\in Q.\end{equation}
Moreover, it satisfies {\eqref{nFNS}} everywhere in $Q$. 

\end{prop}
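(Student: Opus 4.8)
The plan is to give $\mathbf{X}\mathbbm{1}_{\zeta>t}$ a meaning as a genuine random variable, run the recursion \eqref{eq:822191} \emph{one step}, and take $\mathbb{E}_\xi$. \textbf{Well-definedness and measurability.} On $[\zeta_\xi>t]$ the non-decreasing sequence $n\mapsto\min_{|v|=n}\sum_{j=0}^{n}T_{v|j}$ eventually exceeds $t$, so along every branch the recursion reaches a vertex whose accumulated time is $\ge t$ within a uniformly bounded number of generations; the explored subtree is thus finite and $\mathbf{X}\mathbbm{1}_{\zeta_\xi>t}$ is well-defined. Realizing $\{T_v\}$ and $\{W_v\}$ simultaneously for all $\xi$ on one space through measurable inverse-c.d.f.\ maps (the rate $|W_v|^2$ and the density $H(\cdot|W_v)$ depending measurably on $W_v$) makes $(\xi,t,\omega)\mapsto\mathbf{X}(\xi,t)\mathbbm{1}_{\zeta_\xi>t}$ jointly measurable, being the pointwise limit of the finite-depth truncations; hence on $Q$, where $\mathbb{E}_\xi[|\mathbf{X}|\mathbbm{1}_{\zeta>t}]<\infty$, the function $\chi(\xi,t)=\mathbb{E}_\xi[\mathbf{X}\mathbbm{1}_{\zeta>t}]$ is well-defined and (by Fubini) measurable.

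\textbf{One-step identity and conditioning.} The defining supremum for $\zeta$ splits along the first generation: $\zeta_\xi=T_0+\min(\zeta^{(1)},\zeta^{(2)})$, where $\zeta^{(i)}$ is the explosion time of the subcascade rooted at the $i$-th child at position $W_i$ (here one uses that the minimal partial sums are non-decreasing in the generation). Thus $[\zeta_\xi>t]\cap[T_0<t]=[T_0<t]\cap[\zeta^{(1)}>t-T_0]\cap[\zeta^{(2)}>t-T_0]$ up to a null set, and \eqref{eq:822191} gives, a.s.,
\[\mathbf{X}(\xi,t)\mathbbm{1}_{\zeta_\xi>t}=\chi_0(\xi)\mathbbm{1}_{T_0\ge t}+\mathbbm{1}_{T_0<t}\,\big(A\odot_\xi B\big),\]
with $A=\mathbf{X}^{(1)}(W_1,t-T_0)\mathbbm{1}_{\zeta^{(1)}>t-T_0}$ and $B=\mathbf{X}^{(2)}(W_2,t-T_0)\mathbbm{1}_{\zeta^{(2)}>t-T_0}$. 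I then condition on $\mathcal{G}=\sigma(T_0,W_1)$: by the branching structure (i)--(iii), given $W_1=w$ the family attached to the first child has the law of the full family under $\mathbb{P}_w$ and is independent of $T_0$, likewise for the second with $W_2=\xi-W_1$, and the two subfamilies are conditionally independent given $\mathcal{G}$. Since $(W_i,t-T_0)$ has a density on $\mathbb{R}^d\times(0,t)$ it lies in $Q$ a.s., so $\mathbb{E}_\xi[A\mid\mathcal{G}]=\chi(W_1,t-T_0)$, $\mathbb{E}_\xi[B\mid\mathcal{G}]=\chi(W_2,t-T_0)$, and $\mathbb{E}_\xi[|A|\,|B|\mid\mathcal{G}]=\mathbb{E}_\xi[|A|\mid\mathcal{G}]\,\mathbb{E}_\xi[|B|\mid\mathcal{G}]<\infty$ a.s.; as $\odot_\xi$ is \emph{bilinear}, conditional independence yields $\mathbb{E}_\xi[A\odot_\xi B\mid\mathcal{G}]=\chi(W_1,t-T_0)\odot_\xi\chi(W_2,t-T_0)$.

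\textbf{Integrability \eqref{intcond} and conclusion.} Reading the last identity backwards, on $[T_0<t]$ one has $\chi(W_1,t-T_0)\odot_\xi\chi(W_2,t-T_0)=\mathbb{E}_\xi[\mathbf{X}(\xi,t)\mathbbm{1}_{\zeta_\xi>t}\mid\mathcal{G}]$, so conditional Jensen gives $|\chi(W_1,t-T_0)\odot_\xi\chi(W_2,t-T_0)|\le\mathbb{E}_\xi[|\mathbf{X}|\mathbbm{1}_{\zeta>t}\mid\mathcal{G}]$ and hence
\[\mathbb{E}_\xi\big[\mathbbm{1}_{T_0<t}\,|\chi(W_1,t-T_0)\odot_\xi\chi(W_2,t-T_0)|\big]\le\mathbb{E}_\xi[|\mathbf{X}|\mathbbm{1}_{\zeta>t}]<\infty.\]
The law of $(T_0,W_1)$ under $\mathbb{P}_\xi$ being $|\xi|^2e^{-|\xi|^2 s}\,\mathbbm{1}_{s>0}\,ds\otimes H(\eta|\xi)\,d\eta$, the left-hand side equals $\int_0^t|\xi|^2e^{-|\xi|^2 s}\int_{\mathbb{R}^d}|\chi(\eta,t-s)\odot_\xi\chi(\xi-\eta,t-s)|H(\eta|\xi)\,d\eta\,ds$; substituting $s\mapsto t-s$ and multiplying by $e^{t|\xi|^2}$ is exactly \eqref{intcond}. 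Finally, taking $\mathbb{E}_\xi$ in the one-step identity, using $\mathbb{P}_\xi(T_0\ge t)=e^{-t|\xi|^2}$ and the tower property with the conditioning identity (legitimate by the absolute integrability just established), gives $\chi(\xi,t)=e^{-t|\xi|^2}\chi_0(\xi)+\mathbb{E}_\xi[\mathbbm{1}_{T_0<t}\,\chi(W_1,t-T_0)\odot_\xi\chi(W_2,t-T_0)]$, and expanding the expectation against the joint law of $(T_0,W_1)$ produces precisely $\textup{(nFNS)}_{\chi_0}$ on $Q$.

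\textbf{Main obstacle.} The delicate step is the conditioning: cleanly stating and using the self-similarity of the tree-indexed families --- given $W_1$, the first subtree \emph{is} an independent copy of the whole cascade started at $W_1$, independent of $T_0$ --- and discharging the attendant conditional-integrability checks so that all conditional expectations, and their interchange with the bilinear $\odot_\xi$, are legitimate. Bilinearity of $\odot_\xi$ is what makes both the conditioning identity and, read in reverse, the bound \eqref{intcond} fall out of the single hypothesis $\mathbb{E}_\xi[|\mathbf{X}|\mathbbm{1}_{\zeta>t}]<\infty$; the measurability point is routine bookkeeping once a joint realization is fixed.
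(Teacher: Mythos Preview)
Your proof is correct and follows essentially the same route as the paper's: both identify the inherited property $\mathbbm{1}_{\zeta_\xi>t}\mathbbm{1}_{T_0<t}=\mathbbm{1}_{\zeta_{W_1}>t-T_0}\mathbbm{1}_{\zeta_{W_2}>t-T_0}\mathbbm{1}_{T_0<t}$, condition on $(T_0,W_1,W_2)$, use conditional independence of the two sub-cascades to factor, and then read off \textup{(nFNS)} and the integrability bound from the one-step recursion.

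The only notable difference is in the integrability step \eqref{intcond}. The paper exploits the explicit product structure $|a\odot_\xi b|=|e_\xi\cdot b|\,|\pi_{\xi^\perp}a|$ to factor $\mathbb{E}_\xi[|\mathbf{Y}^{(1)}\odot_\xi\mathbf{Y}^{(2)}|\,|\,W_1=\eta]$ as $\mathbb{E}_\eta[|\mathbf{Y}^{(1)}\cdot e_\xi|]\,\mathbb{E}_{\xi-\eta}[|\pi_{\xi^\perp}\mathbf{Y}^{(2)}|]$ and then bounds each factor below via $|\mathbb{E}[\cdot]|\le\mathbb{E}[|\cdot|]$. You instead first establish the conditioning identity $\mathbb{E}_\xi[A\odot_\xi B\mid\mathcal{G}]=\chi(W_1,\cdot)\odot_\xi\chi(W_2,\cdot)$ and then apply conditional Jensen directly to it. Your version uses only bilinearity and $|a\odot_\xi b|\le|a||b|$, not the particular factored form of $|\cdot\odot_\xi\cdot|$, which is a mild gain in generality; otherwise the two arguments are equivalent.
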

\begin{proof}
Note that $\lwrX(\xi,t)=\mathbf{X}(\xi,t)\mathbbm{1}_{\zeta>t}$ is well-defined and measurable on $Q\times\Omega$. Since $\mathbf{X}(\xi,t)\mathbbm{1}_{\zeta>t}\in L^1(\mathbb{P}_\xi)$, the function $\chi(\xi,t)=\int_\Omega \mathbf{X}(\xi,t,\omega)\mathbbm{1}_{\zeta>t}\mathbb{P}_\xi(d\omega)$ is measurable on $Q$ (see \cite[Thm.\ 2.39]{folland}). To show the integrability, we take the expected value of the magnitude of both side of \eqref{eq:822191-2}:
\[\infty>{{\mathbb{E}}_{\xi }}[|\lwrX|]=|{{\chi }_{0}}(\xi )|{{e}^{-|\xi {{|}^{2}}t}}+\int_{0}^{t}{|\xi {{|}^{2}}{{e}^{-|\xi {{|}^{2}}s}}\int_{{{\mathbb{R}}^{d}}}{{{\mathbb{E}}_{\xi }}\left[\left. |\lwrX^{(1)}{{\odot }_{\xi }}\lwrX^{(2)}|\,\right|\,W_1=\eta, T_0=s\right]H(\eta |\xi )d\eta ds}}\]
where $\lwrX^{(k)}=\lwrX^{(k)}(W_k,t-T_0) = {{\mathbf{X}}^{(k)}}(W_k,t-T_0)\mathbbm{1}_{\zeta_{W_k}>t-T_0}$. Using the conditional independence of $\lwrX^{(1)}$ and $\lwrX^{(2)}$ and the linearity of the expectation, we obtain that for a.e.\ $(\eta,s)\in \mathbb{R}^d\times[0,t)$,
\[
\begin{aligned}
&{{{\mathbb{E}}_{\xi }}\left[\left. |\lwrX^{(1)}{{\odot }_{\xi }}\lwrX^{(2)}|\,\right|\,W_1=\eta, T_0=s\right]}
\ge \left|{{{\mathbb{E}}_{\xi }}\left[\left. \lwrX^{(1)}{{\odot }_{\xi }}\lwrX^{(2)}\,\right|\,W_1=\eta, T_0=s\right]}\right|
\\
&=
\left|{{\mathbb{E}}_{{\eta}}}{{\lwrX}^{(1)}}(\eta,t-s)\spr{\xi} {{\mathbb{E}}_{{\xi-\eta}}}{{\lwrX}^{(2)}}(\xi-\eta,s)\right|
%\cdot {{e}_{\xi }}||{{\pi }_{{{\xi }^{\bot }}}}{{\mathbb{E}}_{{\xi-\eta}}}{{\lwrX}^{(2)}}|
=|\chi(\eta,t-s) {{\odot }_{\xi }}\chi(\xi-\eta,t-s) |,
\end{aligned}\]
and thus (\ref{intcond}) follows.

To show $\chi=\mathbb{E}_\xi \lwrX$ satisfies {\eqref{nFNS}}, we take expectation of both sides of \eqref{eq:822191-2} (with $(\xi,t)\in Q$):
\[
\chi(\xi,t)={{\mathbb{E}}_{\xi }}[\lwrX(\xi,t)]={{\chi }_{0}}(\xi ){{e}^{-|\xi {{|}^{2}}t}}+\int_{0}^{t}{|\xi {{|}^{2}}{{e}^{-|\xi {{|}^{2}}s}}\int_{{{\mathbb{R}}^{d}}}{{{\mathbb{E}}_{\xi }}\left[\left. \lwrX^{(1)}{{\odot }_{\xi }}\lwrX^{(2)}\,\right|\,W_1=\eta, T_0=s\right]H(\eta |\xi )d\eta ds}}\,.
\]
Thus, {\eqref{nFNS}} holds in $Q$. 
\end{proof}

\begin{rem}
Note that under conditions of \autoref{819191}, the equation {\eqref{nFNS}} automatically holds at $t=0$ with 
$\mathbb{E}[\mathbf{X}(\xi,0)]=\mathbb{E}[\lwrX(\xi,0)]=\chi_0(\xi)$.
\end{rem}
%
%\begin{rem}
%In the context of classical Leray-Hopf weak solutions (\cites{leray, temam}), the Navier-Stokes solutions belong to $L^2_{\xi}L^{\infty}_t$. If we assume that $\hat{u}:=\chi(\xi,t)h(\xi)\in L^2_{\xi}L^{\infty}_t(\mathbb{R}^d\times[0,T))$ for a $\chi={{\mathbb{E}}_{\xi }}[\mathbf{X}{\mathbbm{1}_{\zeta >t}}]$, we note that 
%$\int_{\mathbb{R}^{d}}|\chi (\eta ,s){{\odot }_{\xi }}{{\chi }}(\xi-\eta ,s)|H(\eta |\xi )d\eta \le 
%{\|\hat{u}\|^2_{L^2_\xi L^{\infty}_t}}({|\xi|h(\xi)})^{-1}$, 
%and thus \eqref{intcond} holds {\em for all} $(\xi,t)\in\mathbb{R}^d\times[0,T)$ for both the scaling-invariant kernel $h_{\text{in}}$ and the Bessel kernel $h_{\text{b}}$ (in dimension $d=3$). Thus, by \autoref{819191}, \eqref{nFNS}$_{\chi_0}$ is satisfied {\em everywhere} on $\mathbb{R}^d\times[0,T)$, i.e. $\hat{u}=\chi h$ is a mild solution of the Navier-Stokes Equations: it satisfies \eqref{FNS}$_{v_0}$ (with $v_0=\chi_0h$) everywhere $\mathbb{R}^d\setminus\{0\}\times[0,T)$. Moreover, if $\chi_0$ is continuous on $\mathbb{R}^d\setminus\{0\}$, then $\hat{u}$ is also continuous on $\mathbb{R}^d\setminus\{0\}\times[0,T)$. 
%\end{rem}

At this point, given the initial data, one can construct solutions to \eqref{nFNS} in two different ways: by either \eqref{eq:822191} or \eqref{eq:511211}. We refer to the two solutions  as \emph{cascade solutions} due to the nature of the constructions. A natural question is whether they coincide with each other. Under mild assumptions on the integrability, it will be shown that they indeed coincide with each other and are given as the pointwise limit of the Picard's iteration
\begin{equation}\label{eq:71211}{{\chi}^{(0)}}=0,\ \ \ {{\chi}^{(n+1)}}={U}(\chi_0)+\mathfrak{B}({{\chi}^{(n)}},{{\chi}^{(n)}})
\end{equation} 
where ${U}(w)=e^{-t|\xi|^2}w$ and 
\begin{equation}\label{eq:71212}\mathfrak{B}(f,g)=\int_{0}^{t}{{{e}^{-s|\xi {{|}^{2}}}}|\xi {{|}^{2}}\int_{{{\mathbb{R}}^{d}}}{f (\eta ,t-s)\odot_{\xi}g (\xi -\eta ,t-s)H(\eta |\xi )d\eta ds }}.
\end{equation}

\begin{thm}\label{531211}
Let $\chi_0:\mathbb{R}^d\to\mathbb{C}^d$ be a measurable function and $T\in[0,\infty]$. Let $\mathbf{X}$ and $\bar{\mathbf{X}}$ be the stochastic processes defined by \eqref{eq:822191} and \eqref{eq:511211} respectively. Let $\lwrX(\xi,t)=\mathbf{X}(\xi,t)\mathbbm{1}_{\zeta>t}$. We have the following statements.
\begin{enumerate}
\item[\textup{(a)}] If $\mathbb{E}_\xi|\bar{\mathbf{X}}|<\infty$ a.e.\ on $\mathbb{R}^d\times[0,T)$ then the thinned cascade solution of {\eqref{nFNS}} is equal to the pointwise limit of the Picard's iteration \eqref{eq:71211}.
\item[\textup{(b)}] If $\mathbb{E}_\xi|\bar{\mathbf{X}}|<\infty$ a.e.\ on $\mathbb{R}^d\times[0,T)$ then the minimal cascade solution of {\eqref{nFNS}} is equal to the pointwise limit of the Picard's iteration \eqref{eq:71211}.
\item[\textup{(c)}] $\mathbb{E}_\xi |\uprX| =\mathbb{E}_\xi|\lwrX|\in[0,\infty]$ a.e.\ in $\mathbb{R}^d\times[0,\infty)$,
and therefore,  $\mathbb{E}_\xi \uprX =\mathbb{E}_\xi \lwrX$ a.e.\ in $\mathbb{R}^d\times[0,T)$ whenever one of the expectations is finite.
\end{enumerate}
\end{thm}
%\begin{rem}
%It is straightforward from the theorem that if the expected values $\mathbb{E}_\xi|\bar{\mathbf{X}}|$ and $\mathbb{E}_\xi[|{\mathbf{X}}|\mathbbm{1}_{\zeta> t}]$ are both finite a.e.\ on $\mathbb{R}^d\times(0,T)$ then the two cascade solutions coincide with each other.
%\end{rem}
\begin{proof}%[Proof of \autoref{531211}]
The proof of Part (a) can be found in \cite[Prop.\ 4.3]{rabi}. For the sake of completeness, we include a brief version of Bhattacharya et al.'s proof as follows. First, one truncates the recursion \eqref{eq:511211} by introducing $\bar{\mathbf{X}}_n=\bar{\mathbf{X}}\mathbbm{1}_{G(\xi,t)<n}$ where
\begin{eqnarray*}G(\xi,t)=&\sup&\left\{|v|:\,v\in\mathbb{T},\prod\limits_{j=0}^{|v|-1}{{{K}_{v|j}}}=1,\,\sum_{j=0}^{|v|-1}T_{v|j}<t,~\text{and}\right.\\
&&\left.\text{either}~K_v=0~\text{or}~\left(K_v=1,\,\sum_{j=0}^{|v|}T_{v|j}\ge t\right)\right\}.\end{eqnarray*}
%and $\{K_v\}_{v\in\mathbb{T}}$ is an i.i.d.\ family of Bernoulli($1/2$) random variables. 
Intuitively, ${G}(\xi,t)$ is the genealogical height of the (finite) tree in \autoref{cascade}. Thanks to the inherited property $\mathbbm{1}_{{G}(\xi,t)<n}\mathbbm{1}_{T_0<t}=\mathbbm{1}_{{G}({W_1},t-T_0)<n-1}\mathbbm{1}_{{G}({W_2},t-T_0)<n-1}\mathbbm{1}_{T_0<t}$, the sequence $\bar{\mathbf{X}}_n$ satisfies a truncated version of \eqref{eq:511211}:
\begin{equation}\label{526212}\begin{array}{*{35}{l}}
   {\bar{\mathbf{X}}_{0}}= 0,\ {\bar{\mathbf{X}}_{n}}(\xi ,t)=\left\{ \begin{array}{*{35}{l}}
   {{\chi }_{0}}(\xi ) & \text{if} & {{T}_{0}}\ge t,  \\
      0 & \text{if} & {{T}_{0}}< t,\,K_0=0,  \\
   2\bar{\mathbf{X}}_{n-1}^{(1)}({{W}_{1}},t-{{T}_{0}})\odot_\xi\bar{\mathbf{X}}_{n-1}^{(2)}(W_2,t-{{T}_{0}}) & \text{if} & {{T}_{0}}< t,\,K_0=1.  \\
\end{array} \right.  \\
\end{array}\end{equation}
%For each $\omega\in\bar{\Omega}$, the sequence $\{\bar{\mathbf{X}}_n(\xi,t,\omega)\}$ is eventually constant and $\lim \bar{\mathbf{X}}_n(\xi,t,\omega)=\bar{\mathbf{X}}(\xi,t,\omega)$. 
%By Lebesgue's Monotone Convergence Theorem, $\bar{\chi}:=\mathbb{E}_\xi\bar{\mathbf{X}}= \lim\mathbb{E}_\xi\bar{\mathbf{X}}_n$. 
By conditioning on $T_0$, $K_0$, $W_1$, $W_2$, we obtain
\begin{equation*}\mathbb{E}_{\xi}\bar{\mathbf{X}}_n={{e}^{-t|\xi {{|}^{2}}}}{{\chi }_{0}}(\xi)+
\int_{0}^{t}{{{e}^{-s|\xi {{|}^{2}}}}|\xi {{|}^{2}}\int_{{{\mathbb{R}}^{d}}}{\mathbb{E}_{\eta}\bar{\mathbf{X}}_{n-1}(\eta,t-s)\odot_\xi\mathbb{E}_{\xi -\eta}\bar{\mathbf{X}}_{n-1}(\xi -\eta,t-s)H(\eta |\xi )d\eta ds. }}\end{equation*}
Since the thinned tree is non-explosive,  $\bar{\mathbf{X}}_n\to \bar{\mathbf{X}}$ a.s. Note that we also have $|\bar{\mathbf{X}}_n|\le|\bar{\mathbf{X}}|$, and therefore, by Lebesgue's Dominated Convergence Theorem, the sequence $\bar{\chi}^{(n)}:=\mathbb{E}_\xi\bar{\mathbf{X}}_n$ converges pointwise to $\bar\chi:=\mathbb{E}_\xi\bar{\mathbf{X}}$ and satisfies \eqref{eq:71211}.
%\begin{equation}
%\label{thinnedpicard}
%\bar{\chi}^{(n)}= \mathfrak{U} +\mathfrak{B}({\bar{\chi}^{(n-1)}},{\bar{\chi}^{(n-1)}}).
%\end{equation}
%Thus, for all $n$, $\bar{\chi}^{(n)} = \chi^{(n)}$ and consequently $\bar{\chi} = \chi.$

For Part (b), one truncates the recursion \eqref{eq:822191} by introducing ${\lwrX}_n={\mathbf{X}}\mathbbm{1}_{\zeta_{\xi,n}> t}$ where $\zeta_{\xi,n}=\min_{|v|=n-1}\sum_{j=0}^{n-1}T_{v|j}$ with the convention that $\zeta_{\xi,0}=0$. Intuitively, $\{\zeta_{\xi,n}>t\}$ is the event that every path of the cascade in \autoref{cascade} crosses the horizon $t$ after less than $n$ times of branching. 
%The sequence $\{|\mathbf{X}_n|\}$ is increasing sequence with $\lim\mathbf{X}_n=\mathbf{X}\mathbbm{1}_{\zeta>t}$. By Lebesgue's Monotone Convergence Theorem and the negligibility of $\{\zeta=t\}$, $\lim\mathbb{E}_\xi{\mathbf{X}}_n={\chi}$. 
Note that in the explosive event $[\zeta<t]$, $\lwrX_n=0=\lwrX$ for all $n$. In the event of no explosion, i.e. $\zeta>t$, the corresponding cascade tree is finite and therefore $\lwrX_n=\lwrX$ for big enough $n$ (depending on the outcome $\omega$). Thus,  a.s.\ $\lim\lwrX_n=\lwrX=\mathbf{X}\mathbbm{1}_{\zeta>t}$ and $|\lwrX_n|\le|\mathbf{X}|\mathbbm{1}_{\zeta>t}$. By Lebesgue's Dominated Convergence Theorem, ${\chi}:=\mathbb{E}_\xi[{\mathbf{X}}\mathbbm{1}_{\zeta> t}]=\lim\mathbb{E}_\xi\lwrX_n$. By the inherited property $\mathbbm{1}_{\zeta_{\xi,n}>t}\mathbbm{1}_{T_0<t}=\mathbbm{1}_{\zeta_{W_1,n-1}>t-T_0}\mathbbm{1}_{\zeta_{W_2,n-1}>t-T_0}\mathbbm{1}_{T_0<t}$, the sequence ${\mathbf{X}}_n$ satisfies a truncated version of \eqref{eq:822191}:
\begin{equation}\label{526211}\begin{array}{*{35}{l}}
   {\lwrX_{0}}(\xi ,t)= 0,\ \ {\lwrX_{n}}(\xi ,t)=\left\{ \begin{array}{*{35}{l}}
   {{\chi }_{0}}(\xi ) & \text{if} & {{T}_{0}}\ge t,  \\
   \lwrX_{n-1}^{(1)}({{W}_{1}},t-{{T}_{0}})\odot_\xi\lwrX_{n-1}^{(2)}(W_2,t-{{T}_{0}}) & \text{if} & {{T}_{0}}< t.  \\
\end{array} \right.  \\
\end{array}\end{equation}
By conditioning on $T_0$, $W_1$, $W_2$, one arrives at
\begin{equation*}\mathbb{E}_{\xi}{\lwrX}_n={{e}^{-t|\xi {{|}^{2}}}}{{\chi }_{0}}(\xi)+\int_{0}^{t}{{{e}^{-s|\xi {{|}^{2}}}}|\xi {{|}^{2}}\int_{{{\mathbb{R}}^{d}}}{\mathbb{E}_{\eta}{\lwrX}_{n-1}(\eta ,t-s)\odot_\xi\mathbb{E}_{\xi -\eta}{\lwrX}_{n-1}(\xi-\eta,t-s)H(\eta |\xi )d\eta ds. }}\end{equation*}
Thus, $\chi^{(n)}=\mathbb{E}_\xi\lwrX_n$ satisfies
\eqref{eq:71211}
as claimed and converges pointwise to $\chi$. 

For Part (c), observe that on the explosion event $\{\zeta<t\}$, $\uprX=\lwrX=0$. On the non-explosion event $\{\zeta>t\}$, let $N=\mathrm{card}\,V(\xi,t)<\infty$. We either have $\uprX=2^{N-1}\lwrX$ if all the coin tosses $K_w=1$ for all ancestors $w$ of the vertices in $V(\xi,t)$, or otherwise, $\uprX=0$. Because the number of such ancestors is exactly $N-1$ and that the coin tosses are i.i.d.\ Bernoulli random variables, conditioning on the number of leaves we obtain:
\[
\begin{aligned}
\mathbb{E}_\xi|\uprX|&=
\mathbb{E}_\xi[|\uprX|\mathbbm{1}_{\zeta>t}]=\sum\limits_{n=1}^{\infty} \,\mathbb{E}_\xi\left[\left. 2^{n-1}|\lwrX|\,\right|\, N=n\right]
\left(\frac{1}{2}\right)^{n-1}\mathbb{P}_{\xi}(N=n)=\\
&=\sum\limits_{n=1}^{\infty} \,\mathbb{E}_\xi\left[\left. |\lwrX|\,\right|\, N=n\right]
\mathbb{P}_{\xi}(N=n)
=\mathbb{E}_\xi|\lwrX|.
\end{aligned}
\]  
As a consequence of Parts (a) and (b), if $\mathbb{E}_\xi |\lwrX|<\infty$ or $\mathbb{E}_\xi |\uprX|<\infty$ a.e. on $\mathbb{R}^d\times[0,T)$ for some $T>0$, then 
$\mathbb{E}_\xi \lwrX= \mathbb{E}_\xi \uprX$ a.e. on $\mathbb{R}^d\times[0,T)$. 
\end{proof}

\begin{rem} While, when defined, the minimal solution $\mathbb{E}_\xi \lwrX$ coincides with the thinned solution $\mathbb{E}_\xi \uprX$, the presence of the factor $2$ in front of the $\spr{}$ in \eqref{eq:511211} translates into a factor of $2^{N-1}$ difference between $\lwrX$ and $\uprX$ in the proof above. 
In practice, this inflation restricts the size of $\chi_0$ for which integrability of the thinned solution can be established, effectively narrowing the set of initial data for which well-posedness can be shown. To improve the estimates on the initial data, it is more advantageous to use the minimal solution.
\end{rem} 

\section{Majorizing principle and symmetry}\label{reduction}
We begin this section by establishing a relation between the explosion time defined in \eqref{explosiontime} and a toy model of the Navier-Stokes equations introduced by Montgomery-Smith in \cite{smith}.    We first note that the probability of non-explosion $\rho(\xi,t) = \mathbb{E}_\xi[\mathbbm{1}_{\zeta> t}]$ is a solution for an integral equation that has the constant function $1$ also as a solution. Recall that in the construction of the branching process as defined in (i), (ii) and (iii) of \autoref{introduction}, the distribution of the wave vector $W_{v1}$ given $W_v=\xi$ is $H(\eta|\xi)=h(\eta)h(\xi-\eta)/(|\xi|h(\xi))$ where $h$ is a positive function satisfying $h*h(\xi) = |\xi| h(\xi)$.  We have
\begin{prop}
\label{MSviaExplosion}
Assume that $\{T_v\}_{v\in\mathbb{T}} and \{W_v\}_{v\in\mathbb{T}}$ are defined as in (i),(ii) and (iii) of \autoref{introduction}. Let $\rho(\xi,t) = \mathbb{E}_\xi[\mathbbm{1}_{\zeta> t}]$.  Then
\begin{equation}
\label{explosionequation}
\rho(\xi,t) = e^{-|\xi|^2 t} + \int_0^t |\xi|^2 e^{-|\xi|^2 s} \int_{\mathbb{R}^d} \rho(\eta,t-s)\rho(\xi-\eta,t-s) H(\eta|\xi) d\eta ds
\end{equation}
and consequently, $\rho$ is continuous in time.
\end{prop}
\begin{proof}
This follows similar lines as the proof of \autoref{819191}.  Note that $Y(\xi,t)=\mathbbm{1}_{\zeta_\xi>t}$ satisfies
\begin{equation}\label{pr_no_expl}
Y(\xi,t)=\left\{\begin{array}{ll}
1, & T_0\ge t,\\
Y^{(1)}(\eta, t-T_0)Y^{(2)}(\xi-\eta,t-T_0),\quad & T_0<t,
\end{array}
\right.
\end{equation}
where $Y^{(1)}$ and $Y^{(2)}$ are two conditionally independent copies on $Y$.
Indeed, this follows from the inclusion $[T_0\ge t]\subset[\zeta> t]$ and from inherited nature of the non-explosion event: 
${\mathbbm{1}_{\zeta> t}}\mathbbm{1}_{T_0<t}=\mathbbm{1}_{T_0<t}\left(\mathbbm{1}_{{{\zeta}_{{{W}_{1}}}}> t-T_0}+\mathbbm{1}_{{{\zeta}_{{{W}_{2}}}}> t-T_0}\right)$.
Since $|Y|\le1$, we can take expectation in \eqref{pr_no_expl} to obtain that $\rho=\mathbb{E}Y$ satisfies \eqref{explosionequation}.
%\[{{\mathbb{E}}_{\xi }}[{\mathbbm{1}_{\zeta> t}}]=\underbrace{{{\mathbb{E}}_{\xi }}[{\mathbbm{1}_{\zeta> t}}{\mathbbm{1}_{{{T}_{0}}\ge t}}]}_{\{1\}}+\underbrace{{{\mathbb{E}}_{\xi }}[{\mathbbm{1}_{\zeta> t}}{\mathbbm{1}_{{{T}_{0}}< t}}]}_{\{2\}}.\]
%Since $\{T_0\ge t\}\subset\{\zeta> t\}$,
%$\{1\}={{\mathbb{E}}_{\xi }}[{\mathbbm{1}_{{{T}_{0}}\ge t}}]={{e}^{-|\xi {{|}^{2}}t}}$. 
%By conditioning on $T_0$, $W_1$, $W_2$, 
%and using the conditional independence of sub-cascades given
%$W_1, W_2$,  one can rewrite \{2\} as
%\begin{eqnarray*}
%\{2\}&=&{{\mathbb{E}}_{\xi }}[\mathbb{E}[{\mathbbm{1}_{\zeta> t}}\mathbbm{1}_{T_0<t}|{{T}_{0}},{{W}_{1}},W_2]]\\
%&=&{{\mathbb{E}}_{\xi }}[\mathbb{E}[{\mathbbm{1}_{{{\zeta}_{{{W}_{1}}}}> t-T_0}}{\mathbbm{1}_{{{\zeta}_{W_2}}> t-{{T}_{0}}}}\mathbbm{1}_{T_0<t}|{{T}_{0}},{{W}_{1}},W_2]]\\
%&=&{{\mathbb{E}}_{\xi }}[\mathbbm{1}_{T_0<t}\mathbb{E}_{W_1}[\mathbbm{1}_{{{\zeta}_{{{W}_{1}}}}> t-T_0}]
%\mathbb{E}_{W_2}[{\mathbbm{1}_{{{\zeta}_{W_2}}> t-{{T}_{0}}}}]]\\
%&=&{{\mathbb{E}}_{\xi }}[\mathbbm{1}_{T_0<t}\rho ({{W}_{1}},t-{{T}_{0}})\rho (W_2,t-{{T}_{0}})]\\
%&=&\int_{0}^{t}{|\xi {{|}^{2}}{{e}^{-|\xi {{|}^{2}}s}}\int_{{{\mathbb{R}}^{d}}}{\rho(\eta ,t-s)\rho (\xi -\eta ,t-s)H(\eta |\xi )d\eta ds}}.
%\end{eqnarray*}
%}
The continuity of $\rho$ follows immediately from \eqref{explosionequation} since $\int_{\mathbb{R}^3} \rho(\eta,t-s)\rho(\xi-\eta,t-s) H(\eta|\xi) d\eta\leq 1.$
\end{proof}
Note that the constant function $1$ is a trivial solution of \eqref{explosionequation}.  Consequently, determining whether stochastic explosion occurs (i.e.\ $\rho(\xi,t) < 1$ for some $\xi,t$) is equivalent to determining whether \eqref{explosionequation} has other solutions besides the trivial solution.  %Furthermore, after multiplication by $h(\xi)$ we obtain
%\begin{equation}
%\label{hequation}
%h(\xi) = e^{-|\xi|^2 t} h(\xi) + \int_0^te^{-|\xi|^2 s} |\xi| \int_{\mathbb{R}^3} h(\eta)h(\xi-\eta) d\eta ds.
%\end{equation}
%showing that $h(\xi)$ is a steady state solution of the equation introduced in \cite{smith}, which we now review.

In 2001, Montgomery-Smith introduced a toy model of the Navier-Stokes equations, called the ``cheap Navier-Stokes equation'', which has the same scaling symmetry as the Navier-Stokes equations and for which he constructed a finite-time blowup solution \cite{smith}:
%Following ideas in \cite{lejan} and \cite{3dexplosion}, it is possible to control the size of the cascade solution of \eqref{nFNS} by the cascade solution of 
%a scalar equation whose representation in physical domain is: 
\begin{equation}\label{MS}\tag{MS}\left\{ \begin{array}{*{35}{rclcl}}
   {{\partial }_{t}}u-\Delta u & = & \sqrt{-\Delta }({{u}^{2}}) & \text{in} & {{\mathbb{R}}^{d}}\times (0,\infty ),  \\
   u(\cdot ,0) & = & {{u}_{0}} & \text{in} & {{\mathbb{R}}^{d}} . \\
\end{array} \right.\end{equation}
%\[(\text{MS}):{{\partial }_{t}}u-\Delta u = \sqrt{-\Delta }({{u}^{2}}),\ x\in\mathbb{R}^d,\, t>0.\]
%We will refer to it as \emph{majorizing Navier-Stokes equation}. 
Here $u$ and $u_0$ are scalar valued functions. %\eqref{MS} and \eqref{NS} have the same scaling symmetry. 
%Although it is not known if a smooth, rapidly
%decaying initial datum could produce a blowup solution to \eqref{NS}, Montgomery-Smith showed that it is the case for (mNS). 
The Fourier transform of \eqref{MS} can be obtained by simply replacing the circle-dot product in \eqref{FNS} with the regular product (of numbers):
\begin{equation}\label{FMS}
\tag{FMS} {v}=U({v}_0)+\widetilde{B}({v},{v})
\end{equation}
where $v=\mathscr{F}\{u\}$, $v_0=\mathscr{F}\{u_0\}$, $U(v_0)=e^{-t|\xi|^2}v_0$ and
\begin{equation}\label{eq:71213}
\widetilde{B}(f,g)=c_0\int_{0}^{t}{{{e}^{-s|\xi {{|}^{2}}}}|\xi |\int_{{{\mathbb{R}}^{d}}}{f(\eta ,t-s)g(\xi -\eta ,t-s)d\eta ds}}.
\end{equation}
Note that $h(\xi)$ is a steady state solution of \eqref{FMS}. With a similar normalization $\psi=c_0{{v}}/{h}$ and $\psi_0=c_0{{v_0}}/{h}$, where $h$ is a standard majorizing kernel,
%and the initial data ${v}_0=h|\psi_0|/c_0$ for a $\mathbb{C}^d$-valued function $\psi_0$
\eqref{FMS} becomes
%\begin{equation}\label{eq:71216}\textrm{\eqref{nFMS}}_{|\chi_0|}:~~~\chi=U(|\chi_0|)+\widetilde{\mathfrak{B}}(\chi,\chi)\end{equation}
\begin{equation}\label{nFMS}\tag{nFMS}\psi (\xi ,t)={{e}^{-t|\xi {{|}^{2}}}}{{\psi }_{0}}(\xi)+\int_{0}^{t}{{{e}^{-s|\xi {{|}^{2}}}}|\xi {{|}^{2}}\int_{{{\mathbb{R}}^{d}}}{\psi (\eta ,t-s)\psi (\xi -\eta ,t-s)H(\eta |\xi )d\eta ds }}.\end{equation}
We will sometimes write \eqref{nFMS}, \eqref{nFNS},... as \eqref{nFMS}$_{\psi_0}$, \eqref{nFNS}$_{\chi_0}$,... to emphasize the initial data. The connection with the stochastic explosion is now clear
: since both $1$ and, by \autoref{MSviaExplosion}, the probability of non-explosion $\rho(\xi,t)=\mathbb{E}_\xi[\mathbbm{1}_{\zeta>t}]$ are solutions of the equation $\eqref{nFMS}_1$, stochastic explosion does not occur if and only if there is a unique solution to $\eqref{nFMS}_1$ satisfying $0\le\chi(\xi,t)\le1$ for all $\xi,t$.  In this sense, the equation introduced by Montgomery-Smith plays a central role in the study of stochastic explosion for the Navier Stokes equations.  In \autoref{nonuniqueness},  we illustrate the nonuniqueness of solutions in the case $h(\xi)=1/(\pi^3|\xi|^2)$ (see \cite{part2}) and show how the cascade solution can be used to show blow up of solutions of \eqref{FMS}.  This current section is devoted to strengthening the comparison principle between \eqref{nFMS} and \eqref{nFNS} first established in \cite{lejan} by exploiting the symmetrized $\odot$ product and some symmetry properties of the solution process. 

%With a slight abuse of notation, we may write \eqref{nFMS}$_{|\chi_0|}$ instead of \eqref{nFMS}$_{\chi_0}$ because only the magnitude of $\chi_0$ is concerned. 
With this purpose in mind, we only consider initial data $\psi_0(\xi)\ge 0$ for all $\xi$. In particular, when comparing \eqref{nFNS} with \eqref{nFMS}, we will be interested in the case $\psi_0=|\chi_0|/2$. One can associate the following stochastic processes with \eqref{nFMS}$_{\psi_0}$.
%Let $\chi=c_0{\hat{u}}/{h}$, where $c_0=(2\pi)^{-d/2}$, and $h=h(\xi)>0$ be a standardized majorizing kernel. The Fourier transform of \eqref{MS} can be written as
%Here $\hat{f}(\xi )=c_0\int_{\mathbb{R}^d}{f(x){{e}^{-ix\cdot \xi }}dx}$.
%The corresponding stochastic multiplicative functional $\mathbf{X}$ is defined recursively as
\begin{itemize}
\item Without thinning:
\begin{equation}\label{eq:819191}\mathbf{Y}(\xi,t)=\left\{ \begin{array}{*{35}{l}}
   \psi_0(\xi) & \text{if} & {{T}_{0}}\ge t,  \\
   {\mathbf{Y}}^{(1)}(W_1, t-{{T}_{0}}){\mathbf{Y}}^{(2)}(W_2, t-{{T}_{0}}) & \text{if} & {{T}_{0}}< t.  \\
\end{array} \right.\end{equation}
\item With thinning: 
\begin{equation}\label{eq:522211}\bar{\mathbf{Y}}(\xi,t)=\left\{ \begin{array}{*{35}{l}}
   \psi_0(\xi) & \text{if} & {{T}_{0}}\ge t,  \\
   0 & \text{if} & {{T}_{0}}< t,\,K_0=0,  \\
   2\bar{\mathbf{Y}}^{(1)}(W_1, t-{{T}_{0}})\bar{\mathbf{Y}}^{(2)}(W_2, t-{{T}_{0}}) & \text{if} & {{T}_{0}}< t,\,K_0=1.  \\
\end{array} \right.\end{equation}
\end{itemize}
We will write $\mathbf{Y}_{\psi_0}$ and $\bar{\mathbf{Y}}_{\psi_0}$ if reference to the initial data is needed. Observe that \eqref{nFMS}$_{\psi_0}$ and \eqref{nFNS}$_{\chi_0}$ have the same underlying stochastic structure, namely the couple $(\{W_v\},\{T_v\})$ in case of non-thinning and the triple $(\{W_v\},\{T_v\},\{K_v\})$ in case of thinning. The minimal solution process  $\lwrY={\mathbf{Y}}\mathbbm{1}_{\zeta> t}$ and the thinned solution process $\bar{\mathbf{Y}}$ are well-defined. 
One can likewise obtain cascade solutions to \eqref{nFMS}$_{\psi_0}$ from the recursions \eqref{eq:819191} and \eqref{eq:522211}, referred to as minimal cascade solution $\psi=\mathbb{E}_\xi\lwrY= \mathbb{E}_\xi[{\mathbf{Y}}\mathbbm{1}_{\zeta> t}]$ and thinned cascade solution $\bar{\psi}=\mathbb{E}_\xi\bar{\mathbf{Y}}$ in the analogy with \eqref{nFNS}$_{\chi_0}$. Similarly to \eqref{nFNS}, it can be shown
that these solutions are equal to each other and given by the pointwise limit of the Picard's iteration 
\begin{equation}\label{eq:71217}
{{\psi}^{(0)}}=0,\ \ \ {{\psi}^{(n+1)}}=U(\psi_0)+\widetilde{\mathfrak{B}}({{\psi}^{(n)}},{{\psi}^{(n)}}),
\end{equation}
where
\begin{equation}\label{eq:71216}
\widetilde{\mathfrak{B}}(f,g)=c_0\int_{0}^{t}{{{e}^{-s|\xi {{|}^{2}}}}|\xi {{|}^{2}}\int_{{{\mathbb{R}}^{d}}}{f (\eta ,t-s)g (\xi -\eta ,t-s)H(\eta |\xi )d\eta ds }}.
\end{equation}
%The inequality $|a\odot_\xi b|\le |a||b|$ provides a natural way to bound solutions of \eqref{nFNS} by solutions of \eqref{nFMS}. 
The simplified product allows simple methods to control the solution of \eqref{nFMS}$_{\psi_0}$ which are not available for \eqref{nFNS}$_{\chi_0}$, for example the comparison principle. The inequality $|a\odot_\xi b|\le \frac{1}{2}|a||b|$ in turn provides a natural way to bound the solution of \eqref{nFNS}$_{\chi_0}$ by the solution of \eqref{nFMS}$_{|\chi_0|/2}$. Le Jan and Sznitman showed that $\bar{\psi}$ is a solution of \eqref{nFMS}$_{|\chi_0|}$ and that $|\bar{\chi}|\le \bar{\psi}$, where $\bar{\chi}=\mathbb{E}_\xi\bar{\mathbf{X}}$ is the thinned cascade solution of \eqref{nFNS}$_{\chi_0}$. 
 This is known as a majorizing principle.
 Note the absence of the factor $1/2$ in the initial data comparison: $\psi_0=|\chi_0|$ in Le Jan-Sznitmann's approach and
$\psi_0=|\chi_0|/2$ in our approach (shown below). This improvement is due to our use of the symmetrized product $\spr{}$ compared to the non-symmetrized product $\otimes$ used in \cite{lejan}. Such an improvement leads to well-posedness results of \eqref{FNS} for larger initial data. 
%We also aim to expand the majorizing principle to the minimal cascade solution settings for \eqref{nFMS}$_{|\chi_0|/2}$ as follows.
% shows that the two cascade solutions to \eqref{nFMS}$_{|\chi_0|}$ are equal to each other. Moreover, it shows that if the cascade solution to \eqref{nFMS}$_{|\chi_0|}$ is finite almost everywhere in $Q_T=\mathbb{R}^d\times(0,T)$ then the two cascade solutions to \eqref{nFNS}$_{\chi_0}$ coincide each other.

%The recursion \eqref{eq:522211} corresponds to a critical Galton-Watson process with offspring distribution $p_k=1/2$, for $k=0$ or $2$, and thus terminates after finitely many steps. The value $1/2$ is not essential for the termination: one only needs $\mathbb{P}(K_0=1)=p\in(0,1/2]$ (and factor 2 on the RHS\eqref{eq:522211} be replaced by $1/p$) to obtain a subcritical/critical Galton-Watson process. 

%
%{\textcolor{red}{WARNING: THERE ARE REFERENCES in the paper to parts of the following Proposition using the old labeling.   To find this references, search for 69211 and check if it correctly identifies the appropriate part of the proposition, or change the reference all together.  The reference to the old part (c) is now labeled {MSviaExplosion}}}

\begin{prop}\label{69211}
Let $\chi_0:\mathbb{R}^d\to\mathbb{C}^d$ be a measurable function and let $\psi_0=|\chi_0|/2$. We have the following statements.
\begin{enumerate}[(a)]
\item The function $\psi(\xi,t)=\mathbb{E}_\xi\lwrY=\mathbb{E}_\xi[{\mathbf{Y}}\mathbbm{1}_{\zeta> t}]\in [0,\infty]$ satisfies {\eqref{nFMS}}$_{\psi_0}$ \textup{(}with the convention $0\cdot\infty=0$\textup{)}. Moreover, $\psi=\bar{\psi}$ and is given as a pointwise limit of the Picard's iteration \eqref{eq:71217}.
\item For each $\xi\neq 0$, $e^{t|\xi|^2}\psi$ is a nondecreasing function of $t$.
%\item %If $|\chi_0(\xi)|=1$ for all $\xi$ then 
%{\textcolor{red}{$\rho(\xi,t)=\mathbb{E}_\xi(\mathbbm{1}_{\zeta> t})$ is continuous in $t$. Consequently, $[\zeta=t]$ is a null event for each $t$.}}
\item \textup{(Majorizing principle)}
%Let $\chi_{\text{nFNS}}=\mathbb{E}_\xi[\mathbf{X}_{\text{nFNS}}\mathbbm{1}_{\zeta>t}]$. 
Suppose that for some $T\in(0,\infty]$, $\psi<\infty$ a.e.\ on $\mathbb{R}^d\times(0,T)$. Then there exists a subset $D\subset \mathbb{R}^d$ with full measure such that both $\mathbb{E}_\xi|\bar{\mathbf{X}}|$ and $\mathbb{E}_\xi[|{\mathbf{X}}|\mathbbm{1}_{\zeta>t}]$ are finite on $D_T=D\times[0,T)$. 
Moreover, the cascade solutions to {\eqref{nFNS}}$_{\chi_0}$ are well-defined on $D_T$, equal to each other \textup{(}denoted by $\chi$\textup{)}, given by the pointwise limit of the Picard's iteration \eqref{eq:71211}, and satisfy $|\chi|\le 2\psi$ on $D_T$.
\item \textup{(Comparison principle of nFMS)} 
%Let $\mathbf{Y}$ be the stochastic functional given by \eqref{eq:819191} with initial data $|\psi_0|$. 
%Let $\psi=\mathbb{E}_\xi[\mathbf{Y}\mathbbm{}]$
Let $\phi_0,\gamma_0:\mathbb{R}^d\to[0,\infty]$ be measurable functions, and let $\phi$ and $\gamma$ be the minimal cascade solutions to {\eqref{nFMS}}$_{\phi_0}$ and {\eqref{nFMS}}$_{\gamma_0}$, respectively. If $\phi_0\le\gamma_0$ on $\mathbb{R}^d$ then $\phi\le\gamma$ on $\mathbb{R}^d\times(0,\infty)$.
\end{enumerate}
\end{prop}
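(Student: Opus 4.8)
The plan is to transport the arguments of \autoref{819191} and \autoref{531211} to the scalar, nonnegative setting of \textup{(nFMS)}, where the finiteness hypotheses become unnecessary because one may work in $[0,\infty]$ and replace dominated convergence by monotone convergence. For (a), I would truncate \eqref{eq:819191} by $\mathbf{Y}_n=\mathbf{Y}\mathbbm{1}_{\zeta_{\xi,n}>t}$; since $\mathbf{Y}\ge 0$ and $\zeta_{\xi,n}\uparrow\zeta$ we get $\mathbf{Y}_n\uparrow\mathbf{Y}\mathbbm{1}_{\zeta>t}$, hence $\mathbb{E}_\xi\mathbf{Y}_n\uparrow\psi$ by monotone convergence, while the inherited property shows $\mathbf{Y}_n$ satisfies a truncated version of \eqref{eq:819191}. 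Conditioning on $T_0,W_1,W_2$ and using conditional independence of the sub-cascades gives $\mathbb{E}_\xi\mathbf{Y}_n=\psi^{(n)}$ by induction, so $\psi^{(n)}\uparrow\psi$, and one more monotone-convergence passage to the limit in the recursion (the convention $0\cdot\infty=0$ being invoked precisely where $\psi=\infty$) shows $\psi$ solves \textup{(nFMS)}$_{|\chi_0|}$. For the identification $\psi=\bar\psi$: the truncations $\bar{\mathbf{Y}}_n=\bar{\mathbf{Y}}\mathbbm{1}_{G(\xi,t)<n}$ satisfy the analog of \eqref{526212}, and since the critical Galton-Watson tree determined by $\{K_v\}$ is a.s.\ finite, $\mathbb{E}_\xi\bar{\mathbf{Y}}_n\uparrow\bar\psi$; conditioning on $T_0,K_0,W_1,W_2$ (the factor $2$ absorbing $\mathbb{P}(K_0=1)=1/2$) shows $\mathbb{E}_\xi\bar{\mathbf{Y}}_n$ also obeys \eqref{eq:71217}, hence equals $\psi^{(n)}$, and letting $n\to\infty$ gives $\psi=\bar\psi$. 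Part (b) I would obtain by multiplying \textup{(nFMS)}$_{|\chi_0|}$ by $e^{t|\xi|^2}$ and substituting $r=t-s$, which displays $e^{t|\xi|^2}\psi(\xi,t)$ as $|\chi_0(\xi)|$ plus a $t$-integral of a nonnegative integrand (equivalently, $e^{t|\xi|^2}\psi^{(n)}$ is nondecreasing in $t$ for each $n$, and one passes to the limit).

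For (c), when $|\chi_0|\equiv 1$ the recursion \eqref{eq:819191} terminates on $\{\zeta>t\}$ after finitely many branchings, and $\mathbf{Y}(\xi,t)$ is then a product of factors $|\chi_0(W_v)|=1$, so $\mathbf{Y}\mathbbm{1}_{\zeta>t}=\mathbbm{1}_{\zeta>t}$ and $\psi(\xi,t)=\mathbb{P}_\xi(\zeta>t)$. This function is automatically nonincreasing and right-continuous in $t$, so continuity reduces to showing $\zeta$ has no atoms; from the definition of $\zeta$ one reads off the self-similar identity $\zeta_\xi=T_0+\min(\zeta^{(1)},\zeta^{(2)})$, where conditionally on $W_1,W_2$ the $\zeta^{(i)}$ are independent copies of the explosion time started from $W_i$, and $T_0$ is independent of this pair with an atomless (exponential) law. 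Conditioning on $\min(\zeta^{(1)},\zeta^{(2)})$ then yields $\mathbb{P}_\xi(\zeta=t)=0$ for every $t$, which gives both the continuity of $\psi(\xi,\cdot)$ and the nullity of $[\zeta=t]$.

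For (d), the pathwise estimates $|\mathbf{X}|\le\mathbf{Y}$ and $|\bar{\mathbf{X}}|\le\bar{\mathbf{Y}}$, proved by induction on the truncations from $|a\odot_\xi b|\le|a||b|$ and the fact that the $\mathbf{X}$- and $\mathbf{Y}$-cascades share the same tree data with $|\chi_0|$ in place of $\chi_0$, give in the limit $\mathbb{E}_\xi[|\mathbf{X}|\mathbbm{1}_{\zeta>t}]\le\psi$ and $\mathbb{E}_\xi|\bar{\mathbf{X}}|\le\bar\psi=\psi$. I would then take $D=\{\xi:\psi(\xi,t)<\infty\text{ for a.e.\ }t\in(0,T)\}$, which is of full measure by Fubini, and note that for $\xi\in D$ the set $\{t\in(0,T):\psi(\xi,t)<\infty\}$ is, by part (b), an interval of the form $(0,a)$ or $(0,a]$, so having full measure it must be all of $(0,T)$; thus both expectations are finite on $D_T$. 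Finiteness a.e.\ on $\mathbb{R}^d\times(0,T)$ then lets me invoke \autoref{531211}(a),(b): the two cascade solutions of \textup{(nFNS)}$_{\chi_0}$ are well-defined on $D_T$, each equal to the pointwise limit of \eqref{eq:71211}, hence to each other; writing $\chi$ for the common value, $|\chi|=|\mathbb{E}_\xi[\mathbf{X}\mathbbm{1}_{\zeta>t}]|\le\mathbb{E}_\xi[|\mathbf{X}|\mathbbm{1}_{\zeta>t}]\le\psi$ closes the majorizing principle. Part (e) is then immediate: with a common $(\{W_v\},\{T_v\})$ and $\phi_0\le\gamma_0$, the recursion \eqref{eq:819191} is monotone in its initial datum (products in $[0,\infty]$ preserve order, with $0\cdot\infty=0$), so the inequality propagates through the truncations and, after taking $\mathbb{E}_\xi$, gives $\phi\le\gamma$.

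The only step that is not essentially bookkeeping is the no-atoms claim in (c): everything there hinges on the self-similar identity $\zeta_\xi=T_0+\min(\zeta^{(1)},\zeta^{(2)})$ together with the elementary fact that convolving with an atomless law destroys atoms. The other mildly delicate point is the Fubini-plus-monotonicity passage in (d) from ``finite a.e.\ in $(\xi,t)$'' to ``finite on a full-measure set $D$ for every $t\in(0,T)$'', which is precisely what turns the hypothesis $\psi<\infty$ a.e.\ into a statement usable at each fixed time and is therefore the crux of how the majorizing principle gets applied downstream.
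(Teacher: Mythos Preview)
Your proof is correct and, for parts (a), (b), (d), and (e), follows essentially the same lines as the paper (which in fact omits (a) as a repetition of the earlier proofs you cite, and handles (b), (d), (e) just as you do, including the Fubini-plus-monotonicity construction of $D$). The one genuine departure is in part (c). The paper argues analytically: once $|\chi_0|\equiv 1$ one has $\psi=\mathbb{P}_\xi(\zeta>t)\in[0,1]$, so in the identity
\[
e^{t|\xi|^2}\psi(\xi,t)-1=\int_0^t|\xi|^2e^{s|\xi|^2}\Big(\int_{\mathbb{R}^d}\psi(\eta,s)\psi(\xi-\eta,s)H(\eta|\xi)\,d\eta\Big)\,ds
\]
already obtained in (b), the integrand is bounded by $|\xi|^2e^{s|\xi|^2}$, hence the right-hand side is continuous in $t$; continuity of $\psi(\xi,\cdot)$ and the nullity of $[\zeta=t]$ follow. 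Your route instead uses the self-similar decomposition $\zeta_\xi=T_0+\min(\zeta^{(1)},\zeta^{(2)})$ together with the fact that convolution with the atomless exponential law of $T_0$ destroys atoms. Both arguments are short; the paper's is marginally more economical because the integral identity is already on the table from (b), while yours is purely probabilistic and makes the ``no atoms'' statement for $\zeta$ transparent without touching the equation. As a minor bonus, your treatment of (d) spells out the thinned bound $\mathbb{E}_\xi|\bar{\mathbf X}|\le\bar\psi=\psi$ explicitly, whereas the paper's written proof only records the un-thinned bound and leaves the thinned one implicit.
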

\begin{proof}
The proof of Part (a) follows the same lines as the proof of \autoref{819191} and \autoref{531211} and will be omitted. To show Part (b), we multiply both sides of \eqref{nFMS}$_{\psi_0}$ by $e^{t|\xi|^2}$ and get \begin{equation}\label{529211}e^{t|\xi|^2}\psi-\psi_0(\xi)=\int_0^t|\xi|^2e^{s|\xi|^2}f(\xi,s)ds\end{equation} where $f(\xi,s)=\int\psi(\eta,s)\psi(\xi-\eta,s)H(\eta|\xi)d\eta\ge 0$. The monotonicity of $e^{t|\xi|^2}\psi$ immediately follows. 
%{\textcolor{red}{%To show Part (c) and (d), observe that on the event $[\zeta> t]$, $\mathbf{Y}$ is a finite product of values of $|\chi_0|$:
%let $V(\xi,t)=\{v\in\mathbb{T}:\,\sum_{j=0}^{|v|-1}T_{v|j}<t\le\sum_{j=0}^{|v|}T_{v|j}\}$ be the set of vertices at which the cascade crosses the horizon $t$. Given a realization $\omega\in\Omega$ of the cascade, the recursion \eqref{eq:819191} has a closed form
%\begin{equation}\label{eq:49191}\mathbf{Y}(\xi ,t,\omega)=\prod\limits_{{v}\in V(\xi ,t)}|{{{\chi }_{0}}({{W}_{{v}}})}|\end{equation}
%where $V(\xi,t)$ is the random set given by \eqref{setV}. For Part (c), consider $|\chi_0(\xi)|=1$ for all $\xi$. In this case, $\mathbf{Y}=1$ and $\psi(\xi,t)=\mathbb{P}_\xi(\zeta> t)\in[0,1]$. %One has $f(\xi,s)\in[0,1]$. Then \eqref{529211} implies that $\psi$ is continuous in $t$. 
%The continuity of $\rho(\xi,\cdot)$ stated in (c) is a consequence of \eqref{529211} (with $|\chi_0|=1$) and the fact that in this case 
%$f(\xi,s)=\int\rho(\eta,s)\rho(\xi-\eta,s)H(\eta|\xi)d\eta\ge 0\in[0,1]$.}}

For Part (c), we first show the existence of a subset $D\subset\mathbb{R}^d$ with full measure such that $\psi$ is finite on $D_T$. Let $A=\{(\xi,t)\in\mathbb{R}^d\times(0,T): \psi(\xi,t)=\infty\}$. For each $t\in[0,T)$, let $A_t=\{\xi\in\mathbb{R}^d: \psi(\xi,t)=\infty\}$. By Fubini's theorem, $0=m(A)=\int_0^T m(A_t)dt$, where $m$ denotes the Lebesgue measure. This implies $A_t$ is null set for a.e.\ $t\in[0,T)$. By Part (b), $A_t\subset A_s$ whenever $t<s$. Hence, $\cup_{t\in(0,T)}A_t$ is in fact a countable union of null sets. One can choose $D=\mathbb{R}^d\backslash\cup_{t\in[0,T)}A_t$. %Then $\psi$ is finite everywhere on $D\times(0,T)$. 
Next, as noted in \autoref{setup}, it can be observed from \eqref{eq:822191} that on the event $[\zeta> t]$, $\mathbf{X}$ is a circle-dot product of $\chi_0(W_v)$, $v\in V(\xi,t)$, in a suitable order, where $V(\xi,t)$, is given by given by \eqref{Vxit}, is the set of leaves of the cascade tree that reached time $t$.  Thanks to the inequality $|a\odot_\xi b|\le\frac{1}{2} |a||b|$ (see \eqref{sprod-bd}),  one has $|\mathbf{X}|\le 2\mathbf{Y}$. Let $(\xi,t)\in D\times[0,T)$, and take the expected values of both sides and use Part (b):
\[\mathbb{E}_\xi[|\mathbf{X}|\mathbbm{1}_{\zeta>t}]\le {2}\mathbb{E}_\xi[\mathbf{Y}\mathbbm{1}_{\zeta>t}]
%{\color{blue} \mbox{why do we need this?}}
%=\psi(\xi,t)\le e^{(s-t)|\xi|^2}\psi(\xi,s)
<\infty\]
for any $(\xi,t)\in D_T$ and $s\in(t,T)$. By \autoref{819191}, the minimal cascade solution to \eqref{nFNS}$_{\chi_0}$ is well-defined on $D_T$. 

Part (d) follows by noting that on the event $[\zeta> t]$, $\mathbf{Y}$ is a finite product of values of $|\chi_0|$:
%let $V(\xi,t)=\{v\in\mathbb{T}:\,\sum_{j=0}^{|v|-1}T_{v|j}<t\le\sum_{j=0}^{|v|}T_{v|j}\}$ be the set of vertices at which the cascade crosses the horizon $t$. Given a realization $\omega\in\Omega$ of the cascade, the recursion \eqref{eq:819191} has a closed form
\begin{equation}\label{eq:49191}
\mathbf{Y}(\xi ,t,\omega)=\prod\limits_{{v}\in V(\xi ,t)}|{{{\chi }_{0}}({{W}_{{v}}})}|\end{equation}
where $V(\xi,t)$ is the random set given by \eqref{Vxit}.
%$V(\xi,t)=\{v\in\mathbb{T}:\,\sum_{j=0}^{|v|-1}T_{v|j}<t\le\sum_{j=0}^{|v|}T_{v|j}\}$.  
Thus, an ordering in the initial data implies an ordering in the random variables, resulting in an ordering of the solutions of \eqref{nFMS}.
\end{proof}
According to \autoref{69211}, the finiteness of the cascade solution to \eqref{nFMS} with initial data larger than or equal to $|\chi_0|/2$ leads to the existence of cascade solutions to \eqref{nFNS} with initial data $\chi_0$. This result can be generalized by observing a natural algebraic symmetry of the multiplicative structure of \eqref{eq:819191}:% on the stochastic functional:
\begin{equation}\label{symm}|\chi_0|\to f(|\chi_0|),\ \ \ \mathbf{Y}\to f(\mathbf{Y}), 
\quad\mbox{or in other words, $\mathbf{Y}_{f(|\chi_0|)}=f(\mathbf{Y}_{\chi_0})$},
\end{equation}
where $f$ is a multiplicative scalar function, i.e.\ $f(ab)=f(a)f(b)$. 
%If $\phi:(0,\infty)\to(0,\infty)$ is continuous then it must be a power function. We refer to this special case as the \emph{power symmetry} property:
%\[\chi_0\to\chi_0^\alpha,\ \ \ \mathbf{X}\to\mathbf{X}^\alpha\]
%where $\alpha\in\mathbb{R}$. 
Relaxing the property of $f$ leads to a generalized majorizing principle. The precise statement requires the introduction of a class of functions
\begin{equation}\label{eq:73211}
S=\{f:[0,\infty)\to[0,\infty)\text{\ submultiplicative,\ convex,}\ f(0)=0,\ f\not\equiv 0\}.
\end{equation}
Here submulplicativity means $f(ab)\le f(a)f(b)$ for all $a,b\in[0,\infty)$.
\begin{rem}
It is worth mentioning here some properties of the set $S$. 
\begin{enumerate}[(i)]
\item The power functions $x^\alpha$ for $\alpha\ge 1$ obviously belong to $S$. Less obvious elements of $S$ include functions of the form $x^\alpha\ln(x^\beta+\gamma)$ for $\alpha\ge 1$, $\beta\ge 0$, $\gamma\ge e^2$.\vspace{-4pt}
\item Each $f\in S$ is continuous on $[0,\infty)$. The continuity on $(0,\infty)$ is due to the convexity. The continuity at 0 is because $f(x)=f(x1+(1-x)0)\le xf(1)$ for all $x\in(0,1)$.\vspace{-4pt}
\item Each $f\in S$ is positive, increasing on $(0,\infty)$ and $\lim_{x\to\infty}f(x)=\infty$. The positivity derives from the submultiplicative property. The monotonicity is because $f(x)=f(\frac{x}{y}y+(1-\frac{x}{y})0)\le \frac{x}{y}f(y)<f(y)$ for $0\le x<y$. By the convexity, $f$ grows at least as fast as a linear function as $x\to \infty$. \vspace{-4pt}
\item If $f\in S$ and $c\ge 1$ then $cf\in S$. \vspace{-4pt}
\item If $f_1,f_2\in S$ then $f_1+f_2$, $f_1f_2$, $f_1\circ f_2\in S$. In other words, $S$ is closed under the addition, multiplication and composition.\vspace{-4pt}
%\item If $\phi_1,\phi_2\in S$ are both nondecreasing or both nonincreasing then $\phi_1\phi_2\in S$.\vspace{-4pt}
\item If $\{f_i\}_{i\in I}\subset S$ and $f(x)=\sup_if_i(x)$ is finite everywhere then $f\in S$.\vspace{-4pt}
\item If $f\in S$, the function $g(t)=\ln f(e^t)$ is subadditive on $\mathbb{R}$. Basic properties of subadditive functions can be found in \cite[Ch.\ 7]{hille} and \cite[Ch.\ 16]{kuczma}. 
%By \cite[Thm.\ 7.6.2]{hille}, there exist $\alpha,\beta\in\mathbb{R}$ such that $\phi(x)\ge x^\alpha$ on $(0,1)$ and $\phi(x)\ge x^\beta$ on $[1,\infty)$. 
\end{enumerate}
\end{rem}
%More generally, the multiplicative structure of \eqref{eq:819191} also  This leads to the next majorizing principle.
\begin{thm}[Generalized Majorizing Principle]\label{614211}
Let $\chi_0:\mathbb{R}^d \to\mathbb{C}^d$ be a measurable function and $f\in S$. Let $\phi$ be the minimal cascade solution to {\eqref{nFMS}}$_{f(|\chi_0|/2)}$. Suppose that for some $T\in(0,\infty]$, $\phi<\infty$ a.e.\ on $\mathbb{R}^d\times[0,T)$. %where $D\subset \mathbb{R}^d$ is a subset with full measure. 
Then there exists a subset $D\subset \mathbb{R}^d$ of full measure such that {\eqref{nFNS}}$_{\chi_0}$ has a minimal cascade solution $\chi$ on $D\times[0,T)$ satisfying $|\chi|\le 2f^{-1}(\phi)$. 
\end{thm}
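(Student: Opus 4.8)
The plan is to exploit the algebraic symmetry \eqref{symm} together with Jensen's inequality to transfer finiteness from the $\phi$-cascade to the $\mathbf{X}$-cascade. First I would consider the stochastic process $\mathbf{Y}$ associated with \textup{(nFMS)}$_{|\chi_0|}$ and observe, as in the representation \eqref{eq:49191}, that on the non-explosion event $[\zeta>t]$ one has $\mathbf{Y}(\xi,t)=\prod_{v\in V(\xi,t)}|\chi_0(W_v)|$, a \emph{finite} product. Since $f$ is multiplicative-dominating in the sense $f(ab)\le f(a)f(b)$, applying $f$ to each factor and using submultiplicativity repeatedly over the finitely many vertices of $V(\xi,t)$ gives $f(\mathbf{Y}(\xi,t))\le \prod_{v\in V(\xi,t)}f(|\chi_0(W_v)|)$ on $[\zeta>t]$. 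But the right-hand side is exactly the representation \eqref{eq:49191} for the process, call it $\mathbf{Z}$, built from the initial data $f(|\chi_0|)$ via \eqref{eq:819191}; hence $f(\mathbf{Y})\mathbbm{1}_{\zeta>t}\le \mathbf{Z}\,\mathbbm{1}_{\zeta>t}$ pointwise on $\Omega$. Here I should note that $f(0)=0$ handles the vacuous-product and zero-factor cases consistently, and that since $f$ is increasing (Remark (iii)) this is a genuine monotone comparison.

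Next I would take expectations. By \autoref{69211}(a) applied with initial data $f(|\chi_0|)$, the minimal cascade solution of \textup{(nFMS)}$_{f(|\chi_0|)}$ is $\phi(\xi,t)=\mathbb{E}_\xi[\mathbf{Z}\,\mathbbm{1}_{\zeta>t}]$, which is assumed finite a.e.\ on $\mathbb{R}^d\times(0,T)$. Combined with the pointwise bound just obtained, $\mathbb{E}_\xi[f(\mathbf{Y})\mathbbm{1}_{\zeta>t}]\le\phi(\xi,t)<\infty$ a.e. Now I invoke Jensen's inequality: $f$ is convex, so $f(\mathbb{E}_\xi[\mathbf{Y}\mathbbm{1}_{\zeta>t}])\le\mathbb{E}_\xi[f(\mathbf{Y}\mathbbm{1}_{\zeta>t})]$; and because $f(0)=0$ and $\mathbf{Y}\ge 0$, one checks $f(\mathbf{Y}\mathbbm{1}_{\zeta>t})=f(\mathbf{Y})\mathbbm{1}_{\zeta>t}$, so $f(\psi(\xi,t))\le\phi(\xi,t)<\infty$ where $\psi=\mathbb{E}_\xi[\mathbf{Y}\mathbbm{1}_{\zeta>t}]$ is the minimal cascade solution of \textup{(nFMS)}$_{|\chi_0|}$. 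Since $f$ is increasing, continuous, and $f(x)\to\infty$ (Remark (iii)), $f^{-1}$ is well-defined on $[f(0),\infty)=[0,\infty)$ and increasing, so $\psi\le f^{-1}(\phi)<\infty$ a.e.\ on $\mathbb{R}^d\times(0,T)$.

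Finally I would apply \autoref{69211}(d), the majorizing principle, to $\psi$: since $\psi<\infty$ a.e.\ on $\mathbb{R}^d\times(0,T)$, there is a full-measure set $D\subset\mathbb{R}^d$ on which both $\mathbb{E}_\xi|\bar{\mathbf{X}}|$ and $\mathbb{E}_\xi[|\mathbf{X}|\mathbbm{1}_{\zeta>t}]$ are finite on $D\times(0,T)$, the cascade solution $\chi$ to \textup{(nFNS)}$_{\chi_0}$ exists there (as the pointwise limit of the Picard iteration), and $|\chi|\le\psi$. Chaining the two bounds gives $|\chi|\le\psi\le f^{-1}(\phi)$ on $D\times(0,T)$, which is the claim. (One may need to shrink $D$ further to a full-measure subset on which $\psi\le f^{-1}(\phi)$ genuinely holds, using that $\phi<\infty$ a.e.)

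I expect the main obstacle to be the pointwise comparison $f(\mathbf{Y})\mathbbm{1}_{\zeta>t}\le\mathbf{Z}\,\mathbbm{1}_{\zeta>t}$: one must be careful that the iterated use of submultiplicativity across the random tree $V(\xi,t)$ is legitimate — this needs $V(\xi,t)$ to be finite (true precisely on $[\zeta>t]$, which is why the minimal-cascade formulation, rather than the full $\mathbf{X}$, is essential here) and needs the recursive structure \eqref{eq:819191} to be respected, i.e.\ one should really argue by the same truncation $\mathbf{Y}_n$ used in the proof of \autoref{531211}(b) and pass to the limit, or equivalently verify that $\mathbf{Z}$ and $f(\mathbf{Y})$ satisfy the comparison at each finite level of the cascade and invoke monotone/dominated convergence. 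The analytic route promised in the appendix presumably bypasses this by working directly with \eqref{eq:71217}; the probabilistic route above is the more transparent one but requires this level-by-level bookkeeping to be spelled out.
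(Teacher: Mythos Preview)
Your proposal is correct and follows essentially the same route as the paper's proof: define $\mathbf{Z}$ with initial data $f(|\chi_0|)$, use submultiplicativity on the finite product \eqref{eq:49191} to get $f(\mathbf{Y})\le\mathbf{Z}$ on $[\zeta>t]$, apply Jensen's inequality (using $f(0)=0$) to obtain $f(\psi)\le\phi$, and then invoke \autoref{69211}(d). The paper's argument is in fact slightly terser than yours---it does not spell out the truncation bookkeeping you flag as a potential obstacle, treating the finite-product representation on $[\zeta>t]$ as sufficient justification.
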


%\begin{rem}
%By \cite[Thm.\ 7.6.1]{hille}, a  is bounded from above by a power function $x^\alpha$.
%\end{rem}
\begin{proof}
Let $\lwrY_{\phi_0}=\mathbf{Y}_
{\phi_0}\mathbbm{1}_{\zeta>t}$ be the minimal stochastic process defined by \eqref{eq:819191} corresponding to the initial data $\phi_0=f(|\chi_0|/2)$ and $\lwrY_{\psi_0}=\mathbf{Y}_{\psi_0}\mathbbm{1}_{\zeta>t}$ be the minimal stochastic process defined by \eqref{eq:819191} corresponding to the initial data $\psi_0=|\chi_0|/2$. 
%Moreover, denote $\psi=\mathbb{E}_\xi \lwrY_0$ -- the minimal cascade solution to \eqref{nFMS} corresponding to the initial data $\psi_0=|\chi_0|/2$.
 On the explosion event $[\zeta<t]$, $\lwrY_{\phi_0}=\lwrY_{\psi_0}=0$. On the non-explosion  event $[\zeta>t]$,
 we apply $f$ to both sides of \eqref{eq:49191} and use the submultiplicativity of $f$ to obtain $f(\lwrY_{\psi_0})\le\prod_{{v}\in V(\xi ,t)}
 f(|{{{\chi }_{0}}({{W}_{{v}}})}|/2)=\lwrY_{\phi_0}$, 
where $V(\xi,t)$ as in the proof of \autoref{69211}, part (d). Taking the expectation of both sides on the non-explosion event and using the fact that $f(0)=0$, one gets 
\[\mathbb{E}_\xi[f(\lwrY_{\psi_0})]\le {{\mathbb{E}}_{\xi }}[\lwrY_{\phi_0}]=\phi.\]
By Jensen's inequality, $f(\psi)\le\phi$ where $\psi=\mathbb{E}_\xi[\mathbf{Y}\mathbbm{1}_{\zeta>t}]$ is the cascade solution of \eqref{nFMS}$_{|\chi_0|/2}$. Thus, $\psi\le f^{-1}(\phi)<\infty$ a.e.\ on $\mathbb{R}^d\times(0,T)$. The proof is then completed by \autoref{69211} (c).
\end{proof}
%\begin{rem}\label{remark-generalize}
%%\begin{enumerate}[1.]
%{\color{blue} {\color{red} I am not sure this follows directly without assuming both expectations are finite -- I've added new \autoref{lwr=thinned} to address the matter, hopefully the argument makes sense (if we add that proposition, this remark is not really needed)...} 
%If $\mathbb{E}_\xi|\bar{\mathbf{X}}|<\infty$,  then,} 
%under the assumptions of \autoref{614211}, the thinned and minimal cascade solutions of \eqref{nFNS}$_{\chi_0}$ coincide with each other 
%{\color{blue}a.e.} 
%on $\mathbb{R}^d\times[0,T)$ and are given by the pointwise limit of the Picard's iteration \eqref{eq:71211}.
%%\end{enumerate}
%\end{rem}
An analytic proof of \autoref{614211} will be given in the appendix. Next, we have another variation of the majorzing principle thanks to H\"{o}lder's inequality.
\begin{prop}\label{94191}
Let $k\ge 1$ and $\alpha_1,\ldots,\alpha_k\in[0,1]$ be such that $\alpha_1+\ldots+{\alpha_k}\le 1$. Consider measurable functions $\chi_0:\mathbb{R}^d \to\mathbb{C}^d$ and $\chi_{01},\ldots,\chi_{0k}:\mathbb{R}^d \to[0,\infty)$ satisfying $|\chi_0|\le 2\chi_{01}^{\alpha_1}\ldots \chi_{0k}^{\alpha_k}$. Let $\chi_j$ be the cascade solution to {\eqref{nFMS}}$_{\chi_{0j}}$ for $j=1,\ldots k$. Suppose that for some $T\in(0,\infty]$, $\chi_1,\ldots, \chi_k<\infty$ a.e.\ on $\mathbb{R}^d\times[0,T)$. Then for some subset $D\subset \mathbb{R}^d$ with full measure, {\eqref{nFNS}}$_{\chi_0}$ has a minimal cascade solution $\chi$ on $D\times[0,T)$ and $|\chi|\le 2\chi_{1}^{\alpha_1}\ldots\chi_{k}^{\alpha_k}$.
\end{prop}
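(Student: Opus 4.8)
The plan is to follow the template of the proof of \autoref{614211}, but with the single submultiplicative function replaced by a product of powers and Jensen's inequality replaced by the generalized H\"older inequality. First I would couple all of the relevant processes on one cascade. Fix $\xi\neq 0$; on the space $(\Omega,\mathcal{F},\mathbb{P}_\xi)$ carrying $(\{W_v\},\{T_v\})$, let $\mathbf{Y}$ be the process \eqref{eq:819191} for \textup{(nFMS)}$_{|\chi_0|}$ and, for each $j=1,\dots,k$, let $\mathbf{Y}_j$ be the analogous process with initial datum $\chi_{0j}$ in place of $|\chi_0|$, all built from the same $W_v$'s and $T_v$'s; the explosion time $\zeta$ is common to all of them, since it depends only on the $T_v$'s. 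On the non-explosion event $[\zeta>t]$ the representation \eqref{eq:49191} gives $\mathbf{Y}=\prod_{v\in V(\xi,t)}|\chi_0(W_v)|$ and $\mathbf{Y}_j=\prod_{v\in V(\xi,t)}\chi_{0j}(W_v)$, so applying the pointwise hypothesis $|\chi_0|\le\prod_j\chi_{0j}^{\alpha_j}$ at each vertex $W_v$ and regrouping the (finite) product yields $\mathbf{Y}\le\prod_{j=1}^k\mathbf{Y}_j^{\alpha_j}$ on $[\zeta>t]$.

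Next I would take expectations. Put $\beta=1-(\alpha_1+\dots+\alpha_k)\ge 0$. Since $\mathbbm{1}_{\zeta>t}$ is $\{0,1\}$-valued and each $\alpha_j>0$ (and $k\ge1$), the previous bound upgrades to
\[\mathbf{Y}\,\mathbbm{1}_{\zeta>t}\ \le\ \prod_{j=1}^k\big(\mathbf{Y}_j\mathbbm{1}_{\zeta>t}\big)^{\alpha_j}\cdot\big(\mathbbm{1}_{\zeta>t}\big)^{\beta}\]
everywhere on $\Omega$ (drop the last factor when $\beta=0$). The numbers $1/\alpha_1,\dots,1/\alpha_k$ and $1/\beta$ are H\"older-conjugate, so the generalized H\"older inequality, together with \autoref{69211}(a) — which identifies $\chi_j=\mathbb{E}_\xi[\mathbf{Y}_j\mathbbm{1}_{\zeta>t}]$ — and the bound $\mathbb{P}_\xi(\zeta>t)\le 1$, gives
\[\psi(\xi,t):=\mathbb{E}_\xi[\mathbf{Y}\mathbbm{1}_{\zeta>t}]\ \le\ \prod_{j=1}^k\big(\mathbb{E}_\xi[\mathbf{Y}_j\mathbbm{1}_{\zeta>t}]\big)^{\alpha_j}\big(\mathbb{P}_\xi(\zeta>t)\big)^{\beta}\ \le\ \prod_{j=1}^k\chi_j(\xi,t)^{\alpha_j}.\]

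Finally, since each $\chi_j$ is finite a.e.\ on $\mathbb{R}^d\times(0,T)$, the finite product $\prod_j\chi_j^{\alpha_j}$ is finite a.e.\ there, hence so is $\psi$; \autoref{69211}(d) then produces a full-measure set $D\subset\mathbb{R}^d$ on which \textup{(nFNS)}$_{\chi_0}$ has a minimal cascade solution $\chi$ with $|\chi|\le\psi\le\prod_j\chi_j^{\alpha_j}$ on $D\times(0,T)$, which is the assertion. I expect the only delicate point to be the bookkeeping in the H\"older step when $\alpha_1+\dots+\alpha_k<1$: one must park the leftover mass $\beta$ on the indicator $\mathbbm{1}_{\zeta>t}$ so that the reciprocals of the exponents sum to $1$. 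Equivalently, one can reduce to the equality case by appending the trivial datum $\chi_{0,k+1}\equiv 1$, for which $\mathbf{Y}_{k+1}\equiv 1$ on $[\zeta>t]$ and $\chi_{k+1}=\mathbb{P}_\xi(\zeta>t)\le 1$. Beyond this, everything is routine given \autoref{69211}.
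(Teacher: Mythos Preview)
Your proposal is correct and follows essentially the same approach as the paper: couple all the processes on one cascade, use the product representation \eqref{eq:49191} to obtain $\mathbf{Y}\le\prod_j\mathbf{Y}_j^{\alpha_j}$ on $[\zeta>t]$, apply H\"older's inequality, and finish with \autoref{69211}(d). Your treatment of the case $\alpha_1+\cdots+\alpha_k<1$ (parking the residual exponent $\beta$ on $\mathbbm{1}_{\zeta>t}$, or equivalently appending the trivial datum $\chi_{0,k+1}\equiv 1$) is just a more explicit rendering of the paper's one-line remark ``and the fact that $\mathbb{E}[1]=1$''.
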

\begin{proof}
Let $\mathbf{Y}$, $\mathbf{X}_1$,\ldots, $\mathbf{X}_k$ be the stochastic processes defined in the fashion of \eqref{eq:819191} with initial data $|\chi_0|/2$, $\chi_{01}$, \ldots, $\chi_{0k}$, respectively. By  the representation \eqref{eq:49191}, $\mathbf{Y}\le\mathbf{X}_1^{\alpha_1}\ldots \mathbf{X}_k^{\alpha_k}$. By H\"{o}lder's inequality and the fact that $\mathbb{E}[1]=1$,
\[{{\mathbb{E}}_{\xi }}[\mathbf{Y}{\mathbbm{1}_{\zeta >t}}]\le {{\mathbb{E}}_{\xi }}[\mathbf{X}_{1}^{{{\alpha }_{1}}}\ldots \mathbf{X}_{k}^{{{\alpha }_{k}}}{\mathbbm{1}_{\zeta >t}}]\le \prod_{j=1}^{k}{{{\mathbb{E}}_{\xi }}{{[{{\mathbf{X}}_{j}}{\mathbbm{1}_{\zeta >t}}]}^{{{\alpha }_{j}}}}}=\chi_{1}^{\alpha_1}\ldots\chi_{k}^{\alpha_k}.\]
%\[\psi (\xi ,t)={{\mathbb{E}}_{\xi ,t}}[\mathbf{Y}{{I}_{S\ge t}}]={{\mathbb{E}}_{\xi ,t}}[|\mathbf{X}{{|}^{\alpha }}{{I}_{S\ge t}}]\ge {{\{{{\mathbb{E}}_{\xi ,t}}[|\mathbf{X}|{{I}_{S\ge t}}]\}}^{\alpha }}\ge |\chi (\xi ,t){{|}^{\alpha }}.\]
The proof is then completed by \autoref{69211} (c).
\end{proof}
\section{Applications}\label{applications}
In this section, we apply the solution representation $\chi=\mathbb{E}\mathbf{X}$ of (FNS), where $\mathbf{X}$ is defined by \eqref{eq:822191}, and the majorizing principle to obtain some well-posedness results for the Navier-Stokes equations and ill-posedness results for the Montgomery-Smith equation.

\subsection{Initial data in majorization space}
For each standard majorizing kernel $h:\mathbb{R}^d\backslash\{0\}\to(0,\infty)$, we define a \emph{majorization space} $F_h$ as follows \begin{equation}\label{Fh}F_h=\left\{f\in L^1_{\text{loc}}(\mathbb{R}^d):\ \underset{\xi\in\mathbb{R}^d}{\text{esssup}\,}|f(\xi)|h(\xi)^{-1}<\infty\right\},\ \ \|f\|_{F_h}=\|{c_0f}/{h}\|_{L^\infty}.\end{equation}
We denote by $F_h(\mathbb{R}^d;\mathbb{C}^d)$, $F_h(\mathbb{R}^d;\mathbb{C})$,\ldots if reference to the codomain is needed. Denote $F_{h,\infty}=L^\infty((0,\infty);F_h)$.
\begin{prop}\label{decay}
Let $h:\mathbb{R}^d\backslash\{0\}\to(0,\infty)$ be a standard majorizing kernel and let $v_0\in F_h(\mathbb{R}^d;\mathbb{C}^d)$. We have the following conclusions.
\begin{enumerate}[(a)]
\item If $\|v_0\|_{F_h}\le 2$ then {\eqref{FNS}}$_{v_0}$ has a solution $v\in F_{h,\infty}$ with $\|v\|_{F_{h,\infty}}\le 2$.
\item If $\|v_0\|_{F_h}< 2$ then the global solution $v$ obtained in Part (a) has a decay 
\[v(\xi,t)\le C\frac{\mu\|v_0\|_{F_h}}{2-\mu\|v_0\|_{F_h}}e^{-\kappa|\xi|\sqrt{t}}h\ \ \ \forall\,(\xi,t),\] 
where $C>0$ is a universal constant, $\mu=(8e^{3/4})^\kappa$, and $\kappa>0$ is any number less than $\min\left\{1,\frac{4(\ln 2-\ln\|v_0\|_{F_h})	}{3(1+4\ln 2)}\right\}$.
%$\kappa=\kappa(\|v_0\|_{F_h})>0$ is a small number such that $\mu\|v_0\|_{F_h}<1$.
\item Moreover, for any $T\in[0,\infty]$, this solution is unique in the class of solutions satisfying $\|v\|_{F_{h,\infty}(\mathbb{R}^d\times[0,T))}=\mathrm{esssup}_{t\in[0,T)}\|v(t)\|_{F_h}<2$.

\end{enumerate}
\end{prop}
\begin{proof}
To show (a), observe that $\|\chi_0\|_{L^\infty}=\|v_0\|_{F_h}\le 2$ where $\chi_0=c_0v_0/h$, and 
therefore, $\psi_0=|\chi_0|/2\in[0,1]$ a.e. Let $\lwrY=\mathbf{Y}\mathbbm{1}_{\zeta>t}$  be the minimal solution process for \eqref{nFMS}$_{\psi_0}$. On the event $[\zeta>t]$, $\lwrY$ is a product of product of values of $\psi_0$. Thus,
$\mathbb{E}_\xi|\lwrY|\le1$ for a.e.\ on $\mathbb{R}^d\times [0,\infty)$. Therefore, $\psi=\mathbb{E}_\xi \lwrY$ is the minimal cascade solution of 
\eqref{nFMS}$_{\psi_0}$ on $\mathbb{R}^d\times [0,\infty)$. Consequently, by the majorizing principle \autoref{69211}, 
the minimal cascade solution for \eqref{nFNS}$_{\chi_0}$, $\chi=\mathbb{E}_\xi \lwrX$, exists and satisfies
\[|\chi|\le2\psi\le 2\]
on a set $D\times[0,\infty)$, where $D$ is the set of full measure in $\mathbb{R}^d$. Therefore, $v=h\chi/c_0$ is a global solution to \eqref{FNS}$_{v_0}$ and $\|v\|_{F_{h,\infty}}\le 2$.

To show (b), let $\gamma=\|\chi_0/2\|_{L^\infty}<1$. By the majorizing principle, $|\chi(\xi,t)|\le2\psi(\xi,t)$ where  $\psi$ is the cascade solution to \eqref{nFMS}$_{\gamma}$. The function $\psi$ can be expressed as $\psi(\xi,t)=\sum_{n=1}^\infty\gamma^np_n(\xi,t)$ where
\begin{equation}\label{probability}
p_n(\xi,t)=\mathbb{P}_{\xi}(\zeta_\xi>t,\ \text{exactly~}n\text{~paths~cross~the~horizon~}t).\end{equation}
By conditioning on the first time of branching, one gets
\begin{equation}\label{probrecursion}
{{p}_{n}}(\xi ,t)=\int_{0}^{t}{|\xi {{|}^{2}}{{e}^{-s|\xi {{|}^{2}}}}\int_{{{\mathbb{R}}^{d}}}{\sum\limits_{k=1}^{n-1}{{{p}_{k}}(\eta ,t-s){{p}_{n-k}}(\xi -\eta ,t-s)H(\eta |\xi )d\eta ds}}}.\end{equation}
Note that $p_1(\xi,t)=e^{-|\xi|^2t}\le e^{1/4}e^{-|\xi|\sqrt{t}}$. By the elementary estimate ${{e}^{-|\xi {{|}^{2}}s}}{{e}^{-|\eta |\sqrt{t-s}}}{{e}^{-|\xi -\eta |\sqrt{t-s}}}\le {{e}^{1/2}}{{e}^{-|\xi {{|}^{2}}s/2}}{{e}^{-|\xi |\sqrt{t}}}$ and induction on $n\ge 1$, one can show that 
\[p_n(\xi,t)\le\theta\lambda^{n-1}C_ne^{-|\xi|\sqrt{t}}\ \ \ \forall\,n\in\mathbb{N},\]
where $\theta=e^{1/4}$, $\lambda=2e^{3/4}$, and $\{C_n\}$ is the Catalan sequence $C_1=1$, $C_n=\sum_{k=1}^{n-1}C_kC_{n-k}$. Note that $C_n$ grows asymptotically as $4^n$. Since $p_n\in[0,1]$, for any $\kappa\in(0,1)$,
\[{{p}_{n}}(\xi ,t)\le {{\left( \theta {{\lambda }^{n-1}}{{C}_{n}}{{e}^{-|\xi |\sqrt{t}}} \right)}^{\kappa }}\lesssim {{(4\lambda )}^{\kappa n}}{{e}^{-\kappa |\xi |\sqrt{t}}}.\]
Therefore, by choosing $\kappa$ small enough,
\[\psi (\xi ,t)\lesssim \sum\limits_{n=1}^{\infty }{{{({{4}^{\kappa }}{{\lambda }^{\kappa }}\gamma )}^{n}}{{e}^{-\kappa |\xi |\sqrt{t}}}}= \sum\limits_{n=1}^{\infty }{{{(\mu \gamma )}^{n}}{{e}^{-\kappa |\xi |\sqrt{t}}}}=\frac{\mu \gamma }{1-\mu \gamma }{{e}^{-\kappa |\xi |\sqrt{t}}}.\]

To show Part (c), we adapt the martingale-type argument used in \cite{lejan} to show uniqueness of the thinned cascade solutions and in \cite{alphariccati} to generate multiple solution in conditions of explosion.
Let $\tilde{v}(\xi,t)$ be a solution to \eqref{FNS}$_{v_0}$ satisfying 
\[\alpha=\|\tilde{v}\|_{F_{h,\infty}(\mathbb{R}^d\times[0,T))}<2,\] 
and denote $\tilde{\chi}=c_0\tilde{v}/h$. For $n\in\mathbb{N}$ and $t\in[0,T)$, consider a sequence of stochastic processes $\tilde{X}_n$ defined inductively by
\[
\tilde{X}_0(\xi,t)=\tilde{\chi}(\xi,t),\qquad \tilde{X}_{n+1}(\xi,t)=\chi_0(\xi)\mathbbm{1}_{T_0>t}+\tilde{X}_{n}^{(1)}(W_1,t-T_0)\spr{\xi}\tilde{X}_{n}^{(2)}(W_2,t-T_0)\mathbbm{1}_{T_0<t}.
\]
%One may think of $\tilde{X}_n$ as an $\spr{}$-product over leaves of  a cascade tree truncated at $n$ generations which initial data $\chi_0$ to the leaves corresponding to branches that reached time $t$ and reference solution $\tilde{\chi}$ at the corresponding time to the leaves of branches of $n$ generations that did not reach time $t$.
Note that for each $n$, $\tilde{X}_n$ is well-defined. Moreover, $\tilde{\chi}^{(n)}:=\mathbb{E}\tilde{X}_n$ satisfies the Picard's iteration
\begin{equation}\label{eq:71211-2}{\tilde{\chi}^{(0)}}=\tilde{\chi},\ \ \ {\tilde{\chi}^{(n+1)}}={U}(\chi_0)+\mathfrak{B}({\tilde{\chi}^{(n)}},{\tilde{\chi}^{(n)}})
\end{equation} 
where $U$ and $\mathfrak{B}$ as in \eqref{eq:71211}. (Recall, that by \autoref{531211}, $\chi=\mathbb{E}_\xi[{\mathbf{X}}]$ is the pointwise limit of the Picard iteration \eqref{eq:71211}.) %The key observation going back to \cite{lejan} is that for all $n$, 
It follows from an induction on $n$ that $\mathbb{E}\tilde{X}_n=\tilde{\chi}$ for all $n$. On the non-explosion event $[\zeta>t]$,  $\tilde{X}_n=\mathbf{X}=\mathbf{X}\mathbbm{1}_{\zeta>t}$ for sufficiently large $n$ (depending on $\omega$), while on the explosion event event $[\zeta<t]$, $\tilde{X}_n$ is a $\spr{}$-products of a combination of $\tilde{\chi}$ and $\chi_0$ taken in an appropriate order. Taking into account \eqref{sprod-bd}, we notice that $|\tilde{X_n}|$ is bounded from above by $\alpha^{N_n}/2^{N_{n}-1}$, where $N_n$ is the (random) number of paths in the cascade truncated to $n$ generations. On the event of explosion, $N_n\uparrow\infty$ a.s.\ as $n\to\infty$, we have $0=\mathbf{X}\mathbbm{1}_{\zeta>t}$. Thus, we have $\tilde{X_n}\to\mathbf{X}\mathbbm{1}_{\zeta>t}$ a.s.\ and therefore by Lebesgue's Dominated Convergence Theorem,
\[
\tilde{\chi}=\lim\limits_{n\to\infty}\mathbb{E}\tilde{X}_n=\mathbb{E}_\xi{\mathbf{X}}=\chi.
\]
Thus, the minimal cascade solution $\chi=\mathbb{E}_\xi{\mathbf{X}}$ is the unique solution to \eqref{nFNS}$_{\chi_0}$ satisfying $\mathrm{esssup}{|\chi(\xi,t)|}<2$. In conclusion, $v=\chi h$ is the unique solution to \eqref{FNS} satisfying 
$\|{v}\|_{F_{h,\infty}}<2$.
\end{proof}
\begin{rem}
The existence results in \cite{lejan}, and in \cite[Thm.\ 1.1]{rabi} requires a stronger smallness condition on the initial data, namely $\|v_0\|_{F_h}\le 1$, under which they also obtain the uniqueness of solutions in the ball $\{\|v\|_{F_{h,\infty}}\le 1\}$. Our improvement of the existence result by allowing $\|v_0\|_{F_h}\le 2$ is owing to the fact that we use a symmetrized product and do not use a thinning procedure. Note, however, that our uniqueness result does not include to the boundary of the open ball $\{\|v\|_{F_{h,\infty}}< 2\}$. It turns out that the uniqueness for \eqref{FMS} fails if one includes the ball's boundary (\autoref{nonuniqueness}).
%Although the nonuniqueness of cascade-type solutions for \eqref{FNS} remains an unsettled issue, the nonuniqueness of solutions for \eqref{FMS} will be discussed in detail in \autoref{nonuniqueness}.
\end{rem}
\subsection{Initial data in adapted spaces}\label{general}
%In \autoref{cascadesol}, we used a probabilistic assumption (the finiteness of $\mathbb{E}_\xi|\bar{\mathbf{X}}|$ or $\mathbb{E}_\xi[|{\mathbf{X}}|\mathbbm{1}_{\zeta>t}]$) to obtain a thinned or minimal cascade solution to \eqref{nFNS}. 
%%The type of cascade solution depends on which expected value is finite. 
%Under either of these assumptions, the Picard's iteration \eqref{eq:71211} associated with \eqref{nFNS} 
%%\begin{equation}\label{eq:618211}{{\chi}^{(0)}}=0,\ {{\chi}^{(n+1)}}=\mathfrak{U}(|\chi_0|)+\mathfrak{B}({{\chi}^{(n)}},{{\chi}^{(n)}})\end{equation}
%converges pointwise almost everywhere to the corresponding cascade solution (\autoref{531211}). To ensure the finiteness of $\mathbb{E}_\xi|\bar{\mathbf{X}}|$, Le Jan and Sznitman and the authors in \cite{rabi, orum} imposed a pointwise smallness condition on the initial data, namely $|\chi_0|\le 1$. This simple condition guarantees that $|\bar{\mathbf{X}}|\le 1$ for all time and gives a global cascade solution. 
%%The first purpose of this section is to establish the pointwise convergence of the Picard iteration associated to \eqref{NS}, \autoref{75211}. 
%One purpose of this section is to relax the pointwise smallness condition to obtain global cascade solutions for initial data small in an integral sense and local (in time) cascade solutions for large initial data (\autoref{75212}).
%Orum used a time-shifting technique to obtain a local thinned cascade solution for large initial data $|\chi_0|\le R$ \cite[Ch.\ 6]{orum}, but at the cost of altering the recursion \eqref{eq:511211}. 
The use of Banach fixed-point theorem to obtain a solution to the Navier-Stokes equations
%, either in the physical domain \eqref{eq:828191} or in the Fourier domain \eqref{FNS}, 
has resulted in many fruitful theories concerning mild solutions as well as weak solutions. 
%Second, we will give a criterion on the adapted space and path space such that the Picard's iteration converges \emph{pointwise} almost everywhere.
Typically, to obtain a solution to \eqref{FNS} on the time interval $(0,T)$, one chooses a space $E$ and a space $\mathcal{E}_T$ so that an initial data $v_0\in E$, with small norm if necessary, yields a convergent sequence $v^{(n)}$ in $\mathcal{E}_T$, where $v^{(n)}$ is defined by the Picard's iteration
\begin{equation}\label{eq:621211}{v^{(0)}}=0,\ {v^{(n+1)}}={U}(v_0)+{B}({v^{(n)}},{v^{(n)}}).\end{equation} 
%\begin{quotation}
%\emph{For what spaces $E$ and $\mathcal{E}_T$ does an initial datum $\chi_0\in E$, with smallness condition if necessary, yield a sequence $\phi_n\in\mathcal{E}_T$ that has a limit in $\mathcal{E}_T$?}
%\end{quotation}
%Kato addressed this question for the Navier-Stokes equations \cite{kato1984}, in which he found $E=L^d(\mathbb{R}^d)$ and $\mathcal{E}_T=L^{d+2}(\mathbb{R}^d\times(0,T))$. These spaces yield local existence for any initial condition, global existence for small initial conditions, and uniqueness of small solutions. Kato's method has been generalized for abstract setting (see e.g. \cite[Sec. 15.1]{lemarie2002}). Roughly speaking, simple 
To this end, it is sufficient to choose $E$ and $\mathcal{E}_T$ such that ${U}$ is a bounded linear operator from $E$ to $\mathcal{E}_T$, and ${B}$ is a bounded bilinear operator from $\mathcal{E}_T\times\mathcal{E}_T$ to $\mathcal{E}_T$ (see \cite[Ch.\ 15]{lemarie2002}). Apriori, the sequence $\{v^{(n)}(\xi,t)\}$ does not necessarily converge for almost every $(\xi,t)\in\mathbb{R}^d\times(0,T)$. For example, it is well-known that for small $v_0\in E=L^2(|\xi|^{d/2-1})$, or equivalently for small $u_0=\mathscr{F}^{-1}\{{v}_0\}$ in $\dot{H}^{d/2-1}$, the iteration \eqref{eq:621211} yields a convergent sequence $v^{(n)}$ in $\mathcal{E}_T=L^4_tL^2_\xi(|\xi|^{(d-1)/2})$, or equivalently a convergent sequence ${u}^{(n)}=\mathscr{F}^{-1}\{{v}^{(n)}\}$ in $L^4_t\dot{H}^{(d-1)/2}_x$ (\cite[Thm.\ 15.3]{lemarie2002}, \cite[Cor.\ 5.11]{bahouri}). 
%Here $\mathscr{F}^{-1}\{f\}$ denotes the inverse Fourier transform of $f$ with respect to the spatial variables. 
%In terms of the normalized functions $\chi_0$ and $\chi^{(n)}$, the corresponding adapted space and path space are
%\begin{eqnarray*}E&=&\left\{ {{\chi }_{0}}:{{\mathbb{R}}^{d}}\to {{\mathbb{C}}^{d}},~{{\left\| \chi_0 \right\|}_{E}}={{\left( \int_{{{\mathbb{R}}^{d}}}{|\xi {{|}^{d-2}}{{h}^{2}}(\xi )|{{\chi }_{0}}(\xi ){{|}^{2}}d\xi } \right)}^{1/2}}<\infty  \right\},\\
%{\mathcal{E}_{T}}&=&\left\{ \chi :{{\mathbb{R}}^{d}}\times (0,T)\to {{\mathbb{C}}^{d}},~{{\left\| \chi  \right\|}_{{\mathcal{E}_{T}}}}={{\left\{ \int_{0}^{T}{{{\left( \int_{{{\mathbb{R}}^{d}}}{|\xi {{|}^{d-1}}{{h}^{2}}(\xi )|\chi (\xi ,t){{|}^{2}}d\xi } \right)}^{2}}dt} \right\}}^{1/4}}<\infty  \right\}.\end{eqnarray*}
Without a more detailed analysis of the equation, one can only conclude that $\{v^{(n)}\}$ has a subsequence that converges pointwise almost everywhere. Note that small initial data $v_0$ in a majorization space $F_h$ guarantees the pointwise convergence (\autoref{decay}).
%The pointwise convergence of the entire sequence $\{v^{(n)}\}$ is, on one hand, a problem of its own interest. On the other hand,  the equality $v^{(n)}=c_0^{-1}h\mathbb{E}_{\xi}\mathbf{X}_n$ obtained from \autoref{531211}, where $\mathbf{X}_n$ is the $n$'th truncation of the iteration \eqref{eq:71211}, alludes to the idea of Monte-Carlo simulation of the Picard's iteration to approximate the exact solution, in which the pointwise convergence of the sequence would be desirable to validate the numerical results. 
The main purpose of this section is to give more general criteria on $v_0$, especially those that do not require pointwise smallness of $v_0$, such that the entire Picard's iteration \eqref{eq:621211} converges pointwise almost everywhere to a solution of \eqref{FNS} (\autoref{75211}). This solution is understood in the following sense.
\begin{defi}
Let $v_0:\mathbb{R}^d\to\mathbb{C}^d$ be a measurable function. A function $v=v(\xi,t)$ is an \emph{admissible solution} to \eqref{FNS}$_{v_0}$ on $\mathbb{R}^d\times(0,T)$ if there exists a subset $D\subset\mathbb{R}^d$ with full measure such that the following conditions are satisfied:
\begin{enumerate}[(i)]
\item $v$ is measurable on $D\times(0,T)$ and has the integrability property
\begin{equation}\label{intcond2}\int_{0}^{t}|\xi {{|}}{{e}^{|\xi {{|}^{2}}s}}{\int_{{{\mathbb{R}}^{d}}}{|{{v }}(\eta ,s){{\odot }_{\xi }}{{v }}(\xi-\eta ,s)|d\eta ds}<\infty }\ \ \ \forall\,(\xi,t)\in D\times(0,T).\end{equation}
\item $v$ satisfies \eqref{FNS}$_{v_0}$ on $D\times(0,T)$.
\end{enumerate}
\end{defi}
We outline the strategy as follows. First, one observes that the Picard's iteration \eqref{eq:621211} can be converted into the Picard's iteration \eqref{eq:71211} by the relation $v^{(n)}=h\chi^{(n)}/c_0$. Since $h$ is a.e.\ positive, the pointwise convergence of $\{v^{(n)}\}$ is equivalent to the pointwise convergence of $\{\chi^{(n)}\}$. 
%The adapted space and path space $(E,\mathcal{E}_T)$ of \eqref{FNS} correspond one-to-one to the adapted space and path space $(F,\mathcal{F}_T)$ of \eqref{nFNS} through the relation
%$E=hF/c_0,\ \mathcal{E}_T=h\mathcal{F}_T/c_0$.
Therefore, one only needs to focus on the Picard's iteration \eqref{eq:71211} of \eqref{nFNS}. Second, by the majorizing principle, the sequence $\{\chi^{(n)}\}$ converges pointwise almost everywhere on $\mathbb{R}^d\times(0,T)$ if \eqref{nFMS}$_{|\chi_0|}$ has a solution that is finite almost everywhere on $\mathbb{R}^d\times(0,T)$. The problem then turns into finding a suitable function setting for \eqref{nFMS}, or equivalently up to a normalization, a suitable function setting for \eqref{FMS}. This method allows the smallness of $|\chi_0|$ to be in some integral sense instead of pointwise (\autoref{75212}).
We proceed to define the function settings for \eqref{FMS} or \eqref{nFMS} that suit our purposes. In the definition below, for the sake of simplicity of language, the term ``measurable function'' refers to an equivalence class of measurable functions that are equal to each other almost everywhere.
%For measurable spaces $S_1$
%%$(S_1,\mathcal{S}_1)$ 
%and 
%$S_2$,
%%$(S_2,\mathcal{S}_2)$, 
%denote by $\mathscr{M}(S_1,S_2)$ the space of all measurable functions from $S_1$ to $S_2$. 
%For $0<T\le\infty$, let $\mathscr{M}$ be the space of all Borel measurable functions from $\mathbb{R}^d$ to $[0,\infty]$, and $\mathcal{M}_T$ be the space of all Borel measurable functions from $\mathbb{R}^d\times(0,T)$ to $[0,\infty]$.
%\begin{defi}
%Let $D\subset\mathbb{R}^k$, $k\ge 1$, be a measurable subset. A normed space $(X,\|\cdot\|)$ of measurable functions from $D$ to $\mathbb{C}^d$ is said to be $D$-\emph{admissible} if there exists a normed space $(X_\mathbb{C},\|\cdot\|_{X_\mathbb{C}})$ of measurable functions from $D$ to $\mathbb{C}$ such that the following conditions are satisfied:
%\begin{enumerate}[(A)]
%\item If $f\in X$ then $|f|\in X_{\mathbb{C}}$ and $\|f\|_{X}=\||f|\|_{X_\mathbb{C}}$.
%\item If $f,g\in X_\mathbb{C}$ and $0\le f\le g$ a.e.\ then $\|f\|_{X_\mathbb{C}}\le \|g\|_{X_\mathbb{C}}$.
%\item If $\{f_n\}$ is a sequence in $X_{\mathbb{C}}$ such that $0\le f_1\le f_2\le f_3\le\ldots$ and $\sup_n\|f_n\|_{X_\mathbb{C}}<\infty$ then $f:=\limsup f_n\in X_\mathbb{C}$ and $\|f\|_{X_\mathbb{C}}\le\sup_n\|f_n\|_{X_\mathbb{C}}$.
%\end{enumerate}
%\end{defi}
\begin{defi}\label{adspace}
Let $X$ be a normed space of measurable functions from $\mathbb{R}^d$ to $\mathbb{C}$. For some $T\in(0,\infty]$, let $\mathcal{X}_T$ be a Banach space of measurable functions from $\mathbb{R}^d\times(0,T)$ to $\mathbb{C}$. We call $X$ an \emph{adapted space}, $\mathcal{X}_T$ a \emph{path space}, and the pair $(X,\mathcal{X}_T)$ an \emph{admissible setting} of the equation $v=F_1(v_0)+F_2(v,v)$ if $F_1$ is a bounded linear map from $X$ to ${\mathcal{X}}_T$ and $F_2$ is a bounded bilinear map from ${\mathcal{X}}_T\times {\mathcal{X}}_T$ to ${\mathcal{X}}_T$.
\end{defi}
It can be observed that an admissible setting to {\eqref{FMS}}, i.e.\ the equation $v=U(v_0)+\widetilde{B}(v,v)$ where $U$ and $\widetilde{B}$ are given by \eqref{eq:71213}, corresponds one-to-one to an admissible setting to {\eqref{nFMS}}, i.e.\ the equation $v=U(v_0)+\widetilde{\mathfrak{B}}(v,v)$ where $\widetilde{\mathfrak{B}}$ are given by \eqref{eq:71216}. Indeed, if $(X,\mathcal{X}_T)$ is an admissible setting to \eqref{FMS} then $(Y=\frac{c_0}{h}X,\mathcal{Y}_T=\frac{c_0}{h}\mathcal{X}_T)$ is an admissible setting to \eqref{nFMS} and vice versa. Here the norms on $Y$ and $\mathcal{Y}_T$ are given by 
\begin{equation}\label{newnorm}\|w\|_{Y}=\|hw/c_0\|_X\ \ \forall w\in Y,\ \ \ \|w\|_{\mathcal{Y}_T}=\|hw/c_0\|_{\mathcal{X}_T}\ \ \forall w\in \mathcal{Y}_T.\end{equation}
One has the following abstract lemma.
\begin{lem}\label{abslem}
Let $(X,\mathcal{X}_T)$ be an admissible setting of the equation $w=F(w):=F_1(w_0)+F_2(w,w)$. Suppose $\|F_1(w_0\|_{\mathcal{X}_T}<\frac{1}{4\|F_2\|}$. Then $F$ is a contraction mapping from $\bar{B}_R$ to $\bar{B}_R$ with \[R=\frac{1-\sqrt{1-4\|{{F}_{2}}\|\|{{F}_{1}}({{v}_{0}}){{\|}_{{{\mathcal{X}}_{T}}}}}}{2\left\| {{F}_{2}} \right\|}.\] Consequently, $F$ has a unique fixed point in $\bar{B}_R$ which is given by the limit in $\mathcal{X}_T$ of the sequence $w^{(0)}=0$, $w^{(n+1)}=F(w^{(n)})$.
\end{lem}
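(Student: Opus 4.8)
The plan is to recognize this as the standard quadratic fixed-point argument (the ``Picard--Banach lemma'' underlying Kato-type existence theorems) and carry it out carefully. First I would set $a=\|F_1(w_0)\|_{\mathcal{X}_T}$ and $b=\|F_2\|$, the operator norm of the bounded bilinear map $F_2$, so that the hypothesis reads $a<\tfrac{1}{4b}$, equivalently $4ab<1$. This makes $\sqrt{1-4ab}$ a well-defined number in $(0,1)$ and $R=\frac{1-\sqrt{1-4ab}}{2b}$ a well-defined positive real. The key algebraic observation, which I would record up front, is that $R$ is precisely the smaller root of the quadratic $b x^2-x+a=0$; hence $a+bR^2=R$ and $2bR=1-\sqrt{1-4ab}<1$.

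Next I would verify that $F$ maps the closed ball $\bar B_R\subset\mathcal{X}_T$ into itself: for $w\in\bar B_R$, the triangle inequality together with the boundedness of $F_1$ and $F_2$ gives $\|F(w)\|_{\mathcal{X}_T}\le a+b\|w\|_{\mathcal{X}_T}^2\le a+bR^2=R$. Then I would check the contraction property. Using bilinearity to write $F_2(w_1,w_1)-F_2(w_2,w_2)=F_2(w_1-w_2,w_1)+F_2(w_2,w_1-w_2)$, one gets $\|F(w_1)-F(w_2)\|_{\mathcal{X}_T}\le b\bigl(\|w_1\|_{\mathcal{X}_T}+\|w_2\|_{\mathcal{X}_T}\bigr)\|w_1-w_2\|_{\mathcal{X}_T}\le 2bR\,\|w_1-w_2\|_{\mathcal{X}_T}$ for $w_1,w_2\in\bar B_R$, and since $2bR=1-\sqrt{1-4ab}<1$ by the algebra above, $F$ is a contraction on $\bar B_R$ with Lipschitz constant strictly less than $1$.

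Finally, since $\mathcal{X}_T$ is a Banach space and $\bar B_R$ is a closed (hence complete) subset of it containing $w^{(0)}=0$, the Banach fixed point theorem applies and yields a unique fixed point $w^\ast\in\bar B_R$, together with the convergence $w^{(n)}\to w^\ast$ in $\mathcal{X}_T$ of the iteration $w^{(n+1)}=F(w^{(n)})$. I do not expect any genuine obstacle: the only points requiring attention are the purely algebraic identities $a+bR^2=R$ and $2bR<1$, and the degenerate case $\|F_2\|=0$, in which the displayed formula for $R$ should be read as its limiting value $R=a$ (equivalently, $F_2=0$ is treated directly, the equation then being linear with unique solution $F_1(w_0)$).
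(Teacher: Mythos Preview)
Your argument is correct and is exactly the standard contraction mapping argument the paper has in mind; the paper itself simply states that the lemma is ``a simple application of the Contraction Mapping Theorem'' and omits the proof. Your write-up supplies the expected details (the algebraic identities $a+bR^2=R$ and $2bR<1$, the self-map and Lipschitz estimates, and the handling of the degenerate case $\|F_2\|=0$), so there is nothing to add or amend.
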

Here $\|F_2\|$ denotes the operator norm of $F_2$ and $\bar{B}_R$ denotes the closed ball of radius $R$ centered at 0 in $\mathcal{X}_T$. The proof of the lemma is an simple application of the Contraction Mapping Theorem and is skipped.
\begin{thm}\label{75211}
For $d\ge 3$, let $v_0:\mathbb{R}^d\to \mathbb{C}^d$ be a measurable function. We have the following statements.
\begin{enumerate}[(a)]
\item (Global solution) Let $(X,\mathcal{X}_\infty)$ be an admissible setting to {\eqref{FMS}}. Suppose that $|v_0|\in X$ and $\||{v}_0|\|_X<\frac{1}{4\|U\|\|\widetilde{B}\|}$. Then there exists a subset $D\subset \mathbb{R}^d$ with full measure such that the iteration \eqref{eq:621211} converges pointwise everywhere on $D\times(0,\infty)$ to an admissible solution of {\eqref{FNS}}$_{v_0}$.
\item (Local solution) Let $\{(X,\mathcal{X}_\tau)\}_{\tau>0}$ be a family of admissible settings to {\eqref{FMS}}. Suppose that $|v_0|\in X$ and $\lim_{\tau\to 0}\|U(|v_0|)\|_{\mathcal{X}_\tau}=0$. Then there exist a number $\delta>0$ and a subset $D\subset \mathbb{R}^d$ with full measure such that the iteration \eqref{eq:621211} converges pointwise everywhere on $D\times(0,\delta)$ to an admissible solution of \eqref{FNS}$_{v_0}$.
\end{enumerate}
\end{thm}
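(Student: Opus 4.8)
The plan is to combine \autoref{abslem} (which gives a contraction in the \emph{path space} $\mathcal{X}_\tau$ with an explicit radius $R$ and fixed point $\psi$) with the majorizing principle \autoref{69211}(d), which transfers the finiteness of a cascade solution of (nFMS)$_{|\chi_0|}$ into the existence of a pointwise cascade solution $\chi$ of (nFNS)$_{\chi_0}$, together with \autoref{531211} which identifies that cascade solution with the pointwise limit of the Picard iteration \eqref{eq:71211}. The bridge between the two is the normalization $v^{(n)}=h\chi^{(n)}/c_0$ and $\chi_0=c_0 v_0/h$, under which \eqref{eq:621211} for (FMS) becomes \eqref{eq:71217} for (nFMS), and since $h$ is a.e.\ positive, pointwise convergence of $v^{(n)}$ and of $\chi^{(n)}$ are equivalent.

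For part (a), first pass from the admissible setting $(X,\mathcal{X}_\infty)$ of (FMS) to the corresponding admissible setting $(Y,\mathcal{Y}_\infty)=(\tfrac{c_0}{h}X,\tfrac{c_0}{h}\mathcal{X}_\infty)$ of (nFMS) via \eqref{newnorm}; note $\||\chi_0|\|_Y=\||v_0|\|_X<\tfrac{1}{4\|U\|\|\widetilde B\|}=\tfrac{1}{4\|\widetilde{\mathfrak B}\|}$ (the operator norms match up to the same normalization, and $\|U(|\chi_0|)\|_{\mathcal{Y}_\infty}\le\|U\|\,\||\chi_0|\|_Y$ puts us in the hypothesis of \autoref{abslem}). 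Apply \autoref{abslem} to $F_1=U$, $F_2=\widetilde{\mathfrak B}$: the Picard iteration \eqref{eq:71217} with initial datum $|\chi_0|$ converges in $\mathcal{Y}_\infty$-norm to the unique fixed point $\psi\in\bar B_R\subset\mathcal{Y}_\infty$, which solves (nFMS)$_{|\chi_0|}$. Since norm convergence in $\mathcal{Y}_\infty$ gives a subsequence converging a.e., and $\psi\in\mathcal{Y}_\infty$ means $\psi$ is a.e.\ finite, we conclude that the minimal cascade solution of (nFMS)$_{|\chi_0|}$ — which by \autoref{69211}(a) equals the pointwise limit of \eqref{eq:71217} — coincides with $\psi$ and hence is finite a.e.\ on $\mathbb{R}^d\times(0,\infty)$. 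Now invoke \autoref{69211}(d) with $T=\infty$: there is a full-measure $D\subset\mathbb{R}^d$ on which the cascade solutions of (nFNS)$_{\chi_0}$ are well-defined, coincide, equal the pointwise limit of \eqref{eq:71211}, and satisfy $|\chi|\le\psi$. Translating back by $v=h\chi/c_0$, the iteration \eqref{eq:621211} converges pointwise on $D\times(0,\infty)$ to a function $v$ with $|v|\le h\psi/c_0$; the integrability property \eqref{intcond2} follows from \eqref{intcond} of \autoref{819191} after multiplying through by $h(\xi)/c_0$ and using $H(\eta|\xi)=h(\eta)h(\xi-\eta)/(|\xi|h(\xi))$ to convert the $|\xi|^2$ and $H$ weights into the $|\xi|$ weight of \eqref{intcond2}; and $v$ satisfies (FNS)$_{v_0}$ because $\chi$ satisfies (nFNS)$_{\chi_0}$. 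Thus $v$ is an admissible solution.

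Part (b) is the same argument localized in time: for each $\tau$, $(Y,\mathcal{Y}_\tau)=(\tfrac{c_0}{h}X,\tfrac{c_0}{h}\mathcal{X}_\tau)$ is an admissible setting of (nFMS), and the hypothesis $\lim_{\tau\to0}\|U(|v_0|)\|_{\mathcal{X}_\tau}=0$ translates to $\lim_{\tau\to0}\|U(|\chi_0|)\|_{\mathcal{Y}_\tau}=0$ (again the normalization is an isometry for these weighted norms). Hence one can pick $\delta>0$ with $\|U(|\chi_0|)\|_{\mathcal{Y}_\delta}<\tfrac{1}{4\|\widetilde{\mathfrak B}_\delta\|}$; one should check that the bilinear bound on $\mathcal{X}_\tau$ (and hence on $\mathcal{Y}_\tau$) is uniform in $\tau$, or at least that $\|\widetilde{\mathfrak B}_\delta\|$ does not blow up, which is the point that needs a small argument — typically the operator norm is nonincreasing in $\tau$, or one simply includes it in the ``$\lim_{\tau\to0}\|U(|v_0|)\|_{\mathcal{X}_\tau}=0$ versus $1/(4\|\widetilde B\|)$'' comparison by shrinking $\delta$ further. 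Then \autoref{abslem} gives a fixed point $\psi\in\mathcal{Y}_\delta$ of (nFMS)$_{|\chi_0|}$, hence $\psi<\infty$ a.e.\ on $\mathbb{R}^d\times(0,\delta)$, and \autoref{69211}(d) with $T=\delta$ finishes exactly as before, producing the full-measure $D$ and the admissible solution $v=h\chi/c_0$ on $D\times(0,\delta)$.

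The main obstacle I anticipate is not conceptual but bookkeeping: one must verify carefully that the pointwise limit of the Picard iteration \eqref{eq:71217} of (nFMS) genuinely equals the $\mathcal{Y}_T$-norm limit $\psi$ from \autoref{abslem} (a priori \autoref{abslem} only gives norm convergence, while \autoref{69211}(a)/(d) speak of the \emph{pointwise} limit, so one extracts an a.e.-convergent subsequence and argues the monotonicity of the iteration \eqref{eq:71217}, which holds since all terms are nonnegative and the iteration is monotone increasing in $n$, forcing the whole sequence to converge pointwise to the same limit as the subsequence), and that the two normalizations of the operator norms ($\|U\|\|\widetilde B\|$ on the (FMS) side versus $\|\widetilde{\mathfrak B}\|$ on the (nFMS) side) are reconciled so that the smallness threshold in the hypothesis really is the threshold required by \autoref{abslem}. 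Neither step is deep, but both must be stated cleanly for the proof to be complete.
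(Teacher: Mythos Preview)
Your approach is essentially the same as the paper's: apply \autoref{abslem} to the (FMS)/(nFMS) iteration to get a path-space fixed point $\psi$, identify $\psi$ with the cascade solution of (nFMS)$_{|\chi_0|}$ via \autoref{69211}(a), then use the majorizing principle \autoref{69211}(d) to obtain the cascade solution $\chi$ of (nFNS)$_{\chi_0}$ and translate back to $v=h\chi/c_0$, with the integrability \eqref{intcond2} coming from \eqref{intcond} in \autoref{819191}. The only cosmetic difference is ordering: the paper applies \autoref{abslem} directly in $(X,\mathcal{X}_\infty)$ and normalizes afterward, whereas you normalize first and work in $(Y,\mathcal{Y}_\infty)$; since the normalization is an isometry this is immaterial. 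Your explicit remark that the monotone increase of the nonnegative iterates $\psi^{(n)}$ forces the pointwise limit to coincide with the $\mathcal{Y}_T$-norm limit is a point the paper leaves implicit, and your flag on the behaviour of $\|\widetilde{B}\|_{\mathcal{X}_\tau}$ as $\tau\to 0$ in part (b) is likewise something the paper does not spell out.
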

\begin{proof}
We first show part (a). By \autoref{abslem}, the iteration $w^{(0)}=0$, $w^{(n+1)}=U(|v_0|)+\widetilde{B}(w^{(n)},w^{(n)})$ has a limit $w\in\mathcal{X}_\infty$. Note that $w$ is finite almost everywhere since it is equal to a function from $\mathbb{R}^d\times(0,\infty)$ to $\mathbb{C}$ almost everywhere (\autoref{adspace}). Now introduce
\[\chi_0=\frac{c_0v_0}{h},\ \ \psi^{(n)}=\frac{c_0w^{{(n)}}}{h},\ \ \psi=\frac{c_0w}{h},\ \ \mathcal{Y}_\infty=\frac{c_0}{h}\mathcal{X}_\infty\]
where $h$ is the scale-invariant kernel. The sequence $\{\psi^{(n)}\}$ satisfies the iteration \eqref{eq:71217} and converges in $\mathcal{Y}_\infty$ to the solution $\psi$ of \eqref{nFMS}$_{|\chi_0|}$. By \autoref{69211} (a), $\psi$ is a cascade solution to \eqref{nFMS}$_{|\chi_0|}$. Since $h$ is positive a.e., $\psi$ is finite a.e.\ on $\mathbb{R}^d\times(0,\infty)$. By \autoref{69211} (d), \eqref{nFNS}$_{\chi_0}$ has a cascade solution $\chi$ on $D\times(0,\infty)$ where $D\subset\mathbb{R}^d$ is a subset with full measure.  By \autoref{819191}, $\chi$ satisfies the integrability condition \eqref{intcond}. Therefore, $v=h\chi/c_0$ satisfies \eqref{FNS}$_{v_0}$ on $D\times(0,\infty)$ and has the integrability \eqref{intcond2}. This completes the proof of part (a). To show part (b), we observe that there exists $\delta>0$ such that $\|U(|v_0|)\|_{\mathcal{X}_\delta}<\frac{1}{4\|\widetilde{B}\|}$. By \autoref{abslem}, the sequence $w^{(n)}$ introduced above has a limit $w\in\mathcal{X}_\delta$. From here, the proof is essentially the same as the proof of part (a).
\end{proof}
Combining \autoref{75211} with the generalized majorizing principle, we obtain the following corollary.
\begin{cor}\label{75212}
For $d\ge 3$, let $\chi_0:\mathbb{R}^d\to \mathbb{C}^d$ be a measurable function and $f\in S$, where $S$ is the set given by \eqref{eq:73211}. Put $\phi_0=f(|\chi_0|)$. We have the following statements.
\begin{enumerate}[(a)]
\item (Global solution) Let $(Y,\mathcal{Y}_\infty)$ be an admissible setting to \eqref{nFMS}. Suppose that $\phi_0\in Y$ and $\|\phi_0\|_Y<\frac{1}{4\|U\|\|\widetilde{B}\|}$. Then there exists a subset $D\subset \mathbb{R}^d$ with full measure such that the cascade solutions to {\eqref{nFNS}}$_{\chi_0}$ are well-defined on $D\times(0,\infty)$, coincide with each other, and are given as the pointwise limit of iteration \eqref{eq:71211}.
\item (Local solution) Let $\{(Y,\mathcal{Y}_\tau)\}_{\tau>0}$ be a family of admissible settings to {\eqref{nFMS}}. Suppose that $\phi_0\in Y$ and $\lim_{\tau\to 0}\|U(\phi_0)\|_{\mathcal{Y}_\tau}=0$. Then there exist a number $\delta>0$ and a subset $D\subset \mathbb{R}^d$ with full measure such that the cascade solutions to {\eqref{nFNS}}$_{\chi_0}$ are well-defined on $D\times(0,\delta)$, coincide with each other, and are given as the pointwise limit of iteration \eqref{eq:71211}.
\end{enumerate}
\end{cor}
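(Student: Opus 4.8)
The plan is to reduce everything to \autoref{75211} together with the generalized majorizing principle \autoref{614211}. First I would observe that, by the one-to-one correspondence between admissible settings of (FMS) and (nFMS) recorded just before \autoref{abslem}, the hypothesis ``$(Y,\mathcal{Y}_\infty)$ is an admissible setting to (nFMS) with $\phi_0 \in Y$ and $\|\phi_0\|_Y < \frac{1}{4\|U\|\|\widetilde{B}\|}$'' translates into ``$(X,\mathcal{X}_\infty)$ is an admissible setting to (FMS) with $\frac{h\phi_0}{c_0} \in X$ and $\|\frac{h\phi_0}{c_0}\|_X < \frac{1}{4\|U\|\|\widetilde{B}\|}$,'' where $X = \frac{h}{c_0}Y$. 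Since $\phi_0 = f(|\chi_0|) \ge 0$, the function $v_0 := \frac{h\phi_0}{c_0}$ is nonnegative, so $|v_0| = v_0 \in X$ with the required small norm. Thus \autoref{75211}(a), applied with this $v_0$ in place of the data, produces a full-measure set and a limit $w \in \mathcal{X}_\infty$ of the iteration $w^{(n+1)} = U(v_0) + \widetilde{B}(w^{(n)},w^{(n)})$; passing to the normalized variable $\phi := \frac{c_0 w}{h}$, we find that $\phi$ is the cascade solution to (nFMS)$_{\phi_0}$ (via \autoref{69211}(a)) and, because $h > 0$ a.e., $\phi < \infty$ a.e.\ on $\mathbb{R}^d \times (0,\infty)$.

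Next I would invoke \autoref{614211} with the same $f \in S$: since $\phi$ is the cascade solution to (nFMS)$_{f(|\chi_0|)}$ and is finite a.e.\ on $\mathbb{R}^d\times(0,\infty)$, there is a full-measure subset $D \subset \mathbb{R}^d$ on which (nFNS)$_{\chi_0}$ has a minimal cascade solution $\chi$ satisfying $|\chi| \le f^{-1}(\phi)$. By \autoref{remark-generalize}, the two cascade solutions of (nFNS)$_{\chi_0}$ coincide on $\mathbb{R}^d\times(0,T)$ (here $T=\infty$) and are given by the pointwise limit of the Picard iteration \eqref{eq:71211}. That is exactly the assertion of part (a). For part (b), I would run the identical argument but apply \autoref{75211}(b) instead: the condition $\lim_{\tau\to 0}\|U(\phi_0)\|_{\mathcal{Y}_\tau} = 0$ becomes $\lim_{\tau\to 0}\|U(v_0)\|_{\mathcal{X}_\tau} = 0$ under the normalization, which is precisely the hypothesis needed to obtain a $\delta > 0$ and a full-measure $D$ with the iteration \eqref{eq:621211} converging on $D\times(0,\delta)$; the translation back to $\phi$, the finiteness-a.e.\ step, and the appeal to \autoref{614211} and \autoref{remark-generalize} go through verbatim with $T = \delta$.

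I do not anticipate a genuine obstacle here — the corollary is essentially a bookkeeping composition of results already proved — but the one point that needs a little care is making the normalization dictionary fully rigorous: checking that $X = \frac{h}{c_0}Y$ with norm \eqref{newnorm} is again a normed space, that the operator norms $\|U\|$ and $\|\widetilde{B}\|$ are genuinely unchanged under the correspondence (so the smallness threshold $\frac{1}{4\|U\|\|\widetilde{B}\|}$ matches on both sides), and that $v_0 = \frac{h\phi_0}{c_0}$ indeed lies in $X$ rather than merely being a measurable function. Once that dictionary is in place the proof is a two-line invocation of \autoref{75211} and \autoref{614211}. One should also remark explicitly that the (FMS)/(nFMS) setting used to control the data need not be related to any (FNS)/(nFNS) fixed-point setting — the whole point is that $f$ and the auxiliary scalar equation absorb the largeness of $\chi_0$, which is what lets us state the hypotheses directly in terms of $\phi_0 = f(|\chi_0|)$.
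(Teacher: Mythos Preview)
Your argument is correct in substance and ends at the same place as the paper (finiteness of the cascade solution $\phi$ to (nFMS)$_{\phi_0}$, then \autoref{614211} plus \autoref{remark-generalize}), but the route is more circuitous than necessary. The paper does not pass through \autoref{75211} at all: since $(Y,\mathcal{Y}_\infty)$ is already an admissible setting for (nFMS), it applies \autoref{abslem} directly to the iteration $\phi^{(0)}=0$, $\phi^{(n+1)}=U(\phi_0)+\widetilde{\mathfrak{B}}(\phi^{(n)},\phi^{(n)})$ to obtain $\phi\in\mathcal{Y}_\infty$ (hence finite a.e.), invokes \autoref{69211}(a) to identify $\phi$ as the cascade solution of (nFMS)$_{\phi_0}$, and then appeals to \autoref{614211} and \autoref{remark-generalize}. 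Your translation $(Y,\mathcal{Y}_\infty)\to(X,\mathcal{X}_\infty)$ and back is harmless but redundant; the dictionary worry you flag never arises if you stay on the (nFMS) side.

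One caution about how you cite \autoref{75211}(a): that theorem takes a $\mathbb{C}^d$-valued $v_0$ and outputs an admissible solution of (FNS)$_{v_0}$, not the (FMS) iterate limit $w$ you extract. What you are really using is only the opening step of its proof, namely \autoref{abslem} applied to $|v_0|$, so it is cleaner --- and matches the paper --- to cite \autoref{abslem} directly rather than the packaged theorem.
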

\begin{proof}
By \autoref{abslem}, the iteration $\phi^{(0)}=0$, $\phi^{(n+1)}=U(\phi_0)+\widetilde{\mathfrak{B}}(\phi^{(n)},\phi^{(n)})$ has a limit $\phi\in\mathcal{Y}_\tau$, which is finite almost everywhere on $\mathbb{R}^d\times(0,\infty)$. By \autoref{69211} (a), $\phi$ is the cascade solution to \eqref{FMS}$_{\phi_0}$. By \autoref{614211},  both cascade solutions to \eqref{FNS}$_{\chi_0}$ are well-defined on $D\times(0,\infty)$, where $D\subset\mathbb{R}^d$ is a subset with full measure, and coincide with each other. The proof of part (b) is similar.
\end{proof}
%In the next section, we present an example of admissible setting to \eqref{FNS}.
We end this section by giving some examples of admissible settings to \eqref{FMS}. The homogeneous Herz spaces (introduced by Herz \cite{herz}) are a family of normed spaces $\dot{K}^\alpha_{p,q}(\mathbb{R}^d)$ defined by
\[\dot{K}_{p,q}^{\alpha }(\mathbb{R}^d)=\left\{ f\in L_{\text{loc}}^{p}({{\mathbb{R}}^{d}}\backslash \{0\}),{{\left\| f \right\|}_{\dot{K}_{p,q}^{\alpha }}}={{\left\| {{\left\{ {{\left\| |\xi {{|}^{\alpha }}f \right\|}_{{{L}^{p}}({{A}_{k}})}} \right\}}_{k\in \mathbb{Z}}} \right\|}_{{{l}^{q}}(\mathbb{Z})}}<\infty  \right\}\]
where $1\le p,q\le\infty$, $\alpha\in\mathbb{R}$ and $A_k=\{\xi\in\mathbb{R}^d:\,2^k\le |\xi|\le 2^{k+1}\}$. Under suitable ranges\footnote{Specifically, $1\le p\le\infty, 1<q\le\infty,\alpha<d-d/p$ or $1\le p\le\infty, q=1,\alpha\le d-d/p$ (\cite[Lemma 2.2]{tsutsui}).} of $p,q,\alpha$, functions in $\dot{K}_{p,q}^{\alpha }$ are tempered distributions,
%if $1\le p\le\infty$, $1<q\le\infty$, $\alpha<d-d/p$ or $1\le p\le\infty$, $q=1$, $\alpha\le d-d/p$. 
and thus $\dot{K}_{p,q}^{\alpha }$ is the image under the Fourier transform (or inverse Fourier transform) of the well-known homogeneous Fourier-Besov space $F\dot{B}_{p,q}^{\alpha }$, whose definition will be given in \autoref{fourierbesov}. When there is a need to specify the codomain, we will write $\dot{K}_{p,q}^{\alpha }(\mathbb{R}^d;\mathbb{C})$ or $\dot{K}_{p,q}^{\alpha }(\mathbb{R}^d;\mathbb{C}^d)$.
\begin{prop}\label{list}\footnote{Strictly speaking, the references mentioned here show the boundedness of the bilinear map $B$ instead of $\widetilde{B}$. However, their proofs do not use any special structure of the circle-dot product other than the fact that $|a\odot_\xi b|\le |a||b|$. Therefore, the proof of the boundedness of $\widetilde{B}$ follows verbatim.}
For $d\ge 1$, $T\in(0,\infty]$, $1\le p,q\le\infty$, $\alpha=d-1-d/p$, let $X=\dot{K}_{p,q}^{\alpha }(\mathbb{R}^d;\mathbb{C})$. The pair $(X,\mathcal{X}_T)$ is an admissible setting to {\eqref{FMS}} in the following cases. 
\begin{enumerate}[(a)]
\item $\mathcal{X}_T=L^q((0,T),L^p(|\xi|^{\alpha+2/q}))$ where $1<p<\infty$, $2<q<\infty$. \textup{(\cite[Thm.\ 8.11]{lemarie2016})}
\item $\mathcal{X}_T=L^p(|\xi|^\alpha)L^\infty_t((0,T))$ and $\frac{d}{d-1}\le p=q<\infty$. \textup{(\cite[Thm.\ 8.12]{lemarie2016})}
\item $\mathcal{X}_T=L^2((0,T),L^1)$ and $p=1,q=2$. \textup{(\cite[Thm.\ 8.14]{lemarie2016})}
\item $\mathcal{X}_T=L^\infty((0,T),\dot{K}^{\alpha}_{p,q})\cap L^1((0,T),\dot{K}^{\alpha+2}_{p,q})$ where $1<p\le\infty$, $1\le q\le\infty$. \textup{(\cite{xiao2014,lizheng})}\\
If $1<p\le\infty$, $1\le q<\infty$ or $p=1$, $1\le q\le 2$ then $\lim_{\tau\to 0}\|U(v_0)\|_{\mathcal{X}_\tau}=0$ for all $v_0\in X$. \textup{(\cite[Thm.\ 1.1]{lizheng})}
\item $\mathcal{X}_T=L^\infty((0,T),\dot{K}^{\alpha}_{p,q})\cap L^1((0,T),\dot{K}^{\alpha+2}_{p,q})$ where $p=1$, $1\le q\le 2$. \textup{(\cite{cannonewu, xiao2014})}
\end{enumerate}
\end{prop}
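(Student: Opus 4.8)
The plan is to verify directly the two defining conditions of \autoref{adspace}: that $U\colon X\to\mathcal{X}_T$ is a bounded linear map and that $\widetilde B\colon\mathcal{X}_T\times\mathcal{X}_T\to\mathcal{X}_T$ is a bounded bilinear map. The key reduction, already flagged in the footnote to the statement, is that $\widetilde B$ is obtained from the Leray-projected operator $B$ of \eqref{eq:71214} simply by replacing $\odot_\xi$ with ordinary multiplication of scalars; since every boundedness proof for $B$ in the cited works uses no property of $\odot_\xi$ beyond $|a\odot_\xi b|\le|a||b|$, each such estimate transfers verbatim to $\widetilde B$ (and, after the normalization \eqref{newnorm}, to $\widetilde{\mathfrak{B}}$). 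Thus the proof amounts to invoking, case by case, the cited result and recording the minor adjustments of parameter conventions.

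For the linear bound one writes $U(v_0)(\xi,t)=e^{-t|\xi|^2}v_0(\xi)$ and reduces $\|U(v_0)\|_{\mathcal{X}_T}\lesssim\|v_0\|_X$ to the behaviour of $t\mapsto e^{-t\lambda}$ tested against the dyadic structure of the Herz norm: on the annulus $A_k$ one has $|\xi|\asymp 2^k$, so the relevant time norm of $e^{-t|\xi|^2}$ produces exactly the shift from the exponent $\alpha$ to $\alpha+2/q$ in case (a), the $L^\infty_t$ estimate in case (b), the $L^2_t L^1$ bound in case (c), and the heat maximal-regularity pair $L^\infty_t\dot K^\alpha_{p,q}\cap L^1_t\dot K^{\alpha+2}_{p,q}$ in cases (d)--(e). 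In each instance the arithmetic closes because $\alpha=d-1-d/p$ is the scale-critical exponent for $F\dot B^\alpha_{p,q}$.

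For the bilinear bound the common mechanism is to write
\[
\widetilde B(f,g)(\xi,t)=c_0\int_0^t e^{-s|\xi|^2}|\xi|\,\bigl(f(\cdot,t-s)*g(\cdot,t-s)\bigr)(\xi)\,ds,
\]
control the frequency convolution by a Herz-space product law (the analogue of the critical Besov product estimate on $\dot B^{d-1-d/p}_{p,q}$), and then absorb the smoothing factor $e^{-s|\xi|^2}|\xi|$ together with the $ds$-integral by H\"older in time -- again the exponent bookkeeping is precisely the Navier--Stokes scaling fixed-point condition. The only genuinely delicate configurations are the endpoints $p=1$ (cases (c) and (e)) and the boundary values of $q$, for which the crude Young inequality is insufficient and one must use the sharper convolution and embedding arguments of \cite{cannonewu,xiao2014,lizheng,lemarie2016}; I would cite these directly rather than reproduce them.

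Accordingly I expect the main obstacle to be expository rather than mathematical: reconciling five separate sources -- each phrased for $B$ and for the (FNS) normalization, with slightly different parameter conventions -- and checking in each that $\odot_\xi$ enters only through $|a\odot_\xi b|\le|a||b|$ and that the parameter ranges listed in \autoref{list} agree with those of the source. Once that reconciliation is carried out, nothing further needs to be proved.
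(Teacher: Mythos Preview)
Your proposal is correct and matches the paper's treatment: the paper gives no proof of this proposition at all, treating it purely as a compendium of known results with citations, the only argument being the footnote observation (which you reproduce) that the cited bounds on $B$ transfer verbatim to $\widetilde B$ because $|a\odot_\xi b|\le|a||b|$ is the sole property of $\odot_\xi$ used. Your additional sketch of the linear and bilinear estimates is extra detail the paper does not supply, but it is consistent with the cited sources and does not diverge from them.
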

Here $L^p(\phi)$ denotes the weighted Lebesgue space defined by
\[L^p(\phi)=\left\{f:\mathbb{R}^d\to\mathbb{C},\ \|f\|_{L^p(\phi)}=\left(\int_{\mathbb{R}^d}|f|^p\phi^pdx\right)^{1/p}<\infty\right\}.\]
%\begin{rem}
%Strictly speaking, the mentioned references show the boundedness of the bilinear map $B$ instead of $\widetilde{B}$. However, their proofs do not use any special structure of the circle-dot product other than the fact that $|a\odot_\xi b|\le |a||b|$. Thus, the proof of the boundedness of $\widetilde{B}$ is verbatim.
%\end{rem}
% We gather here a list of results 
%application of \autoref{75211} and \autoref{75212} to the Navier-Stokes equations with initial data in the critical homogeneous Fourier-Besov spaces.
% $F\dot{B}^{\alpha}_{p,q}(\mathbb{R}^d)$ where $1\le p,q\le\infty$ and $\alpha=d/p-d-1$. 
%The homogeneous Fourier-Besov spaces $F\dot{B}^{\alpha}_{p,q}(\mathbb{R}^d)$ are defined by
\subsection{On the global well-posedness in Fourier-Besov spaces}\label{fourierbesov}
As mentioned in the previous section, $\dot{K}_{p,q}^{\alpha }$ is the image under the Fourier transform of the homogeneous Fourier-Besov space $F\dot{B}_{p,q}^{\alpha }$  defined by
\[F\dot{B}_{p,q}^{\alpha }(\mathbb{R}^d)=\left\{ f\in \frac{\mathcal{S}'}{\mathcal{P}}:\ \mathscr{F}\{f\}\in L_{\text{loc}}^{p}({{\mathbb{R}}^{d}}\backslash \{0\}),\ {{\left\| f \right\|}_{F\dot{B}_{p,q}^{\alpha }}}={{\left\| {{\left\{ {{\||\xi|^\alpha \mathscr{F}\{f\} \|}_{{{L}^{p}}(A_k)}} \right\}}_{k\in \mathbb{Z}}} \right\|}_{{{l}^{q}}(\mathbb{Z})}}<\infty  \right\}\]
where $\mathcal{S}'$ denotes the space of tempered distributions and $\mathcal{P}$ denotes the space of polynomials (i.e.\ the tempered distributions whose Fourier transforms are supported at the origin). The following inclusions are straightforward from the definition 
\begin{eqnarray*}&&F\dot{B}^\alpha_{p,q_1}\subset F\dot{B}^\alpha_{p,q_2} \ \ \forall\,1\le q_1\le q_2\le\infty,\\
&&F\dot{B}^{\alpha_1}_{p_1,q}\subset F\dot{B}^{\alpha_2}_{p_2,q}\ \ \forall\, 1\le p_2\le p_1\le\infty,\,\alpha_1+\frac{d}{p_1}=\alpha_2+\frac{d}{p_2}.
\end{eqnarray*}
The homogeneous Sobolev spaces $\dot{H}^s=F\dot{B}^s_{2,2}$, the homogeneous Fourier-Herz spaces $\dot{\mathcal{B}}^s_q=F\dot{B}^s_{1,q}$ introduced by Cannone and Wu \cite{cannonewu}, the space $\mathcal{X}^{-1}=F\dot{B}^{-1}_{1,1}$ introduced by Lei and Lin \cite{linlei2011}, and the spaces ${PM}^a=F\dot{B}^a_{\infty,\infty}$ introduced by Cannone and Karch \cite{cannonekarch} are special cases of the Fourier-Besov spaces. The global well-posedness of the \eqref{NS} with initial data in the scale-critical space $F\dot{B}_{p,q}^{d-1-d/p }$ has been studied by many authors, e.g.\ \cite{cannonewu, xiao2014, konieczny, lemarie2016}. 
%Some known results are listed in \autoref{list}. 
The common strategy is to show the boundedness of the bilinear map $B$ on a suitably chosen path space. \autoref{list} contains a list of some known results. In this section, we combine the generalized majorizing principle (\autoref{614211}) with the global-wellposedness in the space $F\dot{B}^{-1}_{1,1}$, which was shown in \cite{linlei2011}, to give an alternative proof for the global-wellposedness of \eqref{NS} in the spaces $F\dot{B}^{d-1-d/p}_{p,q}$ with $1\le q\le p<\infty$. Our method exploits the  symmetry property
$|\chi_0|\to |\chi_0|^p$, $\mathbf{Y}\to \mathbf{Y}^p$ of \eqref{eq:819191} rather than direct estimates on the bilinear map. 

Because $\|u_0\|_{F\dot{B}^\alpha_{p,q}}=\|\mathscr{F}\{{u}_0\}\|_{\dot{K}^\alpha_{p,q}}$, the global well-posedness of \eqref{NS} in $F\dot{B}^\alpha_{p,q}$ is equivalent to the global well-posedness of \eqref{FNS} in $\dot{K}^\alpha_{p,q}$.
%the generalized majorizing principle (\autoref{614211}) 
%with the choice $f(x)=x^p$ 
%instead of direct estimates on the bilinear operator.
\begin{prop}\label{simpleproof}
For $d\ge 3$, $1\le q\le p<\infty$, $\alpha=d-1-d/p$, there exists $\varepsilon=\varepsilon_{p,d}>0$ such that if $v_0\in\dot{K}_{p,q}^\alpha(\mathbb{R}^d,\mathbb{C}^d)$ and $\|v_0\|_{\dot{K}_{p,q}^\alpha}<\varepsilon$ then for some subset $D\subset \mathbb{R}^d$ with full measure, the iteration \eqref{eq:621211} converges pointwise on $D\times(0,\infty)$ to an admissible solution of {\eqref{FNS}}$_{v_0}$. This solution belongs to the spaces $\mathcal{X}_T$ as listed in \autoref{list} \textup{(}depending on the range of $p$ and $q$\textup{)}.
\end{prop}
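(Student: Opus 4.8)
The plan is to derive the statement from the generalized majorizing principle (\autoref{614211}, together with \autoref{75212}) applied to the multiplicative function $f(x)=x^{p}$, so that the role of any direct estimate on the bilinear operator is played instead by the symmetry $|\chi_0|\to|\chi_0|^{p}$, $\mathbf Y\to\mathbf Y^{p}$ of the process \eqref{eq:819191}. Take $h=h_{\mathrm{in}}(\xi)=c_d|\xi|^{1-d}$ as the standard majorizing kernel and set $\chi_0=c_0v_0/h$, $\phi_0=f(|\chi_0|)=|\chi_0|^{p}$; since $p\ge q\ge1$ the power function $f$ belongs to the class $S$ of \eqref{eq:73211}. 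The scale-critical Herz space for the exponent $1$ is $\dot K^{-1}_{1,1}$ (i.e.\ $\alpha=d-1-d=-1$), and by \autoref{list}(e) with $p=q=1$ --- equivalently by the Lei--Lin-type global well-posedness of the Montgomery--Smith equation in $F\dot B^{-1}_{1,1}$, the analogue of \cite{linlei2011}, which holds verbatim because its proof uses only $|a\odot_\xi b|\le|a||b|$ --- the pair $(X,\mathcal X_\infty)$ with $X=\dot K^{-1}_{1,1}$ and $\mathcal X_\infty=L^\infty((0,\infty),\dot K^{-1}_{1,1})\cap L^1((0,\infty),\dot K^{1}_{1,1})$ is an admissible setting to (FMS). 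Let $(Y,\mathcal Y_\infty)=(\tfrac{c_0}{h}X,\tfrac{c_0}{h}\mathcal X_\infty)$ be the corresponding admissible setting to (nFMS), with norm \eqref{newnorm}. The whole argument then hinges on bounding $\|\phi_0\|_Y$ by a power of $\|v_0\|_{\dot K^{\alpha}_{p,q}}$.

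That bound is a homogeneity computation. By \eqref{newnorm}, $\|\phi_0\|_Y=\|h\phi_0/c_0\|_{\dot K^{-1}_{1,1}}$; inserting $\chi_0=c_0v_0/h$ and $h(\xi)=c_d|\xi|^{1-d}$ gives $h\phi_0/c_0=(c_0/c_d)^{p-1}|v_0|^{p}|\xi|^{(d-1)(p-1)}$. Since $(d-1)(p-1)-1=(d-1)p-d=p\alpha$ for $\alpha=d-1-d/p$, on each dyadic annulus $A_k$ one has
\begin{equation*}
\bigl\||\xi|^{-1}h\phi_0/c_0\bigr\|_{L^1(A_k)}=(c_0/c_d)^{p-1}\int_{A_k}|v_0|^{p}|\xi|^{p\alpha}\,d\xi=(c_0/c_d)^{p-1}a_k^{\,p},\qquad a_k:=\bigl\||\xi|^{\alpha}v_0\bigr\|_{L^p(A_k)}.
\end{equation*}
Summing over $k\in\mathbb Z$ and using the embedding $\ell^q\hookrightarrow\ell^p$ (valid precisely because $q\le p$), $\sum_k a_k^{p}\le\bigl(\sum_k a_k^{q}\bigr)^{p/q}=\|v_0\|_{\dot K^{\alpha}_{p,q}}^{p}$, whence $\|\phi_0\|_Y\le C_{p,d}\|v_0\|_{\dot K^{\alpha}_{p,q}}^{p}$ with $C_{p,d}=(c_0/c_d)^{p-1}$. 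Choosing $\varepsilon=\varepsilon_{p,d}>0$ so small that $C_{p,d}\varepsilon^{p}$ is below the smallness threshold $(4\|U\|\|\widetilde B\|)^{-1}$ of \autoref{75212}(a) for the setting $(Y,\mathcal Y_\infty)$, the assumption $\|v_0\|_{\dot K^{\alpha}_{p,q}}<\varepsilon$ gives $\|\phi_0\|_Y<(4\|U\|\|\widetilde B\|)^{-1}$.

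Now \autoref{75212}(a) produces a full-measure set $D\subset\mathbb R^d$ on which the two cascade solutions of (nFNS)$_{\chi_0}$ are well-defined, coincide, and equal the pointwise limit $\chi$ of the iteration \eqref{eq:71211}. Because $h$ is finite and strictly positive on $\mathbb R^d\backslash\{0\}$ and \eqref{eq:621211} turns into \eqref{eq:71211} under $v^{(n)}=h\chi^{(n)}/c_0$, the iteration \eqref{eq:621211} converges pointwise on $D\times(0,\infty)$ to $v:=h\chi/c_0$; the identity $h(\eta)h(\xi-\eta)=|\xi|h(\xi)H(\eta|\xi)$ converts (nFNS)$_{\chi_0}$ into (FNS)$_{v_0}$ and converts the integrability \eqref{intcond} of $\chi$ (supplied by \autoref{819191}) into \eqref{intcond2} for $v$, so $v$ is an admissible solution of (FNS)$_{v_0}$. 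Finally, for each $(p,q)$ in the stated range at least one pair $(\dot K^{\alpha}_{p,q},\mathcal X_T)$ in \autoref{list} is an admissible setting to (FMS), hence to (FNS); after shrinking $\varepsilon$ so that $\|v_0\|_{\dot K^{\alpha}_{p,q}}$ also meets the threshold of \autoref{abslem} for that setting, the iteration \eqref{eq:621211} additionally converges to a fixed point in $\mathcal X_T$, which must agree a.e.\ with the pointwise limit $v$; hence $v\in\mathcal X_T$.

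I expect the main difficulty to lie in the homogeneity bookkeeping of the second paragraph, where the hypotheses enter essentially: one must arrange that de-normalizing $|\chi_0|^{p}$ lands \emph{exactly} in the critical Herz space $\dot K^{-1}_{1,1}$ of the Montgomery--Smith equation, which pins down $\alpha=d-1-d/p$ (so the weight on $A_k$ is precisely $|\xi|^{p\alpha}$), while the passage $\ell^q\hookrightarrow\ell^p$ needs $q\le p$; one also has to verify that $C_{p,d}$ and the operator norms entering the threshold stay finite for each fixed $p$ (they may depend on $p$, which is permitted). The remaining steps are essentially a change of variables through the normalization $v=h\chi/c_0$ and appeals to the machinery already assembled in the earlier sections.
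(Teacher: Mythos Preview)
Your proof is correct and follows essentially the same route as the paper's: normalize by the scale-invariant kernel, set $\phi_0=|\chi_0|^{p}$, compute that $\|\phi_0\|_Y=(c_0/c_d)^{p-1}\|v_0\|_{\dot K^{\alpha}_{p,p}}^{p}\le (c_0/c_d)^{p-1}\|v_0\|_{\dot K^{\alpha}_{p,q}}^{p}$ via the $\ell^q\hookrightarrow\ell^p$ inclusion, and then invoke the generalized majorizing principle with $f(x)=x^p$ together with the Lei--Lin admissible setting $(\dot K^{-1}_{1,1},\mathcal X_\infty)$ for (FMS). The only cosmetic difference is that you route through \autoref{75212}(a) while the paper applies \autoref{614211} directly after first producing the finite $\phi\in\mathcal Y_\infty$ via \autoref{abslem}; these are equivalent. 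Your final paragraph also supplies an argument for the membership $v\in\mathcal X_T$ (matching the Picard iterates in norm and pointwise), a point the paper's own proof leaves implicit.
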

\begin{proof}
In \cite{linlei2011}, it was shown that $X=\dot{K}^{-1}_{1,1}(\mathbb{R}^d;\mathbb{C})$ and $\mathcal{X}_\infty=L^\infty((0,\infty),\dot{K}^{-1}_{1,1})\cap L^1_t((0,\infty),\dot{K}^1_{1,1})$ are a pair of adapted space and path space of \eqref{FMS}. Let $h$ be the scale-invariant kernel $h(\xi)=c_d|\xi|^{1-d}$ and let $Y=\frac{c_0}{h}X$ and $\mathcal{Y}_\infty=\frac{c_0}{h}\mathcal{X}_\infty$ be the normed spaces whose norms are given by \eqref{newnorm}. Then ($Y$,$\mathcal{Y}_\infty$) is an admissible setting of \eqref{nFMS}. Hence, there exists $\delta>0$ such that if $\psi_0\in Y$ and $\|\psi_0\|_Y<\delta$ then \eqref{nFMS}$_{\psi_0}$ has a solution $\psi\in \mathcal{Y}_\infty$. Now let $v_0\in\dot{K}_{p,q}^\alpha(\mathbb{R}^d,\mathbb{C}^d)$. Denote $\chi_0=c_0v_0/h$ and $\phi_0=|\chi_0|^p$. Then
\begin{eqnarray*}
{{\left\| {{\phi }_{0}} \right\|}_{Y}}={{\left\| \frac{h{{\phi }_{0}}}{{{c}_{0}}} \right\|}_{X}}=\frac{{{c}_{d}}}{{{c}_{0}}}\int_{{{\mathbb{R}}^{d}}}{|\xi {{|}^{-d}}{{\phi }_{0}}d\xi }&=&{{\left( \frac{{{c}_{0}}}{{{c}_{d}}} \right)}^{p-1}}\left\| {{v}_{0}} \right\|_{\dot{K}_{p,p}^{\alpha }}^{p}\le {{\left( \frac{{{c}_{0}}}{{{c}_{d}}} \right)}^{p-1}}\left\| {{v}_{0}} \right\|_{\dot{K}_{p,q}^{\alpha }}^{p}.
\end{eqnarray*}
Choose $\varepsilon=\delta^{1/p}(c_d/c_0)^{1/p'}$, where $1/p+1/p'=1$. Suppose $\|v_0\|_{\dot{K}_{p,q}^\alpha}<\varepsilon$. Then $\|\phi_0\|_Y<\delta$. This implies that \eqref{nFMS}$_{\phi_0}$ has a solution $\phi\in \mathcal{Y}_\infty$, which is finite almost everywhere on $\mathbb{R}^d\times(0,\infty)$. Applying \autoref{614211} for $f(x)=x^p$, we conclude that \eqref{nFNS}$_{\chi_0}$ has a cascade solution $\chi$ that is well-defined on $D\times(0,\infty)$ for some subset $D\subset\mathbb{R}^d$ with full measure, and is given by the pointwise limit of the iteration \eqref{eq:71211}. Moreover, $\chi$ has the integrability property \eqref{intcond}. Therefore, $v=h\chi/c_0$ is an admissible solution to \eqref{FNS}$_{v_0}$.
\end{proof}
\subsection{Nonuniqueness and blowup phenomena of the Montgomery-Smith equation}\label{nonuniqueness}
It is clear from \autoref{69211} (c) that the functions $\psi_1(\xi,t)=\mathbb{P}_\xi(\zeta>t)$ and $\psi_2(\xi,t)\equiv 1$ are solutions to \eqref{FMS}$_{\psi_0\equiv 1}$. Consequently, if for a given standard majorizing kernel $h$ the problem \eqref{FMS}$_{\psi_0\equiv 1}$ has a unique solution in the ball $\{\|\psi\|_{L^\infty}\le 1\}$, then the stochastic cascade is almost surely nonexplosive for every $\xi\in\mathbb{R}^d\backslash\{0\}$. The converse is also true (see \cite[Prop.\ 2.1]{chaos}). It was shown in \cite{part2} that the cascade corresponding to the scale-invariant kernel $h_{\text{in}}(\xi)=\pi^{-3}|\xi|^{-2}$ in $\mathbb{R}^3$ is almost surely explosive for every $\xi\neq 0$, i.e.\ $\mathbb{P}_\xi(\zeta<\infty)=1$. We summarize this observation as follows.
\begin{prop}
For $d=3$, the Cauchy problem {\eqref{MS}} with the scale-invariant initial data $u_0(x)=\frac{2}{\pi}\frac{1}{|x|}$ has at least two solutions: the time-decaying solution $u_1(x,t)=(\frac{2}{\pi})^{3/2}\mathscr{F}^{-1}\{|\xi|^{-2}\mathbb{P}_\xi(\zeta>t)\}$ and the time-independent solution $u_2(x,t)=u_0(x)$.
\end{prop} 
\begin{proof}
Observe that $\mathscr{F}\{u_1\}=\frac{\psi_1h}{c_0}$ and $\mathscr{F}\{u_2\}=\frac{\psi_2h}{c_0}$, where $h(\xi)=\pi^{-3}|\xi|^{-2}$.
\end{proof}
%Here $\mathscr{F}^{-1}$ denotes the inverse Fourier transform with respect to $\xi$. 
\begin{rem}It is not clear how this method can be adapted to the Navier-Stokes equations to show the nonuniqueness of solutions. The same arguments simply do not work if the product of scalars is replaced by the circle-dot of vectors. %The circle-dot product encodes cancellation properties manifested by, for example, the energy identity which is not available for the Montgomery-Smith equation. 
Interested readers may refer to \cite{jiasverak2014} for a discussion on the possible lack of uniqueness of scale-invariant solutions for large scale-invariant data. 
\end{rem}
For $d=3$, the stochastic cascade corresponding to the Bessel kernel $h_\text{b}(\xi)=\frac{1}{2\pi}e^{-|\xi|}|\xi|^{-1}$ was shown to be almost surely non-explosive in \cite{part1,part2} by probabilistic methods. Below we give an analytic proof of this fact which exploits the aforementioned connection between the uniqueness of solutions and the non-explosion of the associated stochastic cascade.
\begin{prop}\label{nonexplosion}
%For $d=3$, the Cauchy problem \eqref{MS} with the initial %data $u_0(x)=\frac{2}{1+|x|^2}$ has a unique mild solution in 
%$\cap_{0<T<\infty}L^5(\mathbb{R}^3\times (0,T))$, 
%$C([0,\infty),L^3(\mathbb{R}^3))$, which is $u(x,t)=u_0(x)$. %As a consequence, 
The stochastic cascade corresponding to the Bessel kernel is almost surely non-explosive for all $\xi\in\mathbb{R}^3\backslash\{0\}$.
\end{prop}
\begin{proof}
Let us consider the Cauchy problem \eqref{MS} with the initial data $u_0(x)=\frac{2}{1+|x|^2}$. It is clear that $u_0\in L^p(\mathbb{R}^3)$ for all $p>3/2$. One can also check that $\mathscr{F}\{u_0\}=h/c_0$ where $h(\xi)=\frac{1}{2\pi}e^{-|\xi|}|\xi|^{-1}$. Recall that a mild solution to \eqref{MS} is a solution obtained by applying Banach fixed-point theorem to the equation
\[u={{e}^{\Delta t}}{{u}_{0}}+\int_{0}^{t}{\sqrt{-\Delta }{{e}^{\Delta (t-s)}}{{u}^{2}}(s)ds}.\]
The integrand can be expressed as $G(t-s)*u^2(s)$ where $G$ is a kernel with a scaling property $G(\lambda x,\lambda^2 t)=\lambda^{-4}G(x,t)$ for all $\lambda>0$. By choosing $\lambda=t^{-1/2}$, one can write $G(x,t)={{t}^{-2}}\tilde{G}(x/\sqrt{t})$, where $\mathscr{F}\{\tilde{G}\}=|\xi|e^{-|\xi|^2}$, and obtain the estimate
\begin{equation}\label{gestimate}{{\left\| \nabla ^mG(t) \right\|}_{{{L}^{q}}({{\mathbb{R}}^{3}})}}\le {{C}_{q}}{{t}^{\frac{3}{2q}-\frac{m}{2}-2}}\ \ \ \forall\, 1\le q\le \infty,\ \forall\,m=0,1,2,\ldots\end{equation}
%For the Navier-Stokes equations, the kernel $\tilde{G}$ is replaced by a kernel $\bar{G}$ whose Fourier transform is $\left(I_3-\frac{\xi\otimes\xi}{|\xi|^2}\right)\xi e^{-|\xi|^2}$. Because $\tilde{G}$ and $\bar{G}$ both decay at the rate $O(|x|^{-4})$ as $x\to\infty$, 
The regularity theory of mild solutions to \eqref{MS} is similar to that of (NSE). In particular, \eqref{MS} has a unique mild solution in the critical space $\cap_{0<T<T^*}L^5(\mathbb{R}^3\times (0,T))$ where $T^*\in(0,\infty]$ is the maximal time of existence (see e.g.\ \cite{kato1984, fabes72}, \cite[Prop.\ 4.2]{thesis}).
%Moreover, if $T^*<\infty$ then $\|u\|_{L^5(\mathbb{R}^3\times(0,T^*))}=\infty$  
Because $\psi_1(\xi,t)=\mathbb{P}_\xi(\zeta>t)$ and $\psi_2(\xi,t)\equiv 1$ are solutions to \eqref{FMS}$_{\psi_0\equiv 1}$, $u_1=\mathscr{F}^{-1}\{\psi_1h/c_0\}$ and $u_2=\mathscr{F}^{-1}\{\psi_2h/c_0\}$ are solutions to \eqref{MS}. By Hausdorff-Young inequality,
\[{{\left\| {{u}_{k}}(t) \right\|}_{{{L}^{5}}}}\lesssim {{\left\| \mathscr{F}\{{{u}_{k}}(t)\} \right\|}_{{{L}^{5/4}}}}\lesssim {{\left\| h \right\|}_{{{L}^{5/4}}}}<\infty\ \ \ \forall k=1,2,\,\forall t>0.\]
By the uniqueness of solutions of \eqref{MS}, $u_1=u_2$ a.e. Thus, for every $\xi\in\mathbb{R}^3\backslash\{0\}$, $\psi_1(\xi,t)=\psi_2(\xi,t)=1$ for a.e.\ $t>0$. By the continuity in $t$ of $\psi_1$ (\autoref{69211} (c)), one obtains $\psi_1\equiv 1$.
%By \cite[Thm.\ 15.3]{lemarie2002}, there exists $0<T^*\le \infty$ such that \eqref{MS} has a mild solution $u\in C([0,T^*),L^3)$ and moreover, this solution is unique in the class $\{w\in C([0,T^*),L^3):\ \sqrt{t}w\in \cap_{0<T<T^*}L^\infty(\mathbb{R}^3\times(0,T)),\ \lim_{t\to 0}\sqrt{t}\|w(t)\|_{L^\infty}=0\}$.
\end{proof}
The case $u_0(x)=\frac{2}{1+|x|^2}$ corresponds to $\psi_0=c_0\hat{u}_0/h\equiv 1$, which is, at least at an intuitive level, a critical value of $\psi_0$ that guarantees the finiteness of the expectation  of
\begin{equation}\label{eq:920211}\mathbf{Y}(\xi ,t,\omega)=\prod\limits_{{v}\in V(\xi ,t)}{{{\psi }_{0}}({{W}_{{v}}})}.\end{equation}
Whether the solution issued from the initial data $u^{(a)}_{0}$ for $a>1$, where
\begin{equation}\label{u0a}u^{(a)}_{0}(x)=\frac{2a}{1+|x|^2}\end{equation}
exhibits finite-time blowup is an interesting question. Because $u_0^{(a)}$ belongs to the critical space $L^3$, the solution (denoted by $u^{(a)}$) exists and is unique in $L^5(\mathbb{R}^3\times(0,T))$ for some $T>0$. Hence, it is natural to study the blowup in the critical setting $L^5_{x,t}$. We say that a function $f(x,t)$ blows up at time $T\in(0,\infty]$ if $\|f\|_{L^5(\mathbb{R}^3\times(0,\tau))}\uparrow\infty$ as $\tau\uparrow T$.
\begin{prop}
For any $a>1$, the Cauchy problem {\eqref{MS}} on $\mathbb{R}^3$ with initial data $u_0^{(a)}$ given by \eqref{u0a} does not have a mild solution in $L^5(\mathbb{R}^3\times(0,T))$ for any $T\ge\max\left\{1,\frac{9(1+\ln 2)^2}{16(\ln a)^2}\right\}$.
\end{prop}
\begin{proof}
Let $h(\xi)=\frac{1}{2\pi}e^{-|\xi|}|\xi|^{-1}$. For each $a>0$, the initial data $u^{(a)}_0$ corresponds to $\psi_0^{(a)}=c_0\mathscr{F}\{u^{(a)}\}/h=a$. Let $\psi^{(a)}$ be the cascade solution to \eqref{nFMS}$_a$. Then the mild solution to \eqref{MS} with initial data $u_0^{(a)}$ is $u^{(a)}=\mathscr{F}^{-1}\{h\psi^{(a)}/c_0\}$. Due to the non-explosion of the Bessel cascade (\autoref{nonexplosion}), $\psi^{(a)}=\mathbb{E}\mathbf{Y}^{(a)}$ where $\mathbf{Y}^{(a)}$ is given by \eqref{eq:920211} with $\psi_0=\psi^{(a)}_0=a$. It is clear fom \eqref{eq:920211} that $\mathbf{Y}^{(a)}\mathbf{Y}^{(1/a)}=1$. Thus,
\[{{\psi }^{(a)}}{{\psi }^{(1/a)}}=\mathbb{E}{{\mathbf{Y}}^{(a)}}\mathbb{E}{{\mathbf{Y}}^{(1/a)}}\ge \left({\mathbb{E}\sqrt{{{\mathbf{Y}}^{(a)}}{{\mathbf{Y}}^{(1/a)}}}}\right)^{2}=1.\]
For $a>1$, we have $1/a<1$ and \autoref{decay} (b) implies $\psi^{(1/a)}\le C_ae^{-\kappa|\xi|\sqrt{t}}$ where $\kappa=\min\{1,\frac{4\ln a}{3(1+\ln 2)}\}$. Therefore,
\[\mathscr{F}\{{u}^{(a)}\}=\frac{h{{\psi }^{(a)}}}{{{c}_{0}}}\ge \frac{h}{{{c}_{0}}{{\psi }^{(1/a)}}}\ge C_a|\xi |^{-1}{{e}^{|\xi |(\kappa \sqrt{t}-1)}}.\]
In particular, $\mathscr{F}\{u^{(a)}\}\not\in L^1((0,\kappa^{-2}),L^2(\mathbb{R}^3))$, which implies $u^{(a)}\not\in L^1((0,\kappa^{-2}),L^2(\mathbb{R}^3))$ by Plancherel theorem. Now let $T^*\le\infty$ be the maximal time of existence of $u^{(a)}$. For any $0<t\le T<T^*$, we have
\begin{eqnarray*}
{{\left\| {{u}^{(a)}}(t) \right\|}_{{{L}^{5/2}}}}&\le& {{\left\| {{e}^{t\Delta }}u_{0}^{(a)} \right\|}_{{{L}^{5/2}}}}+\int_{0}^{t}{{{\left\| G(t-s)*{{u}^{(a)}}{{(s)}^{2}} \right\|}_{{{L}^{5/2}}}}ds}\\
&\lesssim& {{\left\| u_{0}^{(a)} \right\|}_{{{L}^{5/2}}}}+\int_{0}^{t}{\frac{1}{{{(t-s)}^{1/2}}}\left\| {{u}^{(a)}}(s) \right\|_{{{L}^{5}}}^{2}ds}\\
&\le& {{\left\| u_{0}^{(a)} \right\|}_{{{L}^{5/2}}}}+{{\left( \int_{0}^{t}{\frac{1}{{{(t-s)}^{5/6}}}ds} \right)}^{3/5}}\left\| {{u}^{(a)}} \right\|_{{{L}^{5}}({{\mathbb{R}}^{3}}\times (0,T))}^{2}.
\end{eqnarray*}
This implies $u^{(a)}\in L^\infty((0,T),L^{5/2}(\mathbb{R}^3))$ for any $T<T^*$. By \eqref{gestimate} and Young's inequality for convolution,
\begin{eqnarray*}
{{\left\| {{u}^{(a)}}(t) \right\|}_{{{L}^{2}}}}&\le& {{\left\| u_{0}^{(a)} \right\|}_{{{L}^{2}}}}+\int_{0}^{t}{{{\left\| G(t-s) \right\|}_{{{L}^{10/7}}}}\left\| {{u}^{(a)}}(s) \right\|_{{{L}^{5/2}}}^{2}ds}\\
&\lesssim& {{\left\| u_{0}^{(a)} \right\|}_{{{L}^{2}}}}+\left\| {{u}^{(a)}} \right\|_{{{L}^{\infty }}((0,T),{{L}^{5/2}})}^{2}\int_{0}^{t}{\frac{1}{{{(t-s)}^{19/20}}}ds}.
\end{eqnarray*}
As a consequence, $u^{(a)}\in L^\infty((0,T),L^{2}(\mathbb{R}^3))$ for any $T<T^*$. Combining this with the fact that $u^{(a)}\not\in L^1((0,\kappa^{-2}),L^2(\mathbb{R}^3))$, we conclude that $T^*\le \kappa^{-2}$.
\end{proof}
Note that a function on $\mathbb{R}^d$ is real-valued only if its Fourier transform is conjugate even. Thus, for an initial data $u_0$ to be real-valued, it is necessary that its Fourier transform is supported on a symmetric region about the origin. Montgomery-Smith constructed a real-valued function $u_0^*$ %such that $\mathcal{F}\{u_0^*\}$ 
whose Fourier transform is nonnegative and compactly supported in a symmetric region about the origin such that the mild solution $u$ to \eqref{MS} with initial data $au_0^*$, for sufficiently large $a>0$, fails to be in any Triebel-Lizorkin or Besov spaces after some finite time $t_0$ \cite[Thm.\ 1]{smith}. His proof in fact shows a stronger result, namely $u(\cdot,t_0)\not\in \dot{B}^\alpha_{-\infty,\infty}$ for any $\alpha\in\mathbb{R}$. 
%This result together with the comparison principle (\autoref{69211} (e)) is already sufficient to show that for sufficiently large $a$, the mild solution $u^{(a)}$ to \eqref{MS} with initial condition $u^{(a)}_0$ satisfies $u^{(a)}(\cdot,t_0)\not\in \dot{B}^\alpha_{-\infty,\infty}$ for any $\alpha\in\mathbb{R}$. 
Below we give an alternate explanation of this result from a stochastic cascade perspective. Our method is to derive lower estimates on the probabilities of horizon crossing similar to \eqref{probability}. 

We first recall the definition of the homogeneous Besov space $\dot{B}^\alpha_{-\infty,\infty}$. Let $\phi:\mathbb{R}^d\to\mathbb{R}$ be a Schwartz function whose Fourier transform takes values in the interval $[0,1]$, is supported in the shell $A=\{1/4\le|\xi|\le 1\}$, and satisfies \[\sum_{j\in\mathbb{Z}}\mathscr{F}\{\phi\}(2^{j}\xi)=1\ \ \ \forall \xi\in\mathbb{R}^d\backslash\{0\}.\]
Put $\phi_j(x)=2^{dj}\phi(2^{j}x)$. Then the space $\dot{B}^\alpha_{-\infty,\infty}$ is defined as
\[\dot{B}_{-\infty ,\infty }^{\alpha }({{\mathbb{R}}^{d}})=\left\{ f\in L_{\text{loc}}^{1}({{\mathbb{R}}^{d}}):\ {{\left\| f \right\|}_{\dot{B}_{-\infty ,\infty }^{\alpha }}}=\underset{j\in \mathbb{Z}}{\mathop{\sup }}\,{{2}^{\alpha j}}{{\left\| {{\phi }_{j}}*f \right\|}_{{{L}^{\infty }}}}<\infty  \right\}.\]
It is worth noting that the choice $A=\{1/4\le|\xi|\le 1\}$ is not important in the definition of $\dot{B}^\alpha_{-\infty,\infty}$. One can replace $A$ by any shell centered at the origin and still obtains an equivalent definition.
\begin{prop}
Let $u_0:\mathbb{R}^d\to\mathbb{R}$, $d\ge 3$, be a function 
%such that $\mathcal{F}\{u_0\}$ 
whose Fourier transform is real-valued, nonnegative on $\mathbb{R}^d\backslash\{0\}$, and bounded away from zero on some nonempty open subset of $\mathbb{R}^d$. Then there exists $a_0>0$ such that for any $a\ge a_0$, the Cauchy problem {\eqref{MS}} with initial data $au_0$ satisfies $u(\cdot,1)\not\in \dot{B}^\alpha_{-\infty,\infty}$ for any $\alpha\in\mathbb{R}$.
\end{prop}
\begin{proof}
Without loss of generality, one may suppose that $\mathscr{F}\{u_0\}(\xi)\ge 1$ for all $\xi\in D=D_0\cup(-D_0)$ where $D_0=2e_1+B_1$. Here $B_r$ denotes the ball of radius $r$ centered at the origin and $e_1=(1,0,...,0)\in\mathbb{R}^d$. We consider the stochastic cascade associate with the kernel $h(\xi)=c_d|\xi|^{1-d}$. 
%where 
%\[c_d=\frac{\Gamma {{\left( \frac{d-1}{2} \right)}^{2}}}{\Gamma \left( \frac{d-2}{2} \right){{\pi }^{(d+2)/2}}}.\]
Denote 
%by $q_n$'s be the probabilities defined by \eqref{probability}.
%\begin{equation*}%\label{probability2}
$q_n(\xi,t)=\mathbb{P}_{\xi}(\zeta_\xi>t,\ \text{exactly~}n\text{~paths~cross~the~horizon~}t,\ \text{all~cross~the~horizon~on~}D)$. To be precise, that all paths cross the horizon $t$ on $D$ means $W_v\in D$ for all $v\in V(\xi,t)$ where $V(\xi,t)$ is the set of vertices defined by \eqref{Vxit}. Let $\psi_0=c_0\mathscr{F}\{au_0\}/h$ and $\psi$ be the cascade solution to \eqref{nFMS}$_{\psi_0}$. Then
\begin{equation}\label{eq:928212}\psi (\xi ,t)\ge \sum\limits_{n=1}^{\infty }{{{a}^{n}}{{q}_{n}}(\xi ,t)}.\end{equation}
%\end{equation*}
We have $q_1(\xi,t)=e^{-|\xi|^2t}\mathbbm{1}_D(\xi)$. By conditioning on the first time of branching, one gets
\begin{equation}\label{probrecursion2}
{{q}_{n}}(\xi ,t)=\int_{0}^{t}{|\xi {{|}^{2}}{{e}^{-s|\xi {{|}^{2}}}}\int_{{{\mathbb{R}}^{d}}}{\sum\limits_{k=1}^{n-1}{{{q}_{k}}(\eta ,t-s){{q}_{n-k}}(\xi -\eta ,t-s)H(\eta |\xi )d\eta ds}}}.\end{equation}
We first show that the number
\[{{\alpha }_{d}}:={{\left( \frac{1}{4} \right)}^{2}}\min \left\{ \underset{\xi \in A}{\mathop{\inf }}\,\int_{D\cap (\xi -D)}{H(\eta |\xi )d\eta },\ \ \underset{\xi \in A}{\mathop{\inf }}\,\int_{A\cap (\xi -A)}{H(\eta |\xi )d\eta } \right\}\]
is positive.
%Put \[{{q}_{n}}(t)=\underset{\xi\in A}{\mathop{\inf }}\,{{p}_{n}}(\xi ,t).\]
Note that $h(\xi)=\tilde{h}(|\xi|)$ where $\tilde{h}$ is a decreasing function. For $\xi\in A$ and $\eta,\xi-\eta\in A\cup D$, we have
\[h(\eta),h(\xi-\eta)\ge \tilde{h}(3),\ \ h(\xi)\le\tilde{h}(1/4),\ \ H(\eta |\xi )=\frac{h(\eta )h(\xi -\eta )}{|\xi |h(\xi )}\ge \frac{\tilde{h}(3)^2}{\tilde{h}(1/4)}.\] 
%For $\xi\in A$, $\eta,\xi-\eta\in D$, 
%\[h(\eta),h(\xi-\eta)\ge \tilde{h}(3),\ \ h(\xi)\le\tilde{h}(1/4),\ \ H(\eta |\xi )=\frac{h(\eta )h(\xi -\eta )}{|\xi |h(\xi )}\ge \frac{\tilde{h}(1)^2}{\tilde{h}(1/4)}.\] 
%By the recursion \eqref{probrecursion}, we have
%\begin{eqnarray*}
%{{q}_{n}}(t)&\ge& \int_{0}^{t}{{{\left( \frac{1}{4} \right)}^{2}}{{e}^{-s}}\sum\limits_{k=1}^{n}{\int_{A\cap (\xi -A)}{{{q}_{k}}(s){{q}_{n-k}}(s)\left( \frac{1}{8\pi }{{e}^{-7/4}} \right)}ds}}\\
%&=&\frac{{{e}^{-7/4}}}{128\pi }m(A\cap (\xi -A))\int_{0}^{t}{{{e}^{-(t-s)}}\sum\limits_{k=1}^{n}{{{q}_{k}}(s){{q}_{n-k}}(s)ds}}.
%\end{eqnarray*}
%\[{{q}_{n}}(t)\ge \alpha \int_{0}^{t}{{{e}^{-(t-s)}}\sum\limits_{k=1}^{n}{{{q}_{k}}(s){{q}_{n-k}}(s)ds}}\]
%Denote by $e_1=(1,0,0)\in\mathbb{R}^3$. 
For $\xi\in A$, a lower bound on the Lebesgue measure of $A\cap (\xi -A)$ is obtained by the observation that (see \autoref{balls}):
\begin{figure}[h]
\centering
\includegraphics[scale=.65]{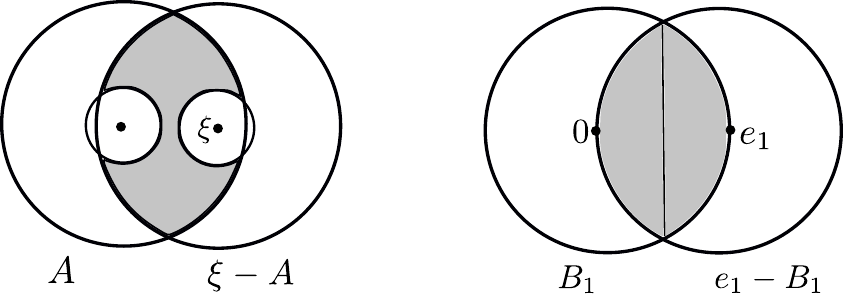}
\caption{Estimate of $m(A\cap (\xi -A))$.}
\label{balls}
\end{figure}
\begin{figure}[h]
\centering
\includegraphics[scale=.8]{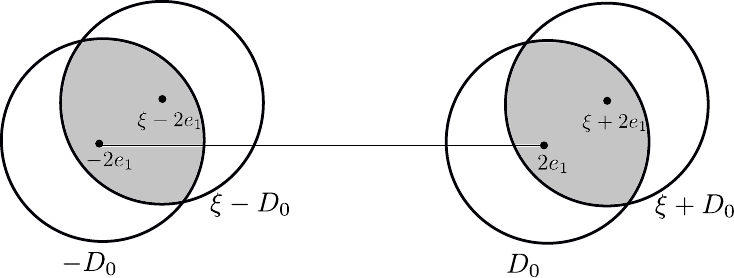}
\caption{Estimate of $m(D\cap (\xi -D))$.}
\label{balls2}
\end{figure}
\begin{eqnarray*}
m(A\cap (\xi -A))&\ge& m({{B}_{1}}\cap ({{e}_{1}}-{{B}_{1}}))-2m({{B}_{1/4}})\\
&=&2m(\{x=({{x}_{1}},...,{{x}_{d}})\in B_1:\ {{x}_{1}}\ge 1/2\})-2m({{B}_{1/4}})\\
%&=&2\int_{1/2}^{1}{\pi (1-x_{1}^{2})d{{x}_{1}}}-2\frac{4}{3}\pi {{\left( \frac{1}{4} \right)}^{3}}=\frac{3\pi }{8}.
&>&0.
\end{eqnarray*}
For $\xi\in A$, a lower bound on the Lebesgue measure of $D\cap (\xi -D)$ is obtained by the observation that (see \autoref{balls2}):
\[m(D\cap (\xi -D))=2m({{D}_{0}}\cap (\xi -{{D}_{0}}))\ge 4m(\{x=({{x}_{1}},...,{{x}_{d}})\in {{B}_{1}}:\ {{x}_{1}}\ge 1/2\})>0.\]
Therefore, 
\[{{\alpha }_{d}}\ge {{\left( \frac{1}{4} \right)}^{2}}\frac{\tilde{h}{{(1)}^{2}}}{\tilde{h}(1/4)}\min \left\{ \underset{\xi \in A}{\mathop{\inf }}\,m(D\cap (\xi -D)),\ \ \underset{\xi \in A}{\mathop{\inf }}\,m(A\cap (\xi -A)) \right\}>0.\]
Next, we show by induction that 
\begin{equation}\label{eq:928211}\tilde{q}_k(t):=\inf_{\xi\in A}q_{2^k}(\xi,t)\ge 2^{k+2}(\alpha_dt)^{2^k-1}(4e^t)^{-2^k} \ \ \forall\, k\in\mathbb{N},\,t>0.\end{equation}
For any $\xi\in A$,
\begin{eqnarray*}
{{q}_{2}}(\xi ,t)&=&\int_{0}^{t}{{{\left( \frac{1}{4} \right)}^{2}}{{e}^{-s}}\int_{{{\mathbb{R}}^{d}}}{{{q}_{1}}(\eta ,t-s){{q}_{1}}(\xi -\eta ,t-s)H(\eta |\xi )d\eta ds}}\\
&\ge& \int_{0}^{t}{{{\left( \frac{1}{4} \right)}^{2}}{{e}^{-s}}\int_{D\cap (\xi -D)}{{{e}^{-(t-s)}}{{e}^{-(t-s)}}H(\eta |\xi )d\eta ds}}\\
&\ge& {{\alpha }_{d}}\int_{0}^{t}{{{e}^{-s}}{{e}^{-2(t-s)}}ds}\ge {{\alpha }_{d}}t{{e}^{-2t}}.
\end{eqnarray*}
Hence, \eqref{eq:928211} is true for $k=1$. Suppose \eqref{eq:928211} is true for $k\ge 1$. We show that it is also true for $k+1$.
\begin{eqnarray*}{{q}_{{{2}^{k+1}}}}(\xi ,t)&\ge& \int_{0}^{t}{{{\left( \frac{1}{4} \right)}^{2}}{{e}^{-(t-s)}}\int_{{{\mathbb{R}}^{d}}}{{{q}_{{{2}^{k}}}}(\eta ,s){{q}_{{{2}^{k}}}}(\xi -\eta ,s)H(\eta |\xi )d\eta ds}}\\
&\ge& \int_{0}^{t}{{{\left( \frac{1}{4} \right)}^{2}}{{e}^{-(t-s)}}\int_{{{\mathbb{R}}^{d}}}{{{2}^{2(k+2)}}{{({{\alpha }_{d}}s)}^{{{2}^{k+1}}-2}}{{(4{{e}^{s}})}^{-{{2}^{k+1}}}}H(\eta |\xi )d\eta ds}}\\
&\ge& {{\alpha }_{d}}\alpha _{d}^{{{2}^{k+1}}-2}{{2}^{2k+4}}{{4}^{-{{2}^{k+1}}}}{{e}^{-t}}\int_{0}^{t}{{{s}^{{{2}^{k+1}}-2}}{{e}^{(1-{{2}^{k+1}})s}}ds}\\
&\ge& {{2}^{k+3}}{{({{\alpha }_{d}}t)}^{{{2}^{k+1}}-1}}{{(4e^t)}^{-{{2}^{k+1}}}}.
\end{eqnarray*}
Therefore, \eqref{eq:928211} is true for $k+1$. Applying this estimate to \eqref{eq:928212}, one gets

%\[{{q}_{n}}(t)\ge \beta \int_{0}^{t}{{{e}^{-(t-s)}}\sum\limits_{k=1}^{n}{{{q}_{k}}(s){{q}_{n-k}}(s)ds}}\]
%where $\beta =\frac{3{{e}^{-7/4}}}{1024\pi }$. From here, one can show by induction on $n$ that $q_n(t)\ge \beta^{n-1}t^{n-1}e^{-nt}$ for all $n\in\mathbb{N}$ and $t>0$. Let $\psi^{(a)}$ be the cascade solution to \eqref{nFMS}$_a$. For any $\xi\in A$, we have
%\[{{\psi }^{(a)}}(\xi ,t)=\sum\limits_{n=1}^{\infty }{{{a}^{n}}{{p}_{n}}(\xi ,t)}\ge \sum\limits_{n=1}^{\infty }{{{a}^{n}}{{q}_{n}}(t)}\ge a{{e}^{-t}}\sum\limits_{n=1}^{\infty }{{{\left( a\beta t{{e}^{-t}} \right)}^{n-1}}},\]
\begin{eqnarray*}
\psi (\xi ,t)\ge \sum\limits_{k=1}^{\infty }{{{a}^{{{2}^{k}}}}{{q}_{{{2}^{k}}}}(\xi ,t)}\ge \sum\limits_{k=1}^{\infty }{{{a}^{{{2}^{k}}}}{{2}^{k+2}}{{({{\alpha }_{d}}t)}^{{{2}^{k}}-1}}{{(4{{e}^{t}})}^{-{{2}^{k}}}}}\gtrsim \sum\limits_{k=1}^{\infty }{a{{2}^{k+2}}{{(4a{{\alpha }_{d}}t{{e}^{-t}})}^{{{2}^{k}}-1}}}.
\end{eqnarray*}
which is a convergent series when $a\ge a_0:=e/(4\alpha_d)$ and $t=1$. Suppose by contradiction that $u\in\dot{B}^\alpha_{-\infty,\infty}$ for some $\alpha\in\mathbb{R}$. Then
\[{{\left\| {{u}}(\cdot,1) \right\|}_{B_{-\infty ,\infty }^{\alpha }}}\ge {{\left\| {{\phi }_{0}}*{{u}}(\cdot,1) \right\|}_{{{L}^{\infty }}}}={{\left\| \mathscr{F}\{{{\phi }_{0}}\}\mathscr{F}\{{{u}}(\cdot,1)\} \right\|}_{{{L}^{1}}}}={{\left\| \mathscr{F}\{{{\phi }_{0}}\}\frac{h{{\psi }}(\cdot,1)}{{{c}_{0}}} \right\|}_{{{L}^{1}}}}=\infty. \]
This is a contradiction.
\end{proof}
\section*{Appendix}\label{appendix}
%Now let us revisit \autoref{94191}. It is a domination principle stated for cascade solutions. We have seen that cascade solutions are essentially mild solutions. Thus, it is reasonable to restate the domination principle in terms of mild solutions as follows.
We now present an analytic proof for \autoref{614211}. For simplicity, we will only give the proof of a slightly weaker version of \autoref{614211}, which already contains the key technique. 
\begin{prop}\label{97191}
Let $\psi_0:\mathbb{R}^d\to[0,\infty]$ be a measurable function and $f\in S$, where $S$ is the set given by \eqref{eq:73211}. Let $\psi$ be the solution to {\eqref{nFMS}}$_{\psi_0}$ and $\phi$ be the solution to {\eqref{nFMS}}$_{\phi_0}$, where $\phi_0=f(\psi_0)$. Then $\psi\le f^{-1}(\phi)$.
\end{prop}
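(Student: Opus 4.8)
The plan is to compare the Picard iterations of the two equations term by term. Write $\psi^{(n)}$ and $\phi^{(n)}$ for the iterates defined by \eqref{eq:71217} with initial data $\psi_0$ and $\phi_0=f(\psi_0)$ respectively. Since every integrand occurring in $\widetilde{\mathfrak{B}}$ is nonnegative and $\psi^{(0)}=\phi^{(0)}=0$, both sequences are nondecreasing in $n$ and converge pointwise in $[0,\infty]$ to the respective minimal cascade solutions $\psi$ and $\phi$ (this is \autoref{69211}(a), whose argument is unchanged when $|\chi_0|$ is replaced by an arbitrary nonnegative $\psi_0$). It therefore suffices to prove, by induction on $n$, that $f(\psi^{(n)})\le\phi^{(n)}$ everywhere on $\mathbb{R}^d\times(0,\infty)$; letting $n\to\infty$ and invoking the continuity of $f$ then yields $f(\psi)\le\phi$, hence $\psi\le f^{-1}(\phi)$.

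The base case $n=0$ holds because $f(0)=0$. For the inductive step, the crucial point is that the right-hand side of \textup{(nFMS)}$_{\psi_0}$ exhibits $\psi^{(n+1)}(\xi,t)$ as a genuine \emph{average}. Indeed, since
\[
e^{-t|\xi|^2}+\int_0^t|\xi|^2e^{-s|\xi|^2}\,ds=1
\qquad\text{and}\qquad
\int_{\mathbb{R}^d}H(\eta\,|\,\xi)\,d\eta=1,
\]
the measure $\nu_{\xi,t}$ that assigns mass $e^{-t|\xi|^2}$ to one auxiliary point $\ast$ and coincides with $|\xi|^2e^{-s|\xi|^2}H(\eta\,|\,\xi)\,ds\,d\eta$ on $(0,t)\times\mathbb{R}^d$ is a probability measure, and $\psi^{(n+1)}(\xi,t)=\int G_n\,d\nu_{\xi,t}$, where $G_n(\ast)=\psi_0(\xi)$ and $G_n(s,\eta)=\psi^{(n)}(\eta,t-s)\,\psi^{(n)}(\xi-\eta,t-s)$. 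Applying Jensen's inequality (convexity of $f$), then the submultiplicativity $f(ab)\le f(a)f(b)$ on the bilinear part, then the induction hypothesis $f(\psi^{(n)})\le\phi^{(n)}$ together with the nonnegativity of the integrands, and finally the identity $f(\psi_0)=\phi_0$ on the linear part, one obtains precisely $f(\psi^{(n+1)}(\xi,t))\le\phi^{(n+1)}(\xi,t)$. All of this is carried out with values in $[0,\infty]$ and the usual conventions ($0\cdot\infty=0$, $f(\infty)=\infty$); the linear lower bound for $f$ coming from convexity (property (iii) of $S$) guarantees that Jensen still applies when the defining integral diverges, and where $\phi(\xi,t)=\infty$ the final inequality $\psi\le f^{-1}(\phi)$ is vacuous anyway.

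Finally, let $n\to\infty$. By the properties of $S$ recorded in the remark following \eqref{eq:73211}, $f$ is continuous and strictly increasing on $[0,\infty)$ with $f(x)\to\infty$ as $x\to\infty$, so it extends to a continuous increasing bijection of $[0,\infty]$ whose inverse $f^{-1}$ is again continuous and increasing. Passing to the limit in $f(\psi^{(n)})\le\phi^{(n)}$ gives $f(\psi)\le\phi$, and applying $f^{-1}$ gives $\psi\le f^{-1}(\phi)$.

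The step I expect to carry the actual content is the inductive step, and within it the observation that one must \emph{not} try to push $f$ through the decomposition ``linear term plus bilinear term'': a convex function vanishing at $0$ is \emph{super}additive, so $f(a+b)\ge f(a)+f(b)$, which points the wrong way. The resolution is that the heat weight $e^{-s|\xi|^2}$ and the kernel $H(\cdot\,|\,\xi)$ conspire so that the whole right-hand side is an integral against a \emph{probability} measure — exactly the hypothesis under which Jensen's inequality turns convexity into the bound we want — after which submultiplicativity disposes of the product inside the bilinear term and the rest is bookkeeping. A secondary, purely technical nuisance is propagating the possibility of the value $+\infty$ through the induction and the limit, which the linear growth of $f$ renders harmless.
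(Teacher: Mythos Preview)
Your proof is correct and follows essentially the same route as the paper's: both prove $f(\psi^{(n)})\le\phi^{(n)}$ by induction, the key being to recognize that the right-hand side of the iteration is an integral against a probability measure so that Jensen's inequality applies, followed by submultiplicativity of $f$ and the induction hypothesis. The only cosmetic difference is that the paper encodes the point mass via the indicator $\mathbbm{1}_{s\ge t}$ on $(0,\infty)\times\mathbb{R}^d$ rather than introducing an auxiliary atom $\ast$, and it reads the chain of inequalities from $\phi^{(n+1)}$ downward rather than from $f(\psi^{(n+1)})$ upward.
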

%We give a proof based on fixed point method. It follows from Jensen's inequality although not as obviously as the probabilistic proof of \autoref{94191}.
\begin{proof}
Recall that \eqref{nFMS}$_{\psi_0}$ has a solution $\psi:\mathbb{R}^d\times(0,\infty)\to[0,\infty]$ given by the pointwise limit of the nondecreasing sequence $\{\psi^{(n)}\}$:
\begin{equation}\label{eq:710211}
{{\psi}^{(0)}}=0,\ \ \ {{\psi}^{(n+1)}}=U(\psi_0)+\widetilde{\mathfrak{B}}({{\psi}^{(n)}},{{\psi}^{(n)}}),
\end{equation}
where $\widetilde{\mathfrak{B}}$ is given by \eqref{eq:71216}.
Let 
%$\{\psi^{(n)}\}$ be the sequence given by \eqref{eq:710211} and 
$\{\phi^{(n)}\}$ be a sequence given by ${{\phi}^{(0)}}=0$, ${{\phi}^{(n+1)}}=U(\phi_0)+\widetilde{\mathfrak{B}}({{\phi}^{(n)}},{{\psi}^{(n)}})$. %Consider two Picard's iterations 
%\begin{eqnarray*}
%\phi_0\equiv 0,~~~ \phi_n=F[\phi_{n-1},\chi_0],\\
%\psi_0\equiv 0, ~~~\psi_n=F[\psi_{n-1},\chi_0^\alpha].
%\end{eqnarray*}
%Note that $\phi_n$ and $\psi_n$ are nondecreasing sequences (\autoref{95191}). 
Since $f$ is continuous and strictly increasing, it suffices to show that 
\begin{equation}\label{eq:710212}
\phi^{(n)}\ge f(\psi^{(n)})  \ \ \forall\, n\ge 0.
\end{equation}
We show by induction in $n$. The base case $n=0$ is obvious. Suppose \eqref{eq:710212} holds for some $n\ge 0$. Because $\int H(\eta|\xi)d\eta=1$,
\begin{eqnarray*}{{\phi }^{(n+1)}}(\xi ,t)={{e}^{-t|\xi {{|}^{2}}}}f(\psi_0(\xi))+\int_{0}^{t}{|\xi {{|}^{2}}{{e}^{-s|\xi {{|}^{2}}}}\int_{{{\mathbb{R}}^{d}}}{{{\phi }^{(n)}}(\eta ,t-s){{\phi }^{(n)}}(\xi -\eta ,t-s)H(\eta |\xi )d\eta }ds}\\
=\int_{0}^{\infty }{\int_{{{\mathbb{R}}^{d}}}{g(\eta,s)\left\{ f(\psi_0(\xi)){\mathbbm{1}_{s\ge t}}+{{\phi }^{(n)}}(\eta ,t-s){{\phi }^{(n)}}(\xi -\eta ,t-s){\mathbbm{1}_{s< t}} \right\}d\eta ds}}.
\end{eqnarray*}
where $g(\eta,s)=|\xi {{|}^{2}}{{e}^{-s|\xi {{|}^{2}}}}H(\eta |\xi )$. By the induction hypothesis and the submultiplicative property of $f$,
\begin{eqnarray*}
\phi^{(n+1)}(\xi,t)&\ge& \int_{0}^{\infty }{\int_{{{\mathbb{R}}^{d}}}{g(\eta,s)\left\{ f(\psi_0(\xi)){\mathbbm{1}_{s\ge t}}+f({{\psi }^{(n)}}(\eta ,t-s))f({{\psi }^{(n)}}(\xi-\eta ,t-s)){\mathbbm{1}_{s< t}} \right\}d\eta ds}}\\
&\ge&\int_{0}^{\infty }{\int_{{{\mathbb{R}}^{d}}}{g(\eta,s){{f\left( {{\psi }_{0}}(\xi ){\mathbbm{1}_{s\ge t}}+{{\psi }^{(n)}}(\eta ,t-s){{\psi }^{(n)}}(\xi -\eta ,t-s){\mathbbm{1}_{s< t}} \right)}}d\eta ds}}.
\end{eqnarray*}
Because $g(\eta,s)d\eta ds$ is a probability measure and $f$ is convex, one can apply Jensen's inequality:
\begin{eqnarray*}
{{\phi }^{(n+1)}}(\xi ,t)&\ge& f\left( \int_{0}^{\infty }{\int_{{{\mathbb{R}}^{d}}}{g(\eta ,s)({{\psi }_{0}}(\xi ){\mathbbm{1}_{s\ge t}}+{{\psi }^{(n)}}(\eta ,t-s){{\psi }^{(n)}}(\xi -\eta ,t-s){\mathbbm{1}_{s< t}})d\eta ds}} \right )\\
&=&f(\psi^{(n+1)}(\xi,t)).
\end{eqnarray*}
%Therefore, $\psi_n\ge\phi_n^\alpha$ for all $n$. By taking the limit as $n\to\infty$, we get $\psi\ge\phi^\alpha$.
\end{proof}
\section{Acknowledgments}
A part of this work was conducted while TP was a postdoctoral fellow at Oregon State University.  The authors would like to thank our colleagues Christopher Orum and Edward Waymire for many stimulating conversations that motivated this research and clarified the exposition.  

%\printbibliography
\bibliography{References}
\end{document}